\newcommand{\eqnum}{\refstepcounter{equation}\textup{\tagform@{\theequation}}}
\makeatletter \@addtoreset{equation}{section} \makeatother
\renewcommand{\theequation}{\thesection.\arabic{equation}}
\newtheorem{defn}[equation]{Definition}
\newtheorem{thm}[equation]{Theorem}
\newtheorem*{thm*}{Theorem}
\newtheorem{cor}[equation]{Corollary}
\newtheorem{prop}[equation]{Proposition}
\newtheorem*{defthm*}{Definition/Theorem}
\theoremstyle{definition}
\newtheorem{rmk}[equation]{Remark}
\newtheorem{exam}[equation]{Example}
\newtheorem{constr}[equation]{Construction}
\newtheorem{varnt}[equation]{Variant}
\newtheorem*{exam*}{Example}
\newcommand\arXiv[1]{\href{http://arxiv.org/abs/#1}{arXiv:#1}}
\newcommand{\changelocaltocdepth}[1]{%
  \addtocontents{toc}{\protect\setcounter{tocdepth}{#1}}%
  \setcounter{tocdepth}{#1}}
\newcommand{\nc}{\newcommand}
\nc{\renc}{\renewcommand}
\nc{\ssec}{\subsection}
\nc{\sssec}{\subsubsection}
\nc{\on}{\operatorname}
\nc{\term}[1]{#1\xspace}
\tikzset{
  commutative diagrams/.cd,
  arrow style=tikz,
  diagrams={>=latex}}
\tikzset{
  column sep/.code=\def\pgfmatrixcolumnsep{\pgf@matrix@xscale*(#1)},
  row sep/.code   =\def\pgfmatrixrowsep{\pgf@matrix@yscale*(#1)},
  matrix xscale/.code=%
    \pgfmathsetmacro\pgf@matrix@xscale{\pgf@matrix@xscale*(#1)},
  matrix yscale/.code=%
    \pgfmathsetmacro\pgf@matrix@yscale{\pgf@matrix@yscale*(#1)},
  matrix scale/.style={/tikz/matrix xscale={#1},/tikz/matrix yscale={#1}}}
\def\pgf@matrix@xscale{1}
\def\pgf@matrix@yscale{1}
\setlist[enumerate,1]{label={(\alph*)},itemsep=\parskip,leftmargin=0pt}
\newlist{thmlist}{enumerate}{1}
\setlist[thmlist,1]{label={\em(\roman*)},ref={\upshape{(\roman*)}},itemsep=\parskip,leftmargin=0pt}     % perhaps: ref={\upshape{(\roman*)}} to force upshape in references
\newlist{defnlist}{enumerate}{1}
\setlist[defnlist,1]{label={(\roman*)},itemsep=\parskip,leftmargin=0pt}
\newlist{inlinelist}{enumerate*}{1}
\setlist[inlinelist,1]{label={(\alph*)}}
\nc{\sA}{\ensuremath{\mathcal{A}}\xspace}
\nc{\sB}{\ensuremath{\mathcal{B}}\xspace}
\nc{\sC}{\ensuremath{\mathcal{C}}\xspace}
\nc{\sD}{\ensuremath{\mathcal{D}}\xspace}
\nc{\sE}{\ensuremath{\mathcal{E}}\xspace}
\nc{\sF}{\ensuremath{\mathcal{F}}\xspace}
\nc{\sG}{\ensuremath{\mathcal{G}}\xspace}
\nc{\sH}{\ensuremath{\mathcal{H}}\xspace}
\nc{\sI}{\ensuremath{\mathcal{I}}\xspace}
\nc{\sJ}{\ensuremath{\mathcal{J}}\xspace}
\nc{\sK}{\ensuremath{\mathcal{K}}\xspace}
\nc{\sL}{\ensuremath{\mathcal{L}}\xspace}
\nc{\sM}{\ensuremath{\mathcal{M}}\xspace}
\nc{\sN}{\ensuremath{\mathcal{N}}\xspace}
\nc{\sO}{\ensuremath{\mathcal{O}}\xspace}
\nc{\sP}{\ensuremath{\mathcal{P}}\xspace}
\nc{\sQ}{\ensuremath{\mathcal{Q}}\xspace}
\nc{\sR}{\ensuremath{\mathcal{R}}\xspace}
\nc{\sS}{\ensuremath{\mathcal{S}}\xspace}
\nc{\sT}{\ensuremath{\mathcal{T}}\xspace}
\nc{\sU}{\ensuremath{\mathcal{U}}\xspace}
\nc{\sV}{\ensuremath{\mathcal{V}}\xspace}
\nc{\sW}{\ensuremath{\mathcal{W}}\xspace}
\nc{\sX}{\ensuremath{\mathcal{X}}\xspace}
\nc{\sY}{\ensuremath{\mathcal{Y}}\xspace}
\nc{\sZ}{\ensuremath{\mathcal{Z}}\xspace}
\nc{\bA}{\ensuremath{\mathbf{A}}\xspace}
\nc{\bB}{\ensuremath{\mathbf{B}}\xspace}
\nc{\bC}{\ensuremath{\mathbf{C}}\xspace}
\nc{\bD}{\ensuremath{\mathbf{D}}\xspace}
\nc{\bE}{\ensuremath{\mathbf{E}}\xspace}
\nc{\bF}{\ensuremath{\mathbf{F}}\xspace}
\nc{\bG}{\ensuremath{\mathbf{G}}\xspace}
\nc{\bH}{\ensuremath{\mathbf{H}}\xspace}
\nc{\bI}{\ensuremath{\mathbf{I}}\xspace}
\nc{\bJ}{\ensuremath{\mathbf{J}}\xspace}
\nc{\bK}{\ensuremath{\mathbf{K}}\xspace}
\nc{\bL}{\ensuremath{\mathbf{L}}\xspace}
\nc{\bM}{\ensuremath{\mathbf{M}}\xspace}
\nc{\bN}{\ensuremath{\mathbf{N}}\xspace}
\nc{\bO}{\ensuremath{\mathbf{O}}\xspace}
\nc{\bP}{\ensuremath{\mathbf{P}}\xspace}
\nc{\bQ}{\ensuremath{\mathbf{Q}}\xspace}
\nc{\bR}{\ensuremath{\mathbf{R}}\xspace}
\nc{\bS}{\ensuremath{\mathbf{S}}\xspace}
\nc{\bT}{\ensuremath{\mathbf{T}}\xspace}
\nc{\bU}{\ensuremath{\mathbf{U}}\xspace}
\nc{\bV}{\ensuremath{\mathbf{V}}\xspace}
\nc{\bW}{\ensuremath{\mathbf{W}}\xspace}
\nc{\bX}{\ensuremath{\mathbf{X}}\xspace}
\nc{\bY}{\ensuremath{\mathbf{Y}}\xspace}
\nc{\bZ}{\ensuremath{\mathbf{Z}}\xspace}
\nc{\bbA}{\ensuremath{\mathbb{A}}\xspace}
\nc{\bbB}{\ensuremath{\mathbb{B}}\xspace}
\nc{\bbC}{\ensuremath{\mathbb{C}}\xspace}
\nc{\bbD}{\ensuremath{\mathbb{D}}\xspace}
\nc{\bbE}{\ensuremath{\mathbb{E}}\xspace}
\nc{\bbF}{\ensuremath{\mathbb{F}}\xspace}
\nc{\bbG}{\ensuremath{\mathbb{G}}\xspace}
\nc{\bbH}{\ensuremath{\mathbb{H}}\xspace}
\nc{\bbI}{\ensuremath{\mathbb{I}}\xspace}
\nc{\bbJ}{\ensuremath{\mathbb{J}}\xspace}
\nc{\bbK}{\ensuremath{\mathbb{K}}\xspace}
\nc{\bbL}{\ensuremath{\mathbb{L}}\xspace}
\nc{\bbM}{\ensuremath{\mathbb{M}}\xspace}
\nc{\bbN}{\ensuremath{\mathbb{N}}\xspace}
\nc{\bbO}{\ensuremath{\mathbb{O}}\xspace}
\nc{\bbP}{\ensuremath{\mathbb{P}}\xspace}
\nc{\bbQ}{\ensuremath{\mathbb{Q}}\xspace}
\nc{\bbR}{\ensuremath{\mathbb{R}}\xspace}
\nc{\bbS}{\ensuremath{\mathbb{S}}\xspace}
\nc{\bbT}{\ensuremath{\mathbb{T}}\xspace}
\nc{\bbU}{\ensuremath{\mathbb{U}}\xspace}
\nc{\bbV}{\ensuremath{\mathbb{V}}\xspace}
\nc{\bbW}{\ensuremath{\mathbb{W}}\xspace}
\nc{\bbX}{\ensuremath{\mathbb{X}}\xspace}
\nc{\bbY}{\ensuremath{\mathbb{Y}}\xspace}
\nc{\bbZ}{\ensuremath{\mathbb{Z}}\xspace}
\DeclareMathSymbol{A}{\mathalpha}{operators}{`A}
\DeclareMathSymbol{B}{\mathalpha}{operators}{`B}
\DeclareMathSymbol{C}{\mathalpha}{operators}{`C}
\DeclareMathSymbol{D}{\mathalpha}{operators}{`D}
\DeclareMathSymbol{E}{\mathalpha}{operators}{`E}
\DeclareMathSymbol{F}{\mathalpha}{operators}{`F}
\DeclareMathSymbol{G}{\mathalpha}{operators}{`G}
\DeclareMathSymbol{H}{\mathalpha}{operators}{`H}
\DeclareMathSymbol{I}{\mathalpha}{operators}{`I}
\DeclareMathSymbol{J}{\mathalpha}{operators}{`J}
\DeclareMathSymbol{K}{\mathalpha}{operators}{`K}
\DeclareMathSymbol{L}{\mathalpha}{operators}{`L}
\DeclareMathSymbol{M}{\mathalpha}{operators}{`M}
\DeclareMathSymbol{N}{\mathalpha}{operators}{`N}
\DeclareMathSymbol{O}{\mathalpha}{operators}{`O}
\DeclareMathSymbol{P}{\mathalpha}{operators}{`P}
\DeclareMathSymbol{Q}{\mathalpha}{operators}{`Q}
\DeclareMathSymbol{R}{\mathalpha}{operators}{`R}
\DeclareMathSymbol{S}{\mathalpha}{operators}{`S}
\DeclareMathSymbol{T}{\mathalpha}{operators}{`T}
\DeclareMathSymbol{U}{\mathalpha}{operators}{`U}
\DeclareMathSymbol{V}{\mathalpha}{operators}{`V}
\DeclareMathSymbol{W}{\mathalpha}{operators}{`W}
\DeclareMathSymbol{X}{\mathalpha}{operators}{`X}
\DeclareMathSymbol{Y}{\mathalpha}{operators}{`Y}
\DeclareMathSymbol{Z}{\mathalpha}{operators}{`Z}
\nc{\mrm}[1]{\ensuremath{\mathrm{#1}}\xspace}
\nc{\mit}[1]{\ensuremath{\mathit{#1}}\xspace}
\nc{\mbf}[1]{\ensuremath{\mathbf{#1}}\xspace}
\nc{\mcal}[1]{\ensuremath{\mathcal{#1}}\xspace}
\nc{\msc}[1]{\ensuremath{\mathscr{#1}}\xspace}
\renc{\ge}{\geqslant}
\renc{\le}{\leqslant}
\nc{\id}{\mathrm{id}}
\DeclareMathOperator{\rk}{\mathrm{rk}}
\DeclareMathOperator{\Hom}{\on{Hom}}
\nc{\uHom}{\underline{\smash{\Hom}}}
\DeclareMathOperator{\Maps}{\on{Maps}}
\DeclareMathOperator{\Aut}{\on{Aut}}
\DeclareMathOperator{\End}{\on{End}}
\nc{\uEnd}{\underline{\smash{\End}}}
\nc{\colim}{\varinjlim}
\renc{\lim}{\varprojlim}
\nc{\Cofib}{\on{Cofib}}
\nc{\Fib}{\on{Fib}}
\nc{\initial}{\varnothing}
\nc{\op}{\mathrm{op}}
\DeclareMathOperator*{\fibprod}{\times}
\renc{\setminus}{\smallsetminus}
\newcommand{\thmref}[1]{Theorem~\ref{#1}}
\newcommand{\secref}[1]{Sect.~\ref{#1}}
\newcommand{\ssecref}[1]{Subsect. ~\ref{#1}}
\newcommand{\sssecref}[1]{(\ref{#1})}
\newcommand{\propref}[1]{Proposition~\ref{#1}}
\newcommand{\corref}[1]{Corollary~\ref{#1}}
\newcommand{\rmkref}[1]{Remark~\ref{#1}}
\newcommand{\defnref}[1]{Definition~\ref{#1}}
\renewcommand{\eqref}[1]{(\ref{#1})}
\newcommand{\constrref}[1]{Construction~\ref{#1}}
\newcommand{\varntref}[1]{Variant~\ref{#1}}
\newcommand{\examref}[1]{Example~\ref{#1}}
\newcommand{\itemref}[1]{\ref{#1}}
\nc{\Spc}{\mrm{Spc}}
\nc{\SCRing}{\mrm{SCRing}}
\nc{\A}{\bA}
\renc{\P}{\bP}
\nc{\Spec}{\on{Spec}}
\nc{\Qcoh}{\on{Qcoh}}
\nc{\bDelta}{\mathbf{\Delta}}
\nc{\Cech}{\textnormal{\v{C}}}
\nc{\Perf}{\on{Perf}}
\nc{\cl}{{\mrm{cl}}}
\nc{\Tot}{\on{Tot}}
\nc{\modmod}{/\!\!/}
\nc{\Bl}{\on{Bl}}
\nc{\vir}{\mrm{vir}}
\nc{\dimvir}{\mrm{vd}}
\nc{\CH}{\on{A}}
\nc{\fibprodR}{\fibprod^\bR}
\nc{\pur}{\mrm{pur}}
\nc{\eul}{\mrm{eul}}
\nc{\DM}{\on{DM}}
\nc{\et}{\mrm{\acute{e}t}}
\nc{\SH}{\on{SH}}
\nc{\SHet}{\SH_\et}
\renc{\H}{\mrm{H}}
\nc{\BM}{\mrm{BM}}
\nc{\un}{\mbf{1}}
\nc{\D}{\bD}
\nc{\Z}{\bZ}
\nc{\Q}{\bQ}
\nc{\KGL}{\mrm{KGL}}
\nc{\MGL}{\mrm{MGL}}
\nc{\Res}{\on{Res}}
\nc{\pr}{\mrm{pr}}
\nc{\Lis}{\mrm{Lis}}
\nc{\vb}[1]{\langle #1\rangle}
\nc{\K}{\on{K}}
\nc{\G}{\on{G}}
\nc{\Pic}{\on{Pic}}
\nc{\Einfty}{{\sE_\infty}}
\renc{\sp}{\mrm{sp}}
\nc{\fund}{\mrm{fund}}
\nc{\Td}{\on{Td}}
\nc{\td}{\on{td}}
\nc{\ch}{\on{ch}}
\nc{\RGamma}{\bR\Gamma}
\renc{\th}{\mrm{th}}
\nc{\red}{\mrm{red}}
\nc{\der}{{\mrm{der}}}
\nc{\scr}{\term{simplicial commutative ring}}
\nc{\scrs}{\term{simplicial commutative rings}}
\nc{\inftyCat}{\term{$\infty$-category}}
\nc{\inftyCats}{\term{$\infty$-categories}}
\nc{\inftyGrpd}{\term{$\infty$-groupoid}}
\nc{\inftyGrpds}{\term{$\infty$-groupoids}}
\nc{\da}{\term{derived algebraic}}
\nc{\dDM}{\term{derived Deligne--Mumford}}
\nc{\dA}{\term{derived Artin}}
\nc{\vfc}{\term{virtual fundamental class}}
\nc{\vfcs}{\term{virtual fundamental classes}}
\title{Virtual fundamental classes of derived~stacks I\vspace{-2mm}}
\author{Adeel~A.~Khan\vspace{-1mm}}
\date{2019-09-03}
\def\l@subsection{\@tocline{2}{0pt}{4pc}{6pc}{}}
\begin{document}

\begin{abstract}
We construct the étale motivic Borel--Moore homology of derived Artin stacks.
Using a derived version of the intrinsic normal cone, we construct fundamental classes of quasi-smooth derived Artin stacks and demonstrate functoriality, base change, excess intersection, and Grothendieck--Riemann--Roch formulas.
These classes also satisfy a general cohomological Bézout theorem which holds without any transversity hypotheses.
The construction is new even for classical stacks and as one application we extend Gabber's proof of the absolute purity conjecture to Artin stacks.
\vspace{-5mm}
\end{abstract}

\maketitle

\renewcommand\contentsname{\vspace{-1cm}}
\tableofcontents

\parskip 0.2cm
\thispagestyle{empty}

%%%%%%%%%%%%%%%%%%%%%%%%%%%%%%%%%%%%%%%%%%%%%%%%%%%%%%%%%%%%%%%%%%%%%%%%%%%

\changelocaltocdepth{1}
\section*{Introduction}

In this paper we revisit the foundations of the theory of \vfcs using the language of derived algebraic geometry.

\subsection*{Quasi-smoothness}

Let $X$ be a smooth algebraic variety of dimension $m$ over a field $k$.
Any collection of regular functions $f_1,\ldots,f_n \in \Gamma(X,\sO_X)$ determines a quasi-smooth derived subscheme $Z = Z(f_1,\ldots,f_n)$ of $X$.
Its underlying classical scheme $Z_\cl$ is the usual zero locus, but $Z$ admits a perfect $2$-term cotangent complex of the form
  \begin{equation*}
    \sL_Z = \left( \sO^{\oplus n}_{Z} \to \Omega_{X}|_{Z} \right)
  \end{equation*}
whose virtual rank encodes the virtual dimension $d=m-n$.
Every quasi-smooth \dA stack $\sZ$ is given by this construction, locally on some smooth atlas.

To any such $\sZ$, the main construction of this paper assigns a \emph{virtual fundamental class} $[\sZ]^\vir$.
More generally, for any quasi-smooth morphism $f : \sX \to \sY$ of \dA stacks, we define a relative virtual fundamental class $[\sX/\sY]^\vir$.

\subsection*{The normal bundle stack}

We begin in \secref{sec:normal} by introducing a derived version of the intrinsic normal cone of Behrend--Fantechi \cite{BehrendFantechi}.
For any quasi-smooth morphism $f : \sX \to \sY$ of \dA stacks, this is a vector bundle stack $N_{\sX/\sY}$ over $\sX$.
When $\sX$ and $\sY$ are classical $1$-Artin stacks and $f$ is a local complete intersection morphism that is representable by Deligne--Mumford stacks, then $N_{\sX/\sY}$ is the relative intrinsic normal cone defined in \cite[Sect.~7]{BehrendFantechi}.
If $f$ is not representable by Deligne--Mumford stacks, then $N_{\sX/\sY}$ is only a $2$-Artin stack.
The key geometric construction, which is joint with D.~Rydh, is called ``deformation to the normal bundle stack''.
For any quasi-smooth morphism $f : \sX \to \sY$ it provides a family of quasi-smooth morphisms parametrized by $\A^1$, with generic fibre $f : \sX \to \sY$ and special fibre the zero section $0 : \sX \to N_{\sX/\sY}$.

\subsection*{Motivic Borel--Moore homology theories}

In \secref{sec:BM} we construct étale motivic Borel--Moore homology theories on \dA stacks.
If $\SH(S)$ denotes Voevodsky's stable motivic homotopy category over a scheme $S$, any object $\sF\in\SH(S)$ gives rise to relative Borel--Moore homology groups
  \begin{equation*}
    \H^\BM_s(X/S, \sF(r)) := \Hom_{\SH(S)}(\un_S(r)[s], f_*f^!(\sF)),
  \end{equation*}
bigraded by integers $r,s\in\Z$ (where $(r)$ denotes the Tate twist), where $X$ is a locally of finite type $S$-scheme with structural morphism $f : X \to S$.
It was observed in \cite{DegliseBivariant} that as $X$ and $S$ vary, these groups behave just like a bivariant theory in the sense of \cite{FultonMacPherson} except that they are bigraded.
Appropriate choices of the coefficient $\sF$ give rise to bivariant versions of such theories as motivic cohomology, algebraic cobordism, étale cohomology with finite or adic coefficients, and singular cohomology.
Using the extension of $\SH$ to derived schemes constructed in \cite{KhanThesis}, we also obtain derived extensions of all these bivariant theories.
Moreover, for coefficients $\sF$ satisfying étale descent, these bivariant theories extend further to \dA stacks (this is done by extending the étale-local motivic homotopy category $\SHet$ and its six operations to \dA stacks, see Appendix~\ref{sec:sixops}).
In \ssecref{ssec:BM/properties} we demonstrate the expected properties: long exact localization sequences, homotopy invariance for vector bundle stacks, and Poincaré duality for smooth stacks.

\subsection*{Fundamental classes}

\secref{sec:fund} contains our construction of the virtual class $[\sX/\sY]^\vir$ of a quasi-smooth morphism $f : \sX \to \sY$ of relative virtual dimension $d$.
Assume $\sF$ is oriented for simplicity.
The idea is that there are canonical isomorphisms
  \begin{equation*}
    \H^\BM_{2d}(\sX/\sY, \sF(d)) \simeq \H^\BM_{2d}(\sX_\cl/\sY_\cl, \sF(d))
  \end{equation*}
through which the virtual class corresponds to a more intrinsic fundamental class $[\sX/\sY] \in \H^\BM_{2d}(\sX/\sY, \sF(d))$.
The latter is constructed, much as in Fulton's intersection theory, by using deformation to the normal bundle stack to define a specialization map
  \begin{equation*}
    \sp_{\sX/\sY}
      : \H^\BM_{s}(\sY/\sS, \sF(r))
      \to \H^\BM_{s}(N_{\sX/\sY}/\sS, \sF(r)),
  \end{equation*}
see \ssecref{ssec:fund/construction}.
By homotopy invariance for vector bundle stacks, the target is identified with $\H^\BM_{s+2d}(\sX/\sY, \sF(r+d))$, so we get a Gysin map
  \begin{equation}\label{eq:intro/Gysin}
    f^! : \H^\BM_{s}(\sY/\sS, \sF(r)) \to \H^\BM_{s+2d}(\sX/\sS, \sF(r+d)).
  \end{equation}
The fundamental class $[\sX/\sY]$ is the image of the unit $1 \in \H^\BM_0(\sY/\sY, \sF)$, where we take $\sS=\sY$.

The two key properties of the fundamental class are functoriality and stability under arbitrary derived base change, see Theorems~\ref{thm:fund/functoriality} and \ref{thm:fund/base change}.
We also have excess intersection, self-intersection, and blow-up formulas (\ssecref{ssec:fund/properties}).
In the sequel we intend to prove analogues of the virtual Atiyah--Bott localization and cosection formulas in this framework.

\subsection*{Non-transverse Bézout theorem}

The fundamental classes satisfy a cohomological Bézout theorem that holds without any transversity hypotheses (\ssecref{ssec:fund/Bezout}).
For schemes, it can be stated in the Chow group as follows.
Let $X$ be a smooth quasi-projective scheme over a field $k$.
Let $f : Y \to X$ and $g : Z \to X$ be quasi-smooth projective morphisms of derived schemes of relative virtual dimensions $-d$ and $-e$, respectively.
Then the intersection product of the fundamental classes $[Y] \in \CH^d(X)$ and $[Z] \in \CH^e(X)$ is given by the fundamental class of the derived fibred product:
  \begin{equation}\label{eq:Bezout}
    [Y] \cdot [Z] = [Y \fibprodR_X Z]
  \end{equation}
in $\CH^{d+e}(X)$.

If $k$ is of characteristic zero, this formula completely characterizes the intersection product in $\CH^*(X)$, since by resolution of singularities the Chow group is generated by fundamental classes $[Z]$ where $f : Z \to X$ is a projective morphism with $Z$ smooth (so that $f$ is automatically quasi-smooth).

\subsection*{Grothendieck--Riemann--Roch}

In \ssecref{ssec:fund/GRR} we prove a generalization of the Grothendieck--Riemann--Roch theorem to \dA stacks.
For a locally noetherian \dA stack $\sX$, denote by $\G(\sX)$ the G-theory of $\sX$, i.e., the Grothendieck group of coherent sheaves on $\sX$.
Let $f : \sX \to \sY$ be a quasi-smooth morphism of \dA stacks, locally of finite type over some regular noetherian base scheme.
Then there is a commutative diagram
  \begin{equation}
    \begin{tikzcd}
      \G(\sY) \ar{rrr}{f^*}\ar{d}{\tau_\sY}
        &&& \G(\sX) \ar{d}{\tau_\sX}
      \\
      \CH_*(\sY)_\Q \ar{rrr}{\Td_{\sX/\sY} \cap f^!}
        &&& \CH_{*}(\sX)_\Q,
    \end{tikzcd}
  \end{equation}
where $\Td_{\sX/\sY}$ is the Todd class of the relative cotangent complex $\sL_{\sX/\sY}$.

In particular, if $\sX$ is a quasi-smooth \dA stack over a field, this gives the following formula for the fundamental class of $\sX$ in $\CH_*(\sX)_\Q$:
  \begin{equation}
    [\sX] = \Td_{\sX}^{-1} \cap \tau_{\sX}(\sO_\sX).
  \end{equation}
Through the canonical isomorphisms $\CH_*(\sX)_\bQ \simeq \CH_*(\sX_\cl)_\bQ$ and $\G(\sX) \simeq \G(\sX_\cl)$, this becomes the formula
  \begin{equation*}
    [\sX]^\vir = (\Td_{\sX}^\vir)^{-1} \cap \left( \sum_{i\in\Z} (-1)^i \cdot \tau_{\sX_\cl}(\pi_i(\sO_\sX)) \right)
  \end{equation*}
in $\CH_*(\sX_\cl)_\Q$, relating the virtual class $[\sX]^\vir \in \CH_*(\sX_\cl)_\bQ$ with the K-theoretic fundamental class in $\G(\sX_\cl)$.
The virtual Todd class $\Td^\vir_\sX$ is the Todd class of the perfect complex $\sL_{\sX}|_{\sX_\cl}$ on $\sX_\cl$.
This extends the formula predicted by Kontsevich in the case of schemes \cite[1.4.2]{Kontsevich}.

\subsection*{Absolute purity}

Our construction of fundamental classes is interesting even when we restrict to classical algebraic geometry; in this case quasi-smoothness translates to being a local complete intersection morphism (which need not admit a \emph{global} factorization through a regular immersion and smooth morphism).
For example, we get Gysin maps for proper lci morphisms between Artin stacks in étale cohomology and mixed Weil cohomology theories such as Betti and de Rham cohomology.
In terms of the six operations, if $f : \sX \to \sY$ is an lci morphism of virtual dimension $d$ between Artin stacks, then the fundamental class can be viewed as a canonical morphism
  \begin{equation}\label{eq:intro/pur}
    f^*\sF(d)[2d] \to f^!(\sF)
  \end{equation}
for any coefficient $\sF$.
In the context of étale cohomology, such morphisms were constructed previously by Gabber \cite{Fujiwara}, \cite[Exp.~XVI]{ILO} in the case of schemes and assuming the existence of a global factorization of $f$.
Thus taking $\sF$ to be the étale motivic cohomology spectrum $\Lambda^\et$ (with coefficients in $\Lambda=\bZ/n\bZ$, $n$ invertible on $\sY$) gives a generalization of Gabber's construction.
In \ssecref{ssec:fund/absolute purity} we prove that the morphism \eqref{eq:intro/pur} is invertible when $\sF = \Lambda^\et$ and $\sX$ and $\sY$ are regular Artin stacks.
This extends Gabber's proof of the absolute purity conjecture to Artin stacks (and drops the global factorization hypothesis in the case of schemes).

\subsection*{Related work}

The yoga of fundamental classes in motivic bivariant theories was developed in \cite{DegliseBivariant} and \cite{DegliseJinKhan}.
This paper extends these constructions on one hand from classical to derived algebraic geometry, and on the other hand from schemes to algebraic stacks (at least for étale coefficients).

The notion of perfect obstruction theory introduced by K.~Behrend and B.~Fantechi \cite{BehrendFantechi} is a useful approximation to a quasi-smooth derived structure on a scheme or Deligne--Mumford stack, and actually suffices for the construction of virtual fundamental classes on Deligne--Mumford stacks.
This construction was done in \cite{BehrendFantechi} in Chow groups, and has been refined to algebraic cobordism and other Borel--Moore homology theories recently by M.~Levine \cite{LevineIntrinsic} and Y.-H.~Kiem and H.~Park \cite{KiemPark}.
Our construction agrees with the P.O.T. approach when both are defined (see \ssecref{ssec:fund/BF}), but it is worth noting that a quasi-smooth \dA stack typically has a $3$-term cotangent complex, so that the P.O.T. formalism does not apply (in fact, there is no associated intrinsic normal cone in the world of classical $1$-stacks).

Virtual fundamental classes have been studied using the language of derived algebraic geometry previously in the setting of algebraic cobordism by P.~Lowrey and T.~Schürg \cite{LowreySchuerg}.
They were also studied using the older language of dg-schemes by I.~Ciocan-Fontanine and M.~Kapranov \cite{CiocanFontanineKapranov} in rational Chow groups and G-theory.
These approaches only work for derived \emph{schemes} and also require other unpleasant hypotheses such as existence of a characteristic zero base field and embeddings into smooth ambient schemes.
The Bézout formula \eqref{eq:Bezout} mentioned above was inspired by a similar formula announced by J.~Lurie \cite{LurieThesis} in Betti cohomology.

Classical Borel--Moore homology was recently extended to Artin stacks by M.~Kapranov and E.~Vasserot \cite{KapranovVasserot}, for the purpose of defining a cohomological Hall algebra whose underlying vector space is the Borel--Moore homology of the moduli stack of coherent sheaves on a surface.
Our formalism gives a streamlined approach to the construction of this algebra, whose multiplicative structure arises from the quasi-smooth structure on the moduli stack.
Moreover it shows that the same structure exists on the Borel--Moore homology with coefficients in any étale motivic spectrum.

\subsection*{Acknowledgments}
During the very long gestation period of this paper, I benefited from helpful discussions with Denis-Charles Cisinski, Frédéric Déglise, Marc Hoyois, Fangzhou Jin, Marc Levine, Mauro Porta, Charanya Ravi, Marco Robalo, and especially David Rydh.
Thanks to the organizers of the June 2019 summer school ``New perspectives in Gromov-Witten theory'' in Paris which made some of the above conversations possible and where I was inspired to finally write up these results.
Thanks to the Institute for Advanced Study which hosted me in July 2019 while the first draft of this paper was being finished.

\changelocaltocdepth{2}
\section{The intrinsic normal bundle}
\label{sec:normal}

\subsection{Stacks}

In this paper, we define a stack to be a ``higher stack'' in the sense of \cite{HirschowitzSimpson}.
That is, it is a functor
  \begin{equation*}
    R \mapsto \sX(R)
  \end{equation*}
assigning to any commutative ring $R$ an $\infty$-groupoid $\sX(R)$ of $R$-valued points, and satisfying hyperdescent with respect to the étale topology (in the sense of $\infty$-category theory, see e.g. \cite[p.~183]{ToenSurvey}).

We say $\sX$ is \emph{$0$-Artin} if it is (representable by) an algebraic space.
We define $k$-Artin stacks inductively, following \cite[\S 3.1]{ToenLectures}.

For an integer $k\ge 0$, a morphism $f : \sX \to \sY$ is \emph{$k$-representable} if for every $k$-Artin $\sY'$ and every morphism $\sY' \to \sY$, the fibred product $\sX \fibprod_\sY \sY'$ is $k$-Artin.
An $k$-representable morphism $f$ is \emph{smooth} if for every scheme $Y$, every morphism $Y \to \sY$, and every smooth atlas $X \to \sX \fibprod_\sY Y$, the composite $X \to \sX\fibprod_\sY Y \to Y$ is a smooth morphism of schemes.
A stack $\sX$ is \emph{$(k+1)$-Artin} if its diagonal $\sX \to \sX \times \sX$ is representable by $k$-Artin stacks, and there exists a scheme $X$ and a morphism $X \to \sX$ (automatically $k$-representable) which is smooth and surjective.
The morphism $X \to \sX$ is called a \emph{smooth atlas} for $\sX$.

An $k$-Artin stack $\sX$ always takes values in $k$-groupoids: for every commutative ring $R$, the $\infty$-groupoid $\sX(R)$ is $k$-truncated.
We say a stack is \emph{Artin} if it is $k$-Artin for some $k\ge 0$.
Artin stacks in this sense form an \inftyCat, whose full subcategory spanned by $1$-Artin stacks is equivalent to the $(2,1)$-category of Artin stacks in the usual sense.

Now replace the category of commutative rings by its nonabelian derived \inftyCat, i.e., the \inftyCat of simplicial commutative rings.
This is the natural target for derived functors on the nonabelian category of commutative rings, such as the derived tensor product.
A simplicial commutative ring $R$ has an underlying ordinary commutative ring $\pi_0(R)$ as well as $\pi_0(R)$-modules $\pi_i(R)$.
We say $R$ is \emph{discrete} if $\pi_i(R) = 0$ for all $i>0$ (i.e., $R \simeq \pi_0(R)$); the discrete simplicial commutative rings span a full subcategory equivalent to the ordinary category of commutative rings.
The notions of étale and smooth homomorphism admit natural extensions to simplicial commutative rings.
See \cite[Chap.~25]{SAG} or \cite[\S 4]{ToenLectures}.

A \emph{derived stack} $\sX$ is a functor $R \mapsto \sX(R)$, assigning an $\infty$-groupoid of $R$-points to every simplicial commutative ring $R$, that satisfies étale hyperdescent.
Derived $k$-Artin and Artin stacks are defined following the pattern outlined above, see e.g. \cite[\S 5.2]{ToenLectures} for details.

\subsection{Vector bundle stacks}

Let $\sX$ be a \dA stack and $\sE$ a perfect complex on $\sX$ of Tor-amplitude $[-k,1]$, for some integer $k\ge -1$.
The associated \emph{vector bundle stack}
  \begin{equation*}
    \pi : \bV_\sX(\sE[-1]) \to \sX
  \end{equation*}
is the moduli stack of co-sections of $\sE[-1]$.
That is, for any affine derived scheme $S$ over $\sX$, the $\infty$-groupoid of $\sX$-morphisms $S \to \bV_\sX(\sE[-1])$ is naturally equivalent to the $\infty$-groupoid of $\sO_S$-linear morphisms of perfect complexes $\sE[-1]|_S \to \sO_S$.

Since $\sE[-1]$ is perfect of Tor-amplitude $[-k-1,0]$, $\bV_\sX(\sE[-1])$ is a smooth $(k+1)$-Artin derived stack over $\sX$ of relative dimension $-d$, where $d$ is the virtual rank of $\sE$.
See \cite[Subsect.~3.3, p.~201]{ToenSurvey}.

\subsection{Normal bundle stacks}

The normal bundle stack is a derived version of the relative intrinsic normal cone of \cite{BehrendFantechi}.

A morphism $f : \sX \to \sY$ of \dA stacks is \emph{quasi-smooth} if it is locally of finite presentation and the relative cotangent complex $\sL_{\sX/\sY}$ is of Tor-amplitude $(-\infty, 1]$.
Note that we use homological grading: this means that, for every discrete quasi-coherent sheaf $\sE$ on $\sX$, we have
  \begin{equation*}
    \pi_{i}(\sL_{\sX/\sY} \otimes^\bL_{\sO_X} \sE) = 0
  \end{equation*}
for $i > 1$.
If $\sX=X$ and $\sY=Y$ are derived schemes, this is equivalent to the following condition: Zariski-locally on $X$, $f$ factors through a smooth morphism $M \to Y$ and a morphism $X \to M$ which exhibits $X$ as the derived zero-locus of some functions on $M$ \cite[2.3.14]{KhanRydh}.
If $f : \sX \to \sY$ is $k$-representable, then it is quasi-smooth if and only if for every derived scheme $Y$, every morphism $Y \to \sY$, and every smooth atlas $X \to \sX \fibprod_\sY Y$, the composite $X \to \sX\fibprod_\sY Y \to Y$ is a quasi-smooth morphism of derived schemes.
The \emph{relative virtual dimension} of a quasi-smooth morphism $f : \sX \to \sY$ is
  \begin{equation*}
    \dimvir(\sX/\sY) := \rk(\sL_{\sX/\sY}),
  \end{equation*}
the virtual rank (Euler characteristic) of the relative cotangent complex.

Let $f : \sX \to \sY$ be a $k$-representable quasi-smooth morphism.
The cotangent complex $\sL_{\sX/\sY}$ is perfect of Tor-amplitude $[-k,1]$, so the associated vector bundle stack $\bV_\sX(\sL_{\sX/\sY}[-1])$ is a smooth $(k+1)$-Artin stack of relative virtual dimension $-\dimvir(\sX/\sY)$.

\begin{defn}\label{defn:normal bundle stack}
Let $f : \sX \to \sY$ be a quasi-smooth morphism of \dA stacks.
The \emph{normal bundle stack} is the vector bundle stack
  \begin{equation*}
    N_{\sX/\sY} = \bV_\sX(\sL_{\sX/\sY}[-1]) \to \sX.
  \end{equation*}
\end{defn}

If $f$ is a closed immersion, then $\sL_{\sX/\sY}[-1]$ is of Tor-amplitude $[0,0]$, and the normal bundle stack is just the normal bundle.
If $f$ is smooth, then $\sL_{\sX/\sY}[-1]$ is of Tor-amplitude $(-\infty,-1]$, and the normal bundle stack is the classifying stack of the tangent bundle $T_{\sX/\sY}$.
If $f$ factors through a closed immersion $i : \sX \to \sY'$ and a smooth morphism $p : \sY' \to \sY$, then the normal bundle stack is the quotient
  \begin{equation*}
    N_{\sX/\sY} = [N_{\sX/\sY'}/i^*T_{\sY'/\sY}].
  \end{equation*}

\begin{prop}\leavevmode
\label{prop:normal bundle}

\begin{thmlist}
  \item
  The construction $N_{\sX/\sY} \to \sX$ is stable under derived arbitrary base change in $\sX$.
  That is, for any homotopy cartesian square of \dA stacks
    \begin{equation*}
      \begin{tikzcd}
        \sX' \ar{r}{f'}\ar{d}
          & \sY' \ar{d}
        \\
        \sX \ar{r}{f}
          & \sY
      \end{tikzcd}
    \end{equation*}
  with $f$ quasi-smooth, there is a canonical isomorphism
    \begin{equation*}
      N_{\sX/\sY} \fibprodR_\sX \sX' \to N_{\sX'/\sY'}
    \end{equation*}
  of \dA stacks over $\sX'$.

  \item\label{item:normal bundle/presentation}
  Suppose given a commutative square
    \begin{equation*}
      \begin{tikzcd}
        X \ar{r}{i}\ar{d}{p}
          & Y \ar{d}{q}
        \\
        \sX \ar{r}{f}
          & \sY
      \end{tikzcd}
    \end{equation*}
  with $f$ quasi-smooth, $p$ and $q$ smooth surjections with $X$ and $Y$ schematic, and $i$ a quasi-smooth closed immersion.
  Then $N_{\sX/\sY}$ is the quotient of the groupoid
    \begin{equation*}
      N_{\Cech(X/\sX)_\bullet/\Cech(Y/\sY)_\bullet}
        := \left[ \cdots \rightrightrightarrows N_{X\fibprodR_\sX X / Y\fibprodR_\sY Y} \rightrightarrows N_{X/Y} \right],
    \end{equation*}
  i.e., the geometric realization of this simplicial diagram.
\end{thmlist}
\end{prop}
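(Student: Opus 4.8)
Part (i) will follow by combining two base-change statements. Write $g\colon\sX'\to\sX$ for the morphism induced by the cartesian square. Base change for the cotangent complex gives $\sL_{\sX'/\sY'}\simeq g^*\sL_{\sX/\sY}$, while the moduli description of vector bundle stacks gives, for any perfect complex $\sE$ on $\sX$ of Tor-amplitude $[-k,1]$, a canonical isomorphism $\bV_{\sX'}(g^*\sE[-1])\simeq\bV_\sX(\sE[-1])\fibprodR_\sX\sX'$ over $\sX'$. Taking $\sE=\sL_{\sX/\sY}$ yields $N_{\sX/\sY}\fibprodR_\sX\sX'\xrightarrow{\sim}N_{\sX'/\sY'}$, naturally in $\sX'$.

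For part (ii) the plan is to present $N_{\sX/\sY}$ by smooth descent along the atlas. Since $p$ is a smooth surjection it is an effective epimorphism of derived stacks, so $\sX\simeq|\Cech(X/\sX)_\bullet|$, and likewise $\sY\simeq|\Cech(Y/\sY)_\bullet|$; the pair $(i,f)$ assembles into a map of simplicial derived stacks $\Cech(X/\sX)_\bullet\to\Cech(Y/\sY)_\bullet$ over $f$, each term $\Cech(X/\sX)_n\to\Cech(Y/\sY)_n$ being quasi-smooth (it is built by base change from $i$, from $f$, and from the quasi-smooth morphism $X\to Y\fibprodR_\sY\sX$), so $N_{\Cech(X/\sX)_\bullet/\Cech(Y/\sY)_\bullet}$ is defined. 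Composing the transitivity triangles of $\Cech(X/\sX)_n\to\sX\xrightarrow{f}\sY$ and $\Cech(X/\sX)_n\to\Cech(Y/\sY)_n\to\sY$ produces a natural map $a_n^*\sL_{\sX/\sY}\to\sL_{\Cech(X/\sX)_n/\Cech(Y/\sY)_n}$ (with $a_n$ the projection to $\sX$), hence a natural map $N_{\Cech(X/\sX)_n/\Cech(Y/\sY)_n}\to N_{\sX/\sY}\fibprodR_\sX\Cech(X/\sX)_n$. As geometric realizations are universal in the $\infty$-topos of derived stacks, the right-hand side realizes to $N_{\sX/\sY}$, so this produces a comparison morphism $|N_{\Cech(X/\sX)_\bullet/\Cech(Y/\sY)_\bullet}|\to N_{\sX/\sY}$, which I claim is an equivalence.

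Proving that equivalence is the crux, and I expect it to be the main obstacle, since it cannot be reduced to a termwise statement: when $p$ and $q$ are not étale the complexes $a_n^*\sL_{\sX/\sY}$ and $\sL_{\Cech(X/\sX)_n/\Cech(Y/\sY)_n}$ genuinely differ — their cofibre is an extension built from $\sL_{\Cech(X/\sX)_n/\sX}$ and $\sL_{\Cech(Y/\sY)_n/\sY}[1]$, which are then nonzero — so the two simplicial objects being compared agree only after realization. A clean route is to use the deformation-theoretic description of the vector bundle stack: on an affine derived scheme $T$ over $\sX$, the fibre of $N_{\sX/\sY}\to\sX$ over $T\to\sX$ is the space of lifts of $T\to\sX$ along the trivial square-zero thickening $T\hookrightarrow\Spec_T(\sO_T\oplus\sO_T[1])$, over $\sY$; since $p$ is formally smooth such a lift factors through the atlas $X$ étale-locally on $T$, hence through $\Cech(X/\sX)_\bullet$, which should exhibit $N_{\sX/\sY}$ directly as the colimit of the $N_{\Cech(X/\sX)_n/\Cech(Y/\sY)_n}$ compatibly with the comparison map. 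Alternatively one can induct on the Artin levels of $\sX$ and $\sY$, reducing to the case of derived schemes and then to an affine chart on which $f$ is a derived zero locus of functions, where $N$ is an honest two-term vector bundle and the descent is explicit. In either approach the delicate point is reconciling the two colimit presentations while tracking the Leibniz-type twists that appear in the simplicial face maps along the non-étale directions of the atlases.
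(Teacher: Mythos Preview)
Your argument for part (i) is correct and matches the paper exactly: both reduce to base change for the cotangent complex together with the evident base change for vector bundle stacks.

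For part (ii), the paper's proof is a single sentence: it invokes smooth descent for the cotangent complex (Bhatt, \cite[Cor.~2.7]{Bhatt}) and stops there. You have correctly identified the substantive point that the paper leaves implicit --- the pullback $a_n^*\sL_{\sX/\sY}$ and the termwise cotangent complex $\sL_{X_n/Y_n}$ genuinely differ by the smooth twists coming from $\sL_{X_n/\sX}$ and $i_n^*\sL_{Y_n/\sY}$ --- so your instinct that something must be said is right. However, your proposed resolutions (the deformation-theoretic lifting argument, or induction on Artin level) are more elaborate than necessary.

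A cleaner route, closer in spirit to what the paper presumably intends, is this: show directly that the induced map $N_{X/Y} \to N_{\sX/\sY}$ is a smooth surjection, and that its \v{C}ech nerve is canonically identified with $N_{X_\bullet/Y_\bullet}$. Smoothness follows because the map factors as $N_{X/Y} \to p^*N_{\sX/\sY} \to N_{\sX/\sY}$, the second arrow is base-changed from the smooth $p$, and the first is a torsor under $\bV_X(\sK[-1])$ where $\sK = \mrm{cofib}(p^*\sL_{\sX/\sY} \to \sL_{X/Y})$; the two transitivity triangles you wrote down show $\sK$ sits in a triangle between $\sL_{X/\sX}$ and $i^*\sL_{Y/\sY}[1]$, both controlled by locally free sheaves, so $\bV_X(\sK[-1])$ is smooth. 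Identifying the \v{C}ech nerve is then a direct cotangent-complex computation using base change (part (i)) and the same transitivity triangles at each simplicial level. Once you know $N_{X/Y} \to N_{\sX/\sY}$ is a smooth atlas with that \v{C}ech nerve, the realization statement is immediate. This avoids the delicate bookkeeping of ``Leibniz twists'' you flag at the end, and it makes transparent why the termwise discrepancy is harmless: it is precisely the relative tangent of the atlas $N_{X/Y} \to N_{\sX/\sY}$.
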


\begin{proof}
The first claim follows from the fact that the cotangent complex is stable under derived base change \cite[Prop.~3.2.10]{LurieThesis}.
The second follows from the fact that the cotangent complex satifies descent for smooth surjections \cite[Cor.~2.7]{Bhatt}.
\end{proof}

\subsection{Deformation to the normal bundle stack}
\label{ssec:deformation}

For any quasi-smooth morphism $f : \sX \to \sY$, there is a canonical $\A^1$-deformation to the zero section $0 : \sX \to N_{\sX/\sY}$, generalizing the classical construction of Verdier.
This construction is joint with D.~Rydh.

\begin{thm}\label{thm:deformation/main}
Let $f : \sX \to \sY$ be a quasi-smooth morphism of \dA stacks.

\begin{thmlist}
  \item
  There exists a quasi-smooth \dA stack $D_{\sX/\sY}$ over $\sY\times\A^1$, and a quasi-smooth morphism
    \begin{equation*}
      \sX\times\A^1 \to D_{\sX/\sY}
    \end{equation*}
  over $\sY \times \A^1$.
  The fibre over $\bG_m = \A^1\setminus\{0\}$ is the quasi-smooth morphism $\sX\times\bG_m \to \sY\times\bG_m$ and the fibre over $\{0\}$ is the quasi-smooth morphism $0 : \sX \to N_{\sX/\sY}$.

  \item\label{item:deformation/main/base change}
  The construction $D_{\sX/\sY} \to \sY$ is stable under arbitrary derived base change in $\sY$.
\end{thmlist}
\end{thm}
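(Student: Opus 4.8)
The plan is to construct $D_{\sX/\sY}$ by the same étale-local descent strategy used to define $N_{\sX/\sY}$ in \propref{prop:normal bundle}, reducing to the case of a quasi-smooth closed immersion of derived schemes and then to the key case of a derived zero-locus. First I would treat the \emph{affine local model}: suppose $\sX = X$, $\sY = Y$ are affine derived schemes and $f$ factors as a quasi-smooth closed immersion $i : X \hookrightarrow M$ followed by a smooth morphism $M \to Y$, with $X$ the derived vanishing locus of a section $s$ of a vector bundle $E$ on $M$ (locally such a factorization exists by \cite[2.3.14]{KhanRydh}). For a closed immersion, Verdier's deformation to the normal cone has a derived enhancement: one takes the Rees-type construction, i.e. the derived blow-up of $M \times \A^1$ along $X \times \{0\}$ with the strict transform of $M \times \{0\}$ removed, giving $D_{X/M}$ with generic fibre $X \hookrightarrow M$ and special fibre the zero section $X \hookrightarrow N_{X/M}$. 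In the zero-locus presentation this is completely explicit: $N_{X/M} = \bV_X(E^\vee|_X)$ and $D_{X/M}$ is the derived zero-locus in $M \times \A^1$ of the section $t \cdot s$ of the pullback of $E$, where $t$ is the coordinate on $\A^1$ — rescaling $s$ by the parameter $t$ degenerates the equations to their linearizations. To handle the composite with the smooth morphism $M \to Y$ one sets $D_{X/Y}$ to be the same derived stack $D_{X/M}$, now viewed over $Y \times \A^1$ via $M \times \A^1 \to Y \times \A^1$; the morphism $X \times \A^1 \to D_{X/Y}$ is quasi-smooth because it factors through the quasi-smooth closed immersion into $D_{X/M}$ and the smooth $D_{X/M} \to Y \times \A^1$, and the special fibre is $0 : X \to N_{X/Y} = [N_{X/M}/i^*T_{M/Y}]$ exactly as in the description following \defnref{defn:normal bundle stack}.

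The second step is to check \emph{independence of the chosen factorization}, so that these local models glue. Given two factorizations through smooth ambient schemes $M_1, M_2$, one compares them via $M_1 \times_Y M_2$ (or, more robustly, shows the construction is functorial for morphisms of factorizations and that the category of factorizations is filtered/contractible in the relevant sense); this is the standard argument that makes Verdier's deformation canonical, and it extends to the derived setting because the derived blow-up / derived zero-locus construction is stable under smooth base change. Concretely, if $M_1 \to M_2$ is a smooth morphism compatible with the immersions, then $D_{X/M_1} \to D_{X/M_2}$ is smooth and induces on special fibres the projection $N_{X/M_1} \to N_{X/M_2}$; passing to the quotients by the relative tangent bundles recovers $N_{X/Y}$ either way.

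The third step is to \emph{globalize by descent}. Using a commutative square as in \propref{prop:normal bundle}\itemref{item:normal bundle/presentation} — a smooth schematic atlas $Y \to \sY$, a smooth schematic atlas $X \to \sX \fibprodR_\sY Y$, and the induced quasi-smooth closed immersion $i : X \hookrightarrow Y$ after a further smooth factorization — one forms the \Cech nerves and applies the affine construction levelwise to get a simplicial object $D_{\Cech(X/\sX)_\bullet / \Cech(Y/\sY)_\bullet}$; I would define $D_{\sX/\sY}$ as its geometric realization. That this is a quasi-smooth \dA stack over $\sY \times \A^1$, and that $\sX \times \A^1 \to D_{\sX/\sY}$ is quasi-smooth with the asserted fibres over $\bG_m$ and $\{0\}$, follows by checking these properties on the atlas (where they hold by the affine computation), using that quasi-smoothness and the identification of fibres are smooth-local and stable under geometric realization of groupoid objects, just as in the proof of \propref{prop:normal bundle}. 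Claim \itemref{item:deformation/main/base change} is then automatic: the affine model $D_{X/Y}$ is manifestly compatible with derived base change in $Y$ (the derived zero-locus of $t \cdot s$ commutes with derived pullback), the cotangent complex is stable under derived base change, and geometric realizations commute with the (exact) base-change functors; so the whole construction descends the base-change isomorphism.

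The main obstacle is the gluing/independence-of-factorization step: unlike the normal bundle stack, whose intrinsic description via $\sL_{\sX/\sY}$ makes descent nearly formal, the deformation space is built from an auxiliary smooth embedding, so one must genuinely verify that the derived Verdier construction is functorial in the factorization and that changing the atlas induces canonical equivalences of the resulting simplicial diagrams — i.e., that the assignment "factorization $\mapsto D$" is a functor out of a contractible $\infty$-category. Once that coherence is in place, everything else is a matter of transporting known smooth-local properties along the atlas.
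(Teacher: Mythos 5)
Your overall strategy — reduce to the case of a quasi-smooth closed immersion of (derived) schemes, then globalize to \dA stacks by applying the construction levelwise to \v{C}ech nerves of smooth atlases and taking geometric realization — is exactly the groupoid-presentation approach the paper sketches immediately after the theorem, citing \cite[Thm.~4.1.13]{KhanRydh} for the closed-immersion case. (The paper itself defers full details to future work, so there is no complete argument to compare against; you are matching the outline.)

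However, your description of the affine local model is incorrect. You assert that for $X = Z(s) \hookrightarrow M$ cut out by a section $s$ of a vector bundle $E$, the deformation space is ``the derived zero-locus in $M \times \A^1$ of the section $t \cdot s$.'' This cannot be $D_{X/M}$: the zero-locus $Z(t \cdot s) \subset M \times \A^1$ has fibre $Z(s) = X$ over $\bG_m$ (since $t$ is invertible there) and fibre $Z(0) = M$ over $\{0\}$, i.e.\ precisely the \emph{opposite} of the required degeneration, whose total space should have fibre $M$ generically and fibre $N_{X/M}$ over $\{0\}$, with $X \times \A^1$ sitting inside as a closed subscheme. The correct description (as in \cite[Thm.~4.1.13]{KhanRydh}) is what you state in passing a sentence earlier: $D_{X/M}$ is the complement of the strict transform of $M \times \{0\}$ in the derived blow-up $\Bl_{X \times \{0\}}(M \times \A^1)$, or equivalently a Rees-algebra construction; it is an \emph{open} subscheme of a blow-up, not a closed derived subscheme of $M \times \A^1$. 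Since the ``zero-locus of $t \cdot s$'' formula also underpins your argument for base-change stability in part (ii) (``the derived zero-locus of $t \cdot s$ commutes with derived pullback''), that argument as written is vacuous; the base-change statement is true but should instead be inherited from the base-change property of derived blow-ups established in \cite{KhanRydh}.

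You correctly identify the coherence/independence-of-factorization step as the main obstacle, and it is worth noting that the paper offers an intrinsic description designed to sidestep exactly this: $D_{\sX/\sY} = \Res_{\sY/\sY \times \A^1}(\sX)$, the Weil restriction of $\sX$ along the closed immersion $\sY = \sY \times \{0\} \hookrightarrow \sY \times \A^1$. This characterizes $D_{\sX/\sY}$ by a universal property that makes no reference to a smooth atlas or a choice of embedding, so functoriality and base-change stability in $\sY$ become essentially formal, and the groupoid presentation is derived as a consequence rather than used as the definition. If you pursue the atlas-by-atlas construction, you would need to invoke the full functoriality of the KhanRydh deformation for closed immersions under smooth morphisms of pairs to verify that the levelwise $D_{X_\bullet/Y_\bullet}$ assemble into a simplicial object satisfying the groupoid/Segal conditions; the Weil restriction description gives this for free.
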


In the case where $f$ is a closed immersion, $D_{\sX/\sY}$ was already constructed in \cite[Thm.~4.1.13]{KhanRydh}.
For a general quasi-smooth morphism with a presentation as in \propref{prop:normal bundle}\itemref{item:normal bundle/presentation}, it can be described as the quotient of the groupoid
  \begin{equation*}
    D_{\Cech(X/\sX)_\bullet/\Cech(Y/\sY)_\bullet}
      := \left[ \cdots \rightrightrightarrows D_{X\fibprodR_\sX X / Y\fibprodR_\sY Y} \rightrightarrows D_{X/Y} \right].
  \end{equation*}
Without choosing a presentation, it can be described simply as the Weil restriction
  \begin{equation*}
    D_{\sX/\sY} = \Res_{\sY/\sY\times\A^1}(\sX)
  \end{equation*}
of $\sX$ along $\sY = \sY\times\{0\} \to \sY\times\A^1$.
Details will be provided elsewhere.

\section{Motivic Borel--Moore homology of derived stacks}
\label{sec:BM}

In this section we construct, given a ``coefficient'' $\sF$ over a \dA stack $\sS$, a (relative) Borel--Moore homology theory with coefficients in $\sF$.
The main example is $\sF = \Q_{\sS}$, the rational motivic cohomology spectrum.
The construction requires a formalism of six operations on \dA stacks such as that developed in Appendix~\ref{sec:sixops}.

\subsection{Definition and examples}
\label{ssec:BM/definition}

Let $\sS$ be a \dA stack and let $\sF \in \SHet(\sS)$ be an étale motivic spectrum (see Appendix~\ref{sec:sixops}).

\begin{defn}\label{defn:BM}
For a \dA stack $\sX$ locally of finite type over $\sS$ with structural morphism $f : \sX \to \sS$, we define Borel--Moore homology with coefficients in $\sF$ by the formula
  \begin{equation}
    \H^\BM_s(\sX/\sS, \sF(r)) = \Hom_{\SHet(\sS)}(\un_\sS(r)[s], f_*f^!\sF),
    \qquad
    r,s\in\Z
  \end{equation}
where $\un_\sS \in \SHet(\sS)$ is the monoidal unit.
Similarly we define cohomology with coefficients in $\sF$ by
  \begin{equation}
    \H^s(\sX, \sF(r)) = \Hom_{\SHet(\sS)}(\un_{\sS}, f_*f^*\sF(r)[s])
  \end{equation}
for any \dA stack $\sX$ over $\sS$.
\end{defn}

The observation that $\H^s(\sX, \sF(r)) = \H^\BM_{-s}(\sX/\sX, \sF(-r))$ (by adjunction) allows us to pass freely from Borel--Moore homology statements to their cohomological counterparts, which is why we generally stick with the former perspective.
For an immersion $i : \sY \to \sX$, we have also cohomology with support:
  \begin{equation}\label{eq:cohomology with support}
    \H^{s}_\sY(\sX, \sF(r)) = \H^\BM_{-s}(\sX/\sY, \sF(-r)).
  \end{equation}

\begin{rmk}
The Borel--Moore homology groups $\H^\BM_s(\sX/\sS, \sF(r))$ only depend on the \emph{homotopy category} (underlying triangulated category) of the stable \inftyCat $\SHet(\sS)$.
A more refined object is the \emph{spectrum} (in the sense of homotopical algebra)
  \begin{equation*}
    \RGamma^\BM(\sX/\sS, \sF(r)) := \Maps_{\SHet(\sS)}(\un_S(r), f_*f^!\sF),
  \end{equation*}
defined using the spectral enrichment of $\SHet(\sS)$.
The groups $\H^\BM_s(\sX/\sS, \sF(r))$ are the homotopy groups $\pi_{s} \RGamma^\BM(\sX/\sS, \sF(r))$.
Similarly, there is a cohomology spectrum
  \begin{equation*}
    \RGamma(\sX, \sF(r)) := \Maps_{\SHet(\sS)}(\un_S(r), f_*f^*\sF).
  \end{equation*}
\end{rmk}

\begin{rmk}\label{rmk:Gamma^BM as limit}
Let $\sS = S$ be a \da space and $\sX$ a locally of finite type \dA stack over $S$.
The formula \eqref{eq:SHet limit} implies that the Borel--Moore spectra $\RGamma^\BM(\sX/S, \sF(r))$ can be computed by the homotopy limit
  \begin{equation}
    \RGamma^\BM(\sX/S, \sF(r))
      = \lim_u \RGamma^\BM(X/S, \sF(r+d_u))[-2d_u]
  \end{equation}
over the $\infty$-category of smooth morphisms $u : X \to \sX$ with $X$ a scheme, where $d_u$ is the relative dimension of $u$.
Similarly, the cohomology spectrum is computed as the homotopy limit
  \begin{equation}
    \RGamma(\sX, \sF(r)) = \lim_u \RGamma(X, \sF(r)).
  \end{equation}
Alternatively, we can fix a smooth atlas $X \to \sX$ and use \eqref{eq:SHet totalization} to write $\RGamma^\BM(\sX/S, \sF(r))$ as the homotopy limit or totalization of the cosimplicial diagram
  \begin{multline}\label{eq:Gamma^BM as totalization}
    \RGamma^\BM(X/S, \sF(r+d))[-2d]
      \rightrightarrows \RGamma^\BM(X\fibprodR_\sX X/S, \sF(r+2d))[-4d]
    \\
      \rightrightrightarrows \RGamma^\BM(X\fibprodR_\sX X \fibprodR_\sX X/S, \sF(r+3d)[-6d])
      \rightrightrightrightarrows \cdots
  \end{multline}
where $d = \dimvir(X/\sX)$, and again similarly for $\RGamma(\sX, \sF(r))$.
\end{rmk}

\begin{exam}\label{exam:Q}
Let $\Q$ denote the rational motivic cohomology spectrum over $\Spec(\Z)$ (see \cite[Chap.~14]{CisinskiDegliseBook}, \cite{Spitzweck}).
It satisfies étale (hyper)descent \cite[Prop.~2.2.10]{CisinskiDegliseEtale}, so for any \dA stack $\sS$ we may define $\Q_\sS$ as its inverse image along $\sS\to\Spec(\Z)$.
The groups
  $$
  \H^\BM_s(\sX/\sS, \Q(r)),
  \quad
  \H^s(\sX, \Q(r))
  $$
are simply called the (rational) \emph{motivic Borel--Moore homology} and \emph{motivic cohomology} groups.
If $\sS$ is the spectrum of a field $k$ and $\sX=X$ is a quasi-projective classical scheme, then motivic Borel--Moore homology is computed as the cohomology of Bloch's cycle complex; in particular $$\H^\BM_{2n}(X/\Spec(k), \Q(n)) = \CH_n(X)_\Q.$$
For $X$ a classical scheme locally of finite type over a field $k$, it is computed by the Zariski hypercohomology of the same complex.
More precisely, $\RGamma^\BM(X/\Spec(k), \Q(r))$ is the Zariski localization of Bloch's cycle complex.
Thus for $\sX$ an Artin stack locally of finite type over $k$, $\H^\BM_*(\sX/\Spec(k), \Q(r))$ is computed according to the formula \eqref{eq:Gamma^BM as totalization} by the étale hypercohomology of the complex
  \begin{equation*}
    \lim_{[m] \in \bDelta} z_{r+md}(X \fibprodR_\sX \cdots \fibprodR_\sX X, \ast)_\Q[-2md]
  \end{equation*}
where there are $m$ terms in the fibred product.
These are thus the same as the rational higher Chow groups defined by Joshua \cite{JoshuaHigher}, and they agree with the rationalization of Kresch's Chow groups \cite{Kresch}.
\end{exam}

\begin{exam}\label{exam:Z_et}
Integrally, we can take the étale motivic cohomology spectrum $\bZ^\et$.
More generally for every commutative ring $\Lambda$, let $\Lambda^\et$ denote the étale hyperlocalization of the $\Lambda$-linear motivic cohomology spectrum and write $\Lambda^\et_\sS$ for its inverse image to any \dA stack $\sS$ (along the structural morphism $\sS \to \Spec(\Z)$).
The resulting groups are called \emph{étale motivic Borel--Moore homology} and \emph{étale motivic cohomology} (or ``Lichtenbaum motivic cohomology''), respectively:
  \begin{equation*}
    \H^\BM_s(\sX/\sS, \Lambda^\et(r)),
    \quad
    \H^s(\sX, \Lambda^\et(r)).
  \end{equation*}
Rationally these give back the groups just defined above since $\Q_{\sS}$ already satisfies étale hyperdescent.
With finite coefficients $\Lambda = \Z/n\Z$ it follows from \rmkref{rmk:Gamma^BM as limit} and \cite[Thm.~4.5.2]{CisinskiDegliseEtale} that these agree with étale Borel--Moore homology and étale cohomology \cite{Laumon,Olsson,LaszloOlssonI}, over classical Artin stacks with $n$ invertible.
Taking $\Z_{\ell,\sS}^\wedge$ to be the $\ell$-adic completion of $\Z_{\sS}$ as in \cite[Subsect.~7.2]{CisinskiDegliseEtale}, for a prime $\ell$, we also recover $\ell$-adic Borel--Moore homology and cohomology, respectively.
\end{exam}

\begin{exam}\label{exam:MGL_et}
Let $\MGL$ denote Voevodsky's algebraic cobordism spectrum.
If $X$ is a smooth algebraic space over a perfect field $k$, then the cohomology groups $\H^{2n}(X, \MGL(n))$ are computed for $n\ge 0$ by the Nisnevich hypercohomology of a certain presheaf of spectra built out of finite quasi-smooth derived schemes over $X$ \cite{EHKSY3}.
If $k$ is of characteristic zero, then they are identified with Levine--Morel's algebraic cobordism $\Omega^n(X)$, and moreover the Borel--Moore homology groups $\H^\BM_{2n}(X, \MGL(n))$ are identified with $\Omega_n(X)$ for all $n\in\Z$, also for $X$ singular \cite{LevineMGL}.

Let $\MGL^\et$ denote the étale hyperlocalization of $\MGL$.
For a \dA stack $\sS$, let $\MGL^\et_\sS$ denote the inverse image along the structural morphism $\sS \to \Spec(\Z)$.
This gives étale algebraic cobordism and bordism groups for \dA stacks
  \begin{equation*}
    \H^\BM_s(\sX/\sS, \MGL^\et(r)),
    \quad
    \H^s(\sX, \MGL^\et(r)).
  \end{equation*}
If $\sX$ is smooth over a perfect field $k$, then $\H^{2n}(\sX, \MGL^\et(n))$ is computed using the construction of \rmkref{rmk:Gamma^BM as limit} by the same presheaf of spectra mentioned above (for $n\ge0$).

The rationalization $\MGL_\Q$ already satisfies étale hyperdescent ($\MGL_\Q \simeq \MGL^\et_\Q$) and is identified with
  \begin{equation*}
    \MGL_\Q \simeq \Q[c_1,c_2,\ldots],
  \end{equation*}
where $c_i$ is a generator of bidegree $(2i,i)$, by \cite{NaumannSpitzweckOstvaer}.
We thus define $\MGL_{\Q,\sS}$ as the inverse image of $\MGL_\Q$ for any \dA stack $\sS$.
There are canonical maps
  \begin{equation*}
    \H^\BM_s(\sX/\sS, \MGL^\et(r))
      \to \H^\BM_s(\sX/\sS, \MGL_\Q(r))
      \to \H^\BM_s(\sX/\sS, \Q(r))
  \end{equation*}
for all $\sX$ locally of finite type over $\sS$.
\end{exam}

\begin{exam}\label{exam:KGL_et}
Let $\KGL^\et_{\sS}$ denote the étale hyperlocalization of the algebraic K-theory spectrum.
Assuming that $\sS$ is a regular (classical) stack, such as the spectrum of $\Z$ or a field, the Borel--Moore homology represented by $\KGL^\et_{\sS}$ coincides with étale hypercohomology with coefficients in $G$-theory, and the proper covariance and smooth Gysin maps are compatible with the respective intrinsic operations in $G$-theory \cite[Cor.~3.3.7]{JinG}.
Note that in this case the formula of \rmkref{rmk:Gamma^BM as limit} simplifies since there are Bott periodicity isomorphisms
  \begin{equation*}
    \KGL(n)[2n] \simeq \KGL
  \end{equation*}
for all $n\in\Z$.
\end{exam}

\begin{rmk}\label{rmk:BM for asps}
If we restrict to derived schemes or algebraic spaces, then we are allowed to take coefficients that do not satisfy étale descent, such as the \emph{integral} motivic cohomology spectrum $\bZ$ or the algebraic cobordism spectrum $\MGL$.
Indeed, for derived algebraic spaces the formalism of six operations is already available before imposing étale descent (see \ssecref{ssec:sixops/asp}).
The basic operations discussed in the next section will also carry over to that setting.
Moreover, the fundamental class can still be defined at least for \emph{smoothable} quasi-smooth morphisms (\varntref{varnt:fund for asps}).
\end{rmk}

\subsection{Basic operations}
\label{ssec:BM/operations}

The formalism of six operations (see Appendix~\ref{sec:sixops}) immediately yields the following structure on Borel--Moore homology groups.
Here $\sF$ is any coefficient defined over $\sS$, though for simplicity we assume that $\sF$ is \emph{multiplicative} (a motivic ring spectrum) and \emph{oriented}\footnotemark.
\footnotetext{This essentially amounts to admitting a theory of Chern classes.
The constructions also work for non-oriented spectra \cite{DegliseJinKhan}, but are more notationally complex due to the necessity of grading by K-theory classes instead of just pairs of integers.
We are only interested in oriented examples here.}
In particular there is a unit element
  \begin{equation*}
    1 \in \H^\BM_{0}(\sX/\sX, \sF) = \H^0(\sX, \sF)
  \end{equation*}
induced by the unit $\eta_\sF : \un_\sS \to \sF$.

\sssec{Proper direct image}\label{sssec:BM/proper}
If $f : \sX \to \sY$ is a representable proper morphism of \dA stacks locally of finite type over $\sS$, then there are functorial direct image homomorphisms
  \begin{equation*}
    f_* : \H^\BM_s(\sX/\sS, \sF(r)) \to \H^\BM_s(\sY/\sS, \sF(r)).
  \end{equation*}
These are induced by the co-unit $f_*f^! = f_!f^! \to \id$.
If $\sF$ satisfies h-descent, e.g., $\sF$ is $\Q$ or $\MGL_\Q$, then by \thmref{thm:f_!=f_* for X DM} this extends to arbitrary proper morphisms $f : \sX \to \sY$ as long as $\sX$ and $\sY$ are Deligne--Mumford (see \examref{exam:finite parametrization} for some milder assumptions that work).

\sssec{Smooth contravariance}\label{sssec:BM/Gysin}
If $f : \sX \to \sY$ is a smooth morphism of relative dimension $d$ between \dA stacks locally of finite type over $\sS$, then there are functorial Gysin homomorphisms
  \begin{equation*}
    f^! : \H^\BM_s(\sY/\sS, \sF(r)) \to \H^\BM_{s+2d}(\sX/\sS, \sF(r+d)).
  \end{equation*}
These are compatible with proper direct images by a base change formula.
They are induced by the co-trace transformation $\id \to f_*\Sigma^{-\sL_{\sX/\sY}}f^!$, right transpose of the purity equivalence $\Sigma^{\sL_{\sX/\sY}}f^* = f^!$ (\thmref{thm:sixops/stacks/purity}).

\sssec{Change of base}\label{sssec:BM/base change}
If $f : \sT \to \sS$ is a morphism of \dA stacks and $\sX$ is a \dA stack locally of finite type over $S$, then there are change of base homomorphisms
  \begin{equation*}
    f^* : \H^\BM_s(\sX/\sS, \sF(r)) \to \H^\BM_{s}(\sX_\sT/\sT, \sF(r)),
  \end{equation*}
where $\sX_\sT = \sX \fibprodR_\sS \sT$ is the derived fibred product.
More generally, for any commutative square
  \begin{equation*}
    \begin{tikzcd}
      \sY \ar{r}\ar{d}\ar[phantom]{rd}{\scriptstyle\Delta}
        & \sT \ar{d}{f}
      \\
      \sX \ar{r}
        & \sS
    \end{tikzcd}
  \end{equation*}
which is cartesian on underlying classical stacks, there are homomorphisms
  \begin{equation*}
    f^*_\Delta : \H^\BM_s(\sX/\sS, \sF(r)) \to \H^\BM_{s}(\sY/\sT, \sF(r)).
  \end{equation*}
These are induced by the unit map $\id \to f_*f^*$ and the base change formula (\thmref{thm:sixops/stacks/main}).

\begin{rmk}
Note that $f^*$ always denotes contravariant functoriality in the \emph{base} (change of base homomorphisms, \ref{sssec:BM/base change}), while $f^!$ denotes contravariant functoriality in the \emph{source} (Gysin homomorphisms, \ref{sssec:BM/Gysin}).
Potentially the notation also clashes with that of the six operations (Appendix~\ref{sec:sixops}), but there should be no risk of confusion.
\end{rmk}

\sssec{Top Chern class}
Let $\sE$ be a finite locally free sheaf of rank $r$ on a \dA stack $\sX$ over $\sS$.
Then there is a top Chern class (Euler class)
  \begin{equation*}
    c_r(\sE) \in \H^{2r}(\sX, \sF(r)).
  \end{equation*}
This is induced by the Euler transformation $\id \to \Sigma^{\sE}$ (\constrref{constr:eul}).
There is a general theory of Chern classes $c_i(\sE)$ (when $\sF$ is oriented), as in \cite[Sect.~2.1]{DegliseOrientation}, but we will not need it here.

\sssec{Composition product}\label{sssec:BM/composition product}
Given a \dA stack $\sT$ locally of finite type over $\sS$ and a \dA stack $\sX$ locally of finite type over $\sT$, there is a pairing
  \begin{equation*}
    \circ
      : \H^\BM_s(\sX/\sT, \sF(r)) \otimes \H^\BM_{s'}(\sT/\sS, \sF(r'))
      \to \H^\BM_{s+s'}(\sX/\sS, \sF(r+r')).
  \end{equation*}
This comes from the multiplication map $m : \sF \otimes \sF \to \sF$, see \cite[2.2.7(4)]{DegliseJinKhan} for details.

Special cases of the composition product are cap and cup products:

\sssec{Cap product}\label{sssec:BM/cap product}
Given a \dA stack $\sX$ locally of finite type over $\sS$, there is a pairing
  \begin{equation}
    \cap
      : \H^s(\sX, \sF(r)) \otimes \H^\BM_{s'}(\sX/\sS, \sF(r'))
      \to \H^\BM_{s'-s}(\sX/\sS, \sF(r'-r)).
  \end{equation}

\sssec{Cup product}\label{sssec:BM/cup product}
Given a \dA stack $\sX$ over $\sS$, there is a pairing
  \begin{equation}
    \cup
      : \H^s(\sX, \sF(r)) \otimes \H^{s'}(\sX, \sF(r'))
      \to \H^{s+s'}(\sX, \sF(r+r')).
  \end{equation}

From now on, whenever we consider a Borel--Moore homology group $\H^\BM_s(\sX/\sS, \sF(r))$, we will implicitly assume that $\sX$ is locally of finite type over $\sS$ (so that the exceptional inverse image functor $f^!$ exists, see \ssecref{ssec:sixops/stack}).

\subsection{Basic compatibilities}
\label{ssec:BM/compatibilities}

The operations on Borel--Moore homology are subject to the following compatibilities, direct analogues of the axioms of a bivariant theory in the sense of Fulton--MacPherson \cite[Sect.~2.2]{FultonMacPherson}.

\sssec{Change of base and composition product}
\label{sssec:BM/compatibilities/base change + composition}

Suppose given a commutative diagram
  \begin{equation*}
    \begin{tikzcd}
      \sX_\sT \ar{r}\ar{d}
        & \sX \ar{d}
      \\
      \sY_\sT \ar{r}{g}\ar{d}
        & \sY \ar{d}
      \\
      \sT \ar{r}{f}
        & \sS
    \end{tikzcd}
  \end{equation*}
where the squares are cartesian.
Then for classes $\alpha \in \H^\BM_r(\sX/\sY, \sF(s))$, $\beta \in \H^\BM_{r'}(\sY/\sS, \sF(s'))$, we have
  \begin{equation*}
    f^*(\alpha \circ \beta) = g^*(\alpha) \circ f^*(\beta)
  \end{equation*}
in $\H^\BM_{s+s'}(\sX_\sT/\sT, \sF(r+r'))$.

\sssec{Change of base and direct image}
\label{sssec:BM/compatibilities/base change + direct image}

Suppose given a commutative diagram
  \begin{equation*}
    \begin{tikzcd}
      \sX_\sT \ar{r}\ar{d}{h'}
        & \sX \ar{d}{h}
      \\
      \sY_\sT \ar{r}\ar{d}
        & \sY \ar{d}
      \\
      \sT \ar{r}{f}
        & \sS
    \end{tikzcd}
  \end{equation*}
where the squares are cartesian.
Then for any class $\alpha \in \H^\BM_r(\sX/\sS, \sF(s))$, we have
  \begin{equation*}
    f^*h_*(\alpha) = h'_*f^*(\alpha)
  \end{equation*}
in $\H^\BM_{s}(\sY_\sT/\sT, \sF(r))$.

\sssec{Direct image and composition product (on the right)}
\label{sssec:BM/compatibilities/direct image + composition (right)}

Suppose given a commutative diagram
  \begin{equation*}
    \begin{tikzcd}
      \sX \ar{rr}{f}\ar{rd}
        &
        & \sY \ar{ld}
      \\
        & \sS \ar{r}
        & \sT
    \end{tikzcd}
  \end{equation*}
with $f$ representable and proper.
Then for classes $\alpha \in \H^\BM_s(\sX/\sS, \sF(r))$, $\beta \in \H^\BM_{s'}(\sS/\sT, \sF(r'))$, we have
  \begin{equation*}
    f_*(\alpha) \circ \beta = f_*(\alpha \circ \beta)
  \end{equation*}
in $\H^\BM_{s+s'}(\sY/\sT, \sF(r+r'))$.

\sssec{Direct image and composition product (on the left)}
\label{sssec:BM/compatibilities/direct image + composition (left)}

Suppose given a commutative diagram
  \begin{equation*}
    \begin{tikzcd}
      \sX' \ar{rr}{g}\ar{d}
        &
        & \sY' \ar{d}
      \\
      \sX \ar{rr}{f}\ar{rd}
        &
        & \sY \ar{ld}
      \\
        & \sS
        &
    \end{tikzcd}
  \end{equation*}
where the square is cartesian.
Then for classes $\alpha \in \H^\BM_s(\sX/\sS, \sF(r))$ and $\beta \in \H^\BM_{s'}(\sY'/\sY, \sF(r'))$, we have
  \begin{equation*}
    \beta \circ f_*(\alpha) = g_*(f^*(\beta) \circ \alpha)
  \end{equation*}
in $\H^\BM_{s+s'}(\sY'/\sS, \sF(r+r'))$.

\subsection{Properties}
\label{ssec:BM/properties}

The following two statements follow immediately from \thmref{thm:sixops/stacks/localization}.

\begin{thm}[Localization]
Let $i : \sZ \to \sX$ be a closed immersion of \dA stacks over $\sS$, with open complement $j : \sU \to \sX$.
Then for every integer $r$ there is a long exact sequence
  \begin{multline*}
    \cdots
      \xrightarrow{\partial} \H^\BM_{s+1}(\sZ/\sS, \sF(r))
      \xrightarrow{i_*} \H^\BM_{s+1}(\sX/\sS, \sF(r))
      \xrightarrow{j^!} \H^\BM_{s+1}(\sU/\sS, \sF(r))
    \\
      \xrightarrow{\partial} \H^\BM_{s}(\sZ/\sS, \sF(r))
      \xrightarrow{i_*} \H^\BM_{s}(\sX/\sS, \sF(r))
      \xrightarrow{j^!} \cdots
  \end{multline*}
\end{thm}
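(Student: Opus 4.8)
The plan is to deduce the localization sequence from the corresponding localization triangle in $\SHet(\sS)$, namely the distinguished triangle relating $i_*i^!$, the identity functor, and $j_*j^*$ (equivalently $j_!j^!$ for the open immersion $j$), which is part of \thmref{thm:sixops/stacks/localization}. First I would recall the setup: write $p : \sX \to \sS$ for the structural morphism, so that $p i : \sZ \to \sS$ and $p j : \sU \to \sS$ are the structural morphisms of $\sZ$ and $\sU$. Applying $p_*$ and then $(-)^!\sF$ appropriately, the localization triangle $i_*i^! \sG \to \sG \to j_*j^*\sG$ for $\sG = p^!\sF$ becomes, after pushing forward along $p_*$ and using $p_* i_* = (pi)_*$, $p_* j_* = (pj)_*$ together with the identifications $i^! p^! = (pi)^!$ and $j^* p^! = j^! p^! = (pj)^!$ (the latter since $j$ is an open immersion, hence étale, so $j^* \simeq j^!$ up to the trivial twist), a distinguished triangle
  \begin{equation*}
    (pi)_*(pi)^!\sF \to p_* p^! \sF \to (pj)_*(pj)^!\sF
  \end{equation*}
in $\SHet(\sS)$.

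Next I would apply the functor $\Hom_{\SHet(\sS)}(\un_\sS(r)[s], -)$, or better the mapping-spectrum functor $\Maps_{\SHet(\sS)}(\un_\sS(r), -)$, to this triangle. Since $\SHet(\sS)$ is a stable \inftyCat, a distinguished triangle induces a long exact sequence of homotopy groups after applying any such corepresentable functor; unwinding \defnref{defn:BM}, the three terms of the triangle compute $\H^\BM_*(\sZ/\sS, \sF(r))$, $\H^\BM_*(\sX/\sS, \sF(r))$, and $\H^\BM_*(\sU/\sS, \sF(r))$ respectively, and the resulting long exact sequence is exactly the one asserted. It remains only to identify the maps: the map $\H^\BM_*(\sZ/\sS) \to \H^\BM_*(\sX/\sS)$ induced by $i_*i^! \to \id$ is by definition the proper pushforward $i_*$ of \ref{sssec:BM/proper} (note $i$ is a representable proper morphism, being a closed immersion), and the map $\H^\BM_*(\sX/\sS) \to \H^\BM_*(\sU/\sS)$ induced by $\id \to j_*j^*$ is, after the identification $j^* \simeq j^!$, precisely the smooth Gysin map $j^!$ of \ref{sssec:BM/Gysin} for the (étale, hence smooth of relative dimension $0$) open immersion $j$. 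The connecting homomorphism $\partial$ is then the one coming from the triangle.

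The only genuinely delicate point — and the one I would want to state carefully rather than wave at — is the compatibility of the identifications: that the boundary map of the triangle, transported through the canonical isomorphisms above, really does land in $\H^\BM_s(\sZ/\sS, \sF(r))$ with the expected sign and degree shift, and that $j^*p^! \simeq (pj)^!$ is the \emph{same} identification used implicitly in defining the Gysin map $j^!$ in \ref{sssec:BM/Gysin}. This is essentially a bookkeeping matter about coherences in the six-operations package, but since everything in sight is already assembled in Appendix~\ref{sec:sixops} and \thmref{thm:sixops/stacks/localization} is invoked as the input, the verification reduces to matching definitions; I do not expect any substantive obstacle, merely the need for care with indices. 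I would therefore keep the proof short: cite \thmref{thm:sixops/stacks/localization}, apply $p_*$ and then $\Maps_{\SHet(\sS)}(\un_\sS(r), -)$, and take the long exact sequence on homotopy groups.
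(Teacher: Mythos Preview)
Your proposal is correct and matches the paper's approach exactly: the paper states that this theorem ``follows immediately from \thmref{thm:sixops/stacks/localization}'' without further proof, and your argument spells out precisely how that deduction goes (apply the localization triangle to $p^!\sF$, push forward along $p_*$, and take the long exact sequence of homotopy groups of the mapping spectrum). Your identification of the maps with the proper pushforward $i_*$ and the smooth Gysin map $j^!$ is also the intended one.
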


\begin{thm}[Derived invariance]\label{thm:BM/nil-invariance}
Let $\sX$ be a \dA stack over $\sS$.

\noindent{\em(i)}
Let $i_\sS : \sS_\cl \to \sS$ denote the inclusion of the underlying classical stack.
Then the change of base homomorphisms
  \begin{equation*}
    i_\sS^* : \H^\BM_s(\sX/\sS, \sF(r)) \to \H^\BM_s(\sX\fibprodR_\sS \sS_\cl/\sS_\cl, \sF(r))
  \end{equation*}
are bijective for all $r,s\in\Z$.

\noindent{\em(ii)}
Let $i_\sX : \sX_\cl \to \sX$ denote the inclusion of the underlying classical stack.
Then the direct image homomorphisms
  \begin{equation*}
    (i_\sX)_* : \H^\BM_s(\sX_\cl/\sS, \sF(r)) \to \H^\BM_s(\sX/\sS, \sF(r))
  \end{equation*}
are bijective for all $r,s\in\Z$.
\end{thm}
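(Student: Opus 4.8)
The plan is to deduce both statements from a single consequence of the localization theorem \thmref{thm:sixops/stacks/localization}: a closed immersion $i : \sZ \to \sX$ of \dA stacks whose open complement $j : \sU \to \sX$ is empty (a \emph{nil-immersion}, such as the inclusion $i_\sX : \sX_\cl \to \sX$ of the classical truncation) induces equivalences. Indeed $j^* = 0$, so the localization cofiber sequences $j_!j^* \to \id \to i_*i^*$ and $i_*i^! \to \id \to j_*j^*$ of \thmref{thm:sixops/stacks/localization} collapse to equivalences $\id \xrightarrow{\,\sim\,} i_*i^*$ and $i_*i^! \xrightarrow{\,\sim\,} \id$; since also $i^*i_* \simeq \id$ (full faithfulness of $i_*$), it follows that $i_*$ is an equivalence and $i^* \simeq i_*^{-1} \simeq i^!$.

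Part (ii) is then immediate. Writing $f : \sX \to \sS$ for the structural morphism, so that $f\circ i_\sX : \sX_\cl \to \sS$ computes the source of $(i_\sX)_*$, the homomorphism $(i_\sX)_*$ is by construction (\sssecref{sssec:BM/proper}) the effect on $\Hom_{\SHet(\sS)}(\un_\sS(r)[s],-)$ of the map $f_*(i_\sX)_*(i_\sX)^!(f^!\sF) \to f_*(f^!\sF)$ induced by the co-unit $(i_\sX)_!(i_\sX)^! = (i_\sX)_*(i_\sX)^! \to \id$ on $\SHet(\sX)$. This co-unit is an equivalence by the first step, hence so is that map, hence $(i_\sX)_*$ is bijective.

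For part (i) I would form the homotopy cartesian square with $\sX' := \sX\fibprodR_\sS\sS_\cl$, structural morphism $f' : \sX' \to \sS_\cl$, and projection $i_{\sX'} : \sX' \to \sX$. As a base change of $i_\sS$, the map $i_{\sX'}$ is a closed immersion, and it induces an isomorphism $(\sX')_\cl \xrightarrow{\,\sim\,} \sX_\cl$ on classical truncations, hence has empty open complement; so the first step applies to both $i_\sS$ and $i_{\sX'}$. By construction (\sssecref{sssec:BM/base change}), $i_\sS^*$ is assembled from the unit $\id \to (i_\sS)_*(i_\sS)^*$ together with the exchange transformations $(i_\sS)^*f_* \to f'_*(i_{\sX'})^*$ and $(i_{\sX'})^*f^! \to (f')^!(i_\sS)^*$. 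The unit is an equivalence by the first step. The first exchange transformation is an equivalence because $f\circ i_{\sX'} = i_\sS\circ f'$ gives $f_*(i_{\sX'})_* \simeq (i_\sS)_*f'_*$, which rearranges to $(i_\sS)^*f_* \simeq f'_*(i_{\sX'})^*$ after inverting $(i_\sS)_*$ and $(i_{\sX'})_*$; the second is an equivalence because $(i_{\sX'})^*f^! \simeq (i_{\sX'})^!f^! \simeq (f\circ i_{\sX'})^! = (i_\sS\circ f')^! \simeq (f')^!(i_\sS)^! \simeq (f')^!(i_\sS)^*$, using $i^* \simeq i^!$ for the nil-immersions $i_{\sX'}$ and $i_\sS$. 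Hence $i_\sS^*(f_*f^!\sF) \simeq f'_*(f')^!(i_\sS^*\sF)$, and applying $\Hom_{\SHet(\sS_\cl)}(\un_{\sS_\cl}(r)[s],-)$ --- using that $i_\sS^*$ is an equivalence carrying $\un_\sS$ to $\un_{\sS_\cl}$ --- yields the bijectivity of $i_\sS^*$.

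The only step requiring real care, and thus the main obstacle (such as it is), is to check that the abstract equivalences produced above are literally the $2$-cells underlying the homomorphisms $(i_\sX)_*$ and $i_\sS^*$ as defined in \sssecref{sssec:BM/proper} and \sssecref{sssec:BM/base change} --- i.e. that once the unit, co-unit and exchange $2$-cells of a nil-immersion become invertible they agree with the tautological identifications appearing in those definitions. This is a compatibility check against the gluing data of \thmref{thm:sixops/stacks/localization} and presents no essential difficulty.
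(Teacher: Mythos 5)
Your proof is correct and takes the same route as the paper, which simply observes that both assertions ``follow immediately from Theorem~\ref{thm:sixops/stacks/localization}'': since $i_\sX$ and $i_\sS$ are closed immersions inducing isomorphisms on underlying reduced classical stacks, the localization theorem makes $(i_\sX)_*$ and $(i_\sS)_*$ equivalences, which is exactly what you use to invert the relevant unit, co-unit, and exchange $2$-cells. Your closing paragraph raising the coherence check is a reasonable note of caution, but the homomorphisms in \sssecref{sssec:BM/proper} and \sssecref{sssec:BM/base change} are by construction induced by precisely those $2$-cells, so once each one is shown invertible there is nothing further to match up.
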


\begin{prop}[Homotopy invariance]\label{prop:BM/homotopy}
Let $\sX$ be a \dA stack over $\sS$.
For a perfect complex $\sE$ on $\sX$ of Tor-amplitude $[-k,1]$, where $k\ge -1$, denote by $\pi : \bV_\sX(\sE[-1]) \to \sX$ the associated vector bundle stack.
Then for every $r,s\in\bZ$ there is a canonical isomorphism
  \begin{equation*}
    \pi^! : \H^\BM_s(\sX/\sS, \sF(r)) \to \H^\BM_{s-2d}(\bV_\sX(\sE[-1])/\sS, \sF(r-d)),
  \end{equation*}
where $d$ is the virtual rank of $\sE$.
\end{prop}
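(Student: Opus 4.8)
The plan is to reduce the statement to the case of an honest vector bundle over a scheme, where homotopy invariance is already built into the construction of $\SHet$ and its six operations (it is one of the defining properties of the motivic stable homotopy category). First I would treat the case where $\sX = X$ is a scheme and $\sE$ is a finite locally free sheaf placed in degree $0$, so that $\bV_X(\sE)\to X$ is an ordinary vector bundle of rank $d$. Here $\pi$ is smooth of relative dimension $d$, so the Gysin map $\pi^!$ of \ref{sssec:BM/Gysin} is defined, and its invertibility is precisely $\A^1$-homotopy invariance for the total space of a vector bundle, which holds in $\SHet(S)$ for any coefficient $\sF$; concretely it follows from the purity equivalence $\Sigma^{\sL_{X/\dots}}\pi^* \simeq \pi^!$ together with the fact that $\pi^*$ is fully faithful on the relevant objects after applying $\pi_*$, i.e. the unit $\id \to \pi_*\pi^*$ is an isomorphism for vector bundles. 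The Tate twist and shift bookkeeping comes out as stated because $\sL_{\bV_X(\sE)/X} \simeq \pi^*\sE$ has rank $d$.

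Next I would bootstrap to a perfect complex $\sE$ on a scheme $X$ of Tor-amplitude $[-k,1]$. Such an $\sE$ is, Zariski-locally, a two-term-or-longer complex of vector bundles; the associated $\bV_X(\sE[-1])$ is a smooth $(k+1)$-Artin stack over $X$, and by \ref{item:normal bundle/presentation}-style descent (or directly from the definition of vector bundle stacks as iterated stack quotients/classifying stacks of complexes of vector bundles) it is an iterated affine-bundle/classifying-stack over $X$. One then filters $\sE$ by a tower whose graded pieces are shifts of vector bundles $\sE_i[n_i]$, so that $\bV_X(\sE[-1])$ is built as a tower of torsors under (classifying stacks of) vector bundles. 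Each stage is either a vector bundle torsor — handled by the scheme case above and the localization/base-change compatibilities of \ref{ssec:BM/compatibilities} — or the classifying stack $B_X(\sF)$ of a vector bundle $\sF$, for which $\pi^!$ is an isomorphism by the same purity argument applied to the smooth (negative-dimensional) morphism $B_X(\sF)\to X$; its relative cotangent complex is $\sF^\vee[1]$, again accounting for the degree shift. Composing the isomorphisms up the tower, using functoriality of Gysin maps, gives the result for $\sE$ over a scheme.

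Finally I would pass from schemes to \dA stacks $\sX$. Choose a smooth atlas $X\to\sX$; by \ref{rmk:Gamma^BM as limit} the Borel–Moore spectrum $\RGamma^\BM(\sX/\sS,\sF(r))$ is the totalization \eqref{eq:Gamma^BM as totalization} of the cosimplicial spectrum built from the \v{C}ech nerve $X_\bullet = \Cech(X/\sX)_\bullet$. Pulling back $\sE$ along each $X_n \to \sX$, and using that $\bV_{(-)}(\sE[-1])$ commutes with base change and hence $\bV_{X_\bullet}(\sE[-1]) = \bV_\sX(\sE[-1])\fibprodR_\sX X_\bullet$ is the \v{C}ech nerve of the atlas $\bV_X(\sE|_X[-1]) \to \bV_\sX(\sE[-1])$, the totalization computing $\RGamma^\BM(\bV_\sX(\sE[-1])/\sS,\sF(r-d))[2d]$ is the totalization of the spectra $\RGamma^\BM(\bV_{X_n}(\sE[-1])/\sS,\dots)$. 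The Gysin maps $\pi_n^!$ assemble into a map of cosimplicial spectra — this requires the compatibility of Gysin maps with the smooth base-change maps in the cosimplicial structure, which is part of the bivariant formalism (\ref{ssec:BM/compatibilities}) — and each $\pi_n^!$ is an isomorphism by the scheme case. Passing to totalizations, $\pi^!$ is an isomorphism.

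I expect the main obstacle to be the middle step: reducing $\bV_\sX(\sE[-1])$ for a general perfect complex of Tor-amplitude $[-k,1]$ to an iterated tower of (classifying stacks of) vector bundles in a way that is functorial enough to splice the Gysin isomorphisms together. One must either argue Zariski-locally and descend — which is delicate for the higher-Artin layers, where one is descending along a \v{C}ech nerve in a stacky setting — or give a clean global argument via the canonical Postnikov-type filtration of $\sE[-1]$ and the observation that $\bV_{\sX}(-)$ sends cofiber sequences of perfect complexes to fiber sequences of vector bundle stacks. The purity input itself (\thmref{thm:sixops/stacks/purity}) does the real work at each stage; the bookkeeping of twists and shifts, and the verification that all the isomorphisms are the named map $\pi^!$ rather than merely \emph{some} isomorphism, is routine but must be done carefully.
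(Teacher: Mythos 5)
Your outline is correct in spirit, but it re-derives a result that the paper has already established in the appendix, and the paper's own proof of this proposition is a one-liner that simply cites those results. Concretely, the paper defines the map $\pi^!$ as the composite
\begin{equation*}
  f_*f^!(\sF)
    \xrightarrow{\mathrm{unit}} f_*\pi_*\pi^*f^!(\sF)
    \xrightarrow{\mathrm{pur}_\pi} f_*\pi_*\Sigma^{-\sL_\pi}\pi^!f^!(\sF)
    = f_*\pi_*\Sigma^{\pi^*\sE}\pi^!f^!(\sF),
\end{equation*}
and observes that this is invertible by \propref{prop:sixops/stacks/homotopy invariance} (invertibility of the unit $\id\to\pi_*\pi^*$ for vector bundle stacks) together with \thmref{thm:sixops/stacks/purity} (purity for smooth morphisms of \dA stacks). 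Everything you propose in steps 1--3 is, in effect, an attempt to prove \propref{prop:sixops/stacks/homotopy invariance} from scratch, so you should be aware that this is where the actual content lives.

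Comparing your tower argument with the paper's proof of \propref{prop:sixops/stacks/homotopy invariance}: the paper runs the induction on Tor-amplitude \emph{directly at the level of stacks}, handling the $[1,1]$ case by descent to schemes, the $[0,0]$ case (classifying stack $B\bV_\sX(\sE)$) by a clean base-change argument on the square
\begin{equation*}
\begin{tikzcd}
  \bV_\sX(\sE)\ar{r}\ar{d} & \sX\ar{d}{\sigma}\\
  \sX\ar{r}{\sigma} & \bV_\sX(\sE[-1]),
\end{tikzcd}
\end{equation*}
and the general $[-k,1]$ case by choosing a local surjection $\sE_0[-k]\twoheadrightarrow\sE$ and inducting on $k$. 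Your plan instead tries to first establish the full statement for schemes and then descend along \v{C}ech nerves of a smooth atlas of $\sX$. This is where the delicacy you flag yourself becomes a genuine obstruction: even when the base $X$ is a scheme, $\bV_X(\sE[-1])$ is a $(k+1)$-Artin stack for $\sE$ of Tor-amplitude $[-k,1]$, so ``the scheme case'' in your step 2 already requires the full machinery of six operations on higher stacks and its homotopy invariance; there is no clean layer where one works only with schemes. You would also need to verify that the various Gysin isomorphisms across the tower filtration and across the cosimplicial diagram all cohere into the \emph{single} map $\pi^!$, which is exactly the kind of homotopy-coherence bookkeeping that the paper sidesteps by proving the $\infty$-categorical statement (invertibility of a natural transformation of functors) once and for all in the appendix. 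So: no fatal error, but your proposal is doing the work of \propref{prop:sixops/stacks/homotopy invariance} by a less efficient route, and if you tried to carry it out you would be pushed back toward the paper's formulation anyway.
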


\begin{proof}
The map is induced by the natural transformation  
  \begin{equation*}
    f_*f^!(\sF)
      \xrightarrow{\mrm{unit}} f_*\pi_*\pi^*f^!(\sF)
      \xrightarrow{\mrm{pur}_\pi} f_*\pi_*\Sigma^{-\sL_{\pi}}\pi^!f^!(\sF)
      = f_*\pi_*\Sigma^{\pi^*(\sE)}\pi^!f^!(\sF),
  \end{equation*}
which is invertible by Props.~\ref{prop:sixops/stacks/homotopy invariance} and \ref{thm:sixops/stacks/purity}.
\end{proof}

\begin{defn}\label{defn:fund smooth}
Let $\sX$ be a \dA stack over $\sS$.
If $\sX$ is smooth of relative dimension $d$, then there is a relative \emph{fundamental class}
  \begin{equation*}
    [\sX/\sS] \in \H^\BM_{2d}(\sX/\sS, \sF(d))
  \end{equation*}
defined as the image of the unit by the Gysin map $f^!$ \sssecref{sssec:BM/Gysin}.
More explicitly, this class is induced by the morphism
  \begin{equation*}
    \un_\sS \to f_*\Sigma^{-\sL_{\sX/\sS}}f^!(\sF) = f_*f^!(\sF) (-d)[-2d]
  \end{equation*}
coming by adjunction from the purity isomorphism $f^! = \Sigma^{\sL_{\sX/\sS}} f^*$ (\thmref{thm:sixops/stacks/purity}), where $f : \sX \to \sS$ is the structural morphism.
\end{defn}

\begin{rmk}\label{rmk:fund smooth is classical}
For $\sX$ smooth over $\sS$ as above, the fundamental class $[\sX/\sS]$ is ``classical'', in the sense that it is insensitive to the derived structure.
That is, under the canonical isomorphism (\thmref{thm:BM/nil-invariance})
  \begin{equation*}
    \H^\BM_{2d}(\sX/\sS, \sF(d)) \simeq \H^\BM_{2d}(\sX_\cl/\sS_\cl, \sF(d)),
  \end{equation*}
the class $[\sX/\sS]$ corresponds to $[\sX_\cl/\sS_\cl]$, the fundamental class of the morphism $\sX_\cl \to \sS_\cl$.
Note that this makes sense because the latter is again a smooth morphism of relative dimension $d$.
This is in contrast to the more general case of quasi-smooth morphisms (\secref{sec:fund}).
\end{rmk}

\begin{thm}[Poincaré duality]\label{thm:poincare}
Let $\sX$ be a smooth \dA stack over $\sS$.
Then cap product \sssecref{sssec:BM/cap product} with the fundamental class $[\sX/\sS]$ induces a canonical isomorphism
  \begin{equation*}
    \H^s(\sX, \sF(r))
      \xrightarrow{\cap [\sX/\sS]} \H^\BM_{2d-s}(\sX/\sS, \sF(d-r))
  \end{equation*}
for all $r,s,\in\Z$.
\end{thm}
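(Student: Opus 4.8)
The plan is to recognise Poincaré duality as a reformulation of the purity equivalence $f^{!}\simeq\Sigma^{\sL_{\sX/\sS}}f^{*}$ of \thmref{thm:sixops/stacks/purity}, where $f:\sX\to\sS$ denotes the structural morphism. First I would rewrite both sides using the $(f^{*},f_{*})$-adjunction: $\H^s(\sX,\sF(r))=\Hom_{\SHet(\sX)}(\un_\sX,f^{*}\sF(r)[s])$, whereas $\H^\BM_{2d-s}(\sX/\sS,\sF(d-r))=\Hom_{\SHet(\sX)}(\un_\sX(d-r)[2d-s],f^{!}\sF)$. Since $f$ is smooth of relative dimension $d$, the cotangent complex $\sL_{\sX/\sS}$ is concentrated in degree zero and locally free of rank $d$; since $\sF$ is oriented, the Thom isomorphism gives $\Sigma^{\sL_{\sX/\sS}}f^{*}\sF\simeq f^{*}\sF(d)[2d]$, and together with \thmref{thm:sixops/stacks/purity} this produces an equivalence $\psi:f^{*}\sF(d)[2d]\xrightarrow{\sim}f^{!}\sF$ in $\SHet(\sX)$ under which the two $\Hom$-groups become identified. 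The theorem then reduces to the claim that $\cap\,[\sX/\sS]$ is exactly this identification.

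Next I would reinterpret the fundamental class. Unwinding \defnref{defn:fund smooth}, the class $[\sX/\sS]\in\H^\BM_{2d}(\sX/\sS,\sF(d))$, viewed as a morphism $\un_\sX(d)[2d]\to f^{!}\sF$, is the composite of the unit $\eta:\un_\sX\to f^{*}\sF$ (representing $1\in\H^0(\sX,\sF)$), twisted by $(d)[2d]$, with $\psi$; in other words $[\sX/\sS]$ is the image of $1$ under $\psi$, i.e.\ the ``$\psi$-twisted unit'' of the $f^{*}\sF$-algebra $f^{!}\sF$. Here the module structure $f^{*}\sF\otimes f^{!}\sF\to f^{!}\sF$ is built from the projection-formula map $f^{*}\sF\otimes f^{!}\sF\to f^{!}(\sF\otimes\sF)$ and the multiplication $m:\sF\otimes\sF\to\sF$, and is intertwined by $\psi$ with the multiplication of the ring $f^{*}\sF$; this compatibility of purity with the projection formula is part of the purity package of \thmref{thm:sixops/stacks/purity}.

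Then I would unwind the cap product \sssecref{sssec:BM/cap product}, which is the composition product \sssecref{sssec:BM/composition product} with $\sT=\sX$ under the identification $\H^s(\sX,\sF(r))=\H^\BM_{-s}(\sX/\sX,\sF(-r))$: for $\alpha:\un_\sX\to f^{*}\sF(r)[s]$ representing a class in $\H^s(\sX,\sF(r))$, the class $\alpha\cap[\sX/\sS]$ is represented by
\[
  \un_\sX(d-r)[2d-s]\;\xrightarrow{\alpha\otimes[\sX/\sS]}\;f^{*}\sF\otimes f^{!}\sF\;\longrightarrow\;f^{!}(\sF\otimes\sF)\;\xrightarrow{f^{!}(m)}\;f^{!}\sF
\]
(Tate twists and shifts suppressed). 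Substituting the description of $[\sX/\sS]$ as the $\psi$-twisted unit, invoking the unit axiom for the ring $f^{*}\sF$, and using the compatibility of $\psi$ with the module structure, this composite collapses to $\psi$ applied to $\alpha$ (suitably retwisted). Hence, under the identifications of the first paragraph, $\cap\,[\sX/\sS]$ is the identity; being post-composition with the equivalence $\psi$, it is bijective for all $r,s\in\Z$.

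The step I expect to be the main obstacle is the last one: making the composition product — the most delicate of the bivariant operations — explicit enough to obtain the displayed ``tensor, then multiply'' description, and pinning down the compatibility of $\psi$ with the $f^{*}\sF$-module structure on $f^{!}\sF$ (equivalently, with the projection formula). A safe way to discharge this bookkeeping is a reduction to classical schemes: by \thmref{thm:BM/nil-invariance} one may assume $\sX$ classical, and — at least when $\sS=S$ is a scheme — \rmkref{rmk:Gamma^BM as limit} together with the base-change/composition compatibility reduces the statement, by smooth descent along an atlas of $\sX$, to the case of a scheme smooth over $S$, where Poincaré duality in this bivariant form is \cite{DegliseJinKhan}. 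What then remains genuinely new is only the compatibility of the fundamental class and the cap product with smooth descent and with the derived structure, which is supplied by the construction of the six operations in Appendix~\ref{sec:sixops}.
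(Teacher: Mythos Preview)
Your proposal is correct and follows the same approach as the paper: both argue that cap product with $[\sX/\sS]$ is, after unwinding definitions, precisely post-composition with the purity isomorphism $\psi : f^*\sF(d)[2d]\simeq f^!\sF$. The paper's proof is a one-line sketch that defers exactly the bookkeeping you spell out in your second and third paragraphs to the discussion after \cite[Def.~2.3.11]{DegliseJinKhan}; your backup descent reduction to schemes is therefore unnecessary.
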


\begin{proof}
Unraveling definitions, this follows from the fact that the morphism $\un_\sS \to f_*f^!(\sF) (-d)[-2d]$ defining $[\sX/\sS]$ is the ``right transpose'' of an isomorphism.
See the discussion after \cite[Def.~2.3.11]{DegliseJinKhan}.
\end{proof}

\section{Fundamental classes}
\label{sec:fund}

We develop some basic tools of intersection theory, namely the specialization and Gysin maps, for Borel--Moore homology of \dA stacks.
We follow the constructions of Déglise--Jin--Khan \cite{DegliseJinKhan} closely, the main difference being the introduction of the normal bundle stack to handle the cases of quasi-smooth closed immersions and smooth morphisms simultaneously.

\subsection{Construction}
\label{ssec:fund/construction}

Let $\sS$ be a \dA stack and fix a coefficient $\sF$ as in \secref{sec:BM}.
Let $f : \sX \to \sY$ be a quasi-smooth morphism of \dA stacks over $\sS$, say of relative virtual dimension $d$.
Denote by $N_{\sX/\sY}$ the normal bundle stack (\defnref{defn:normal bundle stack}).

\begin{constr}[Specialization map]\label{constr:fund/sp}
We define a specialization map
  \begin{equation}\label{eq:sp}
    \sp_{\sX/\sY}
      : \H^\BM_{s}(\sY/\sS, \sF(r))
      \to \H^\BM_{s}(N_{\sX/\sY}/\sS, \sF(r))
  \end{equation}
for all $r,s \in \Z$.
First, the localization long exact sequence associated to the closed immersion $\sY = \sY\times\{0\} \to \sY\times\A^1$ splits into short exact sequences
  \begin{equation*}
    0
      \to \H^\BM_{s+1}(\A^1_\sY/\sS, \sF(r))
      \to \H^\BM_{s+1}(\bG_{m,\sY}/\sS, \sF(r))
      \xrightarrow{\partial} \H^\BM_{s}(\sY/\sS, \sF(r))
      \to 0.
  \end{equation*}
The homomorphism $\partial$ admits a canonical section
  \begin{equation}
    \gamma_t : \H^\BM_{s}(\sY/\sS, \sF(r)) \to \H^\BM_{s+1}(\bG_{m,\sY}/\sS, \sF(r)),
  \end{equation}
see \cite[3.2.2]{DegliseJinKhan} for details.

Let $D_{\sX/\sY}$ be the deformation space (\ssecref{ssec:deformation}).
Let $i : N_{\sX/\sY} \to D_{\sX/\sY}$ denote the inclusion of the exceptional fibre and $j : \sY \times\bG_m \to D_{\sX/\sY}$ its complement.
The associated localization long exact sequence has boundary map
  \begin{equation*}
   \partial : \H^\BM_{s+1}(\bG_{m,\sY}/\sS, \sF(r))
      \to \H^\BM_{s}(N_{\sX/\sY}/\sS, \sF(r)),
  \end{equation*}
and we define \eqref{eq:sp} as the composite
  \begin{equation*}
    \H^\BM_{s}(\sY/\sS, \sF(r))
      \xrightarrow{\gamma_t} \H^\BM_{s+1}(\bG_{m,\sY}/\sS, \sF(r))
      \xrightarrow{\partial} \H^\BM_{s}(N_{\sX/\sY}/\sS, \sF(r)).
  \end{equation*}
\end{constr}

\begin{constr}[Gysin map]\label{constr:fund/gys}
We now construct the Gysin map
  \begin{equation}\label{eq:Gysin}
    f^! : \H^\BM_{s}(\sY/\sS, \sF(r))
      \to \H^\BM_{s+2d}(\sX/\sS, \sF(r+d)),
  \end{equation}
where $f : \sX \to \sY$ is as above.
Let $\pi : N_{\sX/\sY} \to \sX$ denote the projection.
The Gysin map \eqref{eq:Gysin} is the composite
  \begin{equation*}
    \H^\BM_{s}(\sY/\sS, \sF(r))
      \xrightarrow{\sp_{\sX/\sY}} \H^\BM_{s}(N_{\sX/\sY}/\sS, \sF(r))
      \xrightarrow{(\pi^!)^{-1}} \H^\BM_{s+2d}(\sX/\sS, \sF(r+d)).
  \end{equation*}
where $\pi^!$ is the isomorphism of \propref{prop:BM/homotopy}.
\end{constr}

\begin{constr}[Fundamental class]\label{constr:fund}
The (relative) \emph{fundamental class} of $f : \sX \to \sY$ is the class
  \begin{equation*}
    [\sX/\sY] := f^!(1) \in \H^\BM_{2d}(\sX/\sY, \sF(d))
  \end{equation*}
which is the image of $1 \in \H^\BM_{0}(\sY/\sY, \sF)$.
When $f$ is smooth, this is the fundamental class already defined (see before \thmref{thm:poincare}).
\end{constr}

The (relative) \emph{virtual} fundamental class is defined to be the unique class
  \begin{equation*}
    [\sX/\sY]^\vir \in \H^\BM_{2d}(\sX_\cl/\sY_\cl, \sF(d))
  \end{equation*}
corresponding to $[\sX/\sY]$ under the canonical isomorphisms of \thmref{thm:BM/nil-invariance}.

\begin{rmk}\label{rmk:Gysin/fund}
The Gysin map and fundamental class are essentially interchangeable data, as we can recover the former via the composition product \sssecref{sssec:BM/composition product} with $[\sX/\sY]$:
  \begin{equation*}
    f^!(x) = [\sX/\sY] \circ x \in \H^\BM_{s+2d}(\sX/\sS, \sF(r+d))
  \end{equation*}
for all $x \in \H^\BM_s(\sY/\sS, \sF(r))$.
\end{rmk}

\begin{rmk}[Purity transformation]\label{rmk:purity}
In terms of the six operations, the fundamental class can be interpreted as a canonical natural transformation
  \begin{equation}
    \pur_f : \Sigma^{\sL_{\sX/\sY}} f^* \to f^!
  \end{equation}
of functors $\SHet(\sY) \to \SHet(\sX)$, where $\Sigma^{\sL_{\sX/\sY}}$ is the operation defined in \eqref{eq:Sigma^E}.
Through the orientation of $\sF$, this induces a canonical isomorphism
  \begin{equation}\label{eq:pur_f(F)}
    f^*(\sF)(d)[2d] \simeq \Sigma^{\sL_{\sX/\sY}} f^*(\sF) \to f^!(\sF).
  \end{equation}
See \cite[Subsects.~2.5, 4.3]{DegliseJinKhan} for details on this perspective.
\end{rmk}

\begin{varnt}\label{varnt:fund for asps}
Let's restrict our attention to derived schemes or algebraic spaces.
As explained in \rmkref{rmk:BM for asps}, there is a well-behaved theory of Borel--Moore homology with coefficients in any $\sF$, not necessarily satisfying étale descent (such as the integral motivic cohomology or algebraic cobordism spectrum).
Following the constructions of \cite[\S 3]{DegliseJinKhan}, we can still define the fundamental class $[X/Y] \in \H^\BM_{2d}(X/Y, \sF(d))$ for \emph{smoothable} quasi-smooth morphisms $f : X \to Y$ (where $d = \dimvir(X/Y)$).

First let $i : Z \to X$ be a quasi-smooth closed immersion (or quasi-smooth unramified morphism \cite[\S 5.2]{KhanRydh}).
In this case the normal bundle stack $\sN_{Z/X}$ is a vector bundle (as opposed to a vector bundle stack), and Constructions~\ref{constr:fund/sp} and \ref{constr:fund/gys} only involve \da spaces.
Thus we get the fundamental class $[Z/X] \in \H^\BM_{2d}(Z/X, \sF(d))$, where $d=\dimvir(Z/X)$.
These fundamental classes also satisfy the properties asserted in the next section.

Now let $f : X \to Y$ be a smoothable quasi-smooth morphism of \da spaces, i.e., one that admits a global factorization
  \begin{equation*}
    X \xrightarrow{i} M \xrightarrow{p} Y
  \end{equation*}
with $p$ smooth and $i$ a (quasi-smooth) closed immersion.
Define the fundamental class $[X/Y] \in \H^\BM_{2d}(X/Y, \sF(d))$, where $d=\dimvir(X/Y)$, by
  \begin{equation*}
    [X/Y] = [X/M] \circ [M/Y].
  \end{equation*}
Exactly as in \cite[\S 3.3]{DegliseJinKhan}, one verifies that this is independent of the factorization and that the resulting system of fundamental classes still satisfies the properties stated in the next subsection.
\end{varnt}

\subsection{Properties}
\label{ssec:fund/properties}

We record the basic properties of the fundamental class.
These could equivalently be stated for the Gysin maps.

\begin{thm}[Functoriality]\label{thm:fund/functoriality}
Let $f : \sX \to \sY$ and $g : \sY \to \sZ$ be quasi-smooth morphisms of \dA stacks, of relative virtual dimensions $d$ and $e$, respectively.
Then we have
  \begin{equation*}
    [\sX/\sY] \circ [\sY/\sZ] = [\sX/\sZ]
  \end{equation*}
in $\H^\BM_{2d+2e}(\sX/\sZ, \sF(d+e))$.
\end{thm}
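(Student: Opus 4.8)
The plan is to reduce the functoriality of fundamental classes to the compatibility of specialization maps with deformation to the normal bundle stack, following the blueprint of \cite[\S 3.2]{DegliseJinKhan}. By \rmkref{rmk:Gysin/fund}, the asserted identity $[\sX/\sY]\circ[\sY/\sZ] = [\sX/\sZ]$ is equivalent to the statement that $g^!\circ f^! = (g\circ f)^!$ as maps on Borel--Moore homology. Since each Gysin map is, by \constrref{constr:fund/gys}, the composite of a specialization map with the inverse of a homotopy-invariance isomorphism $\pi^!$, the crux is to understand how the specialization maps $\sp_{\sX/\sY}$, $\sp_{\sY/\sZ}$, and $\sp_{\sX/\sZ}$ interact.

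The key geometric input is a \emph{double deformation space}. For the composite $\sX \xrightarrow{f} \sY \xrightarrow{g} \sZ$, one wants a $\sZ$-stack over $\A^1\times\A^1$ that in one corner restricts to $D_{\sX/\sZ}$, and along the other axis interpolates between $D_{\sX/\sY}$ (deforming $f$) and the deformation of the composite $\sX \to N_{\sX/\sY}\to$ (something built from $N_{\sY/\sZ}$). Concretely, I would use the description of $D_{\sX/\sY} = \Res_{\sY/\sY\times\A^1}(\sX)$ as a Weil restriction from \ssecref{ssec:deformation}, together with \thmref{thm:deformation/main}\itemref{item:deformation/main/base change} (stability under base change in the target), to build $D_{D_{\sX/\sY}/D_{\sY/\sZ}}$ or an analogous iterated construction over $\A^1\times\A^1$, and then identify its various fibres. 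The zero-section fibre computation will use \propref{prop:normal bundle} and the behaviour of the normal bundle stack under composition --- namely that for $\sX\xrightarrow{f}\sY\xrightarrow{g}\sZ$ quasi-smooth there is a cofibre sequence $f^*\sL_{\sY/\sZ}\to\sL_{\sX/\sZ}\to\sL_{\sX/\sY}$, hence a fibre sequence of vector bundle stacks $N_{\sX/\sY}\to N_{\sX/\sZ}\to f^*N_{\sY/\sZ}$ over $\sX$ --- from which the compatibility of the homotopy-invariance isomorphisms $\pi^!$ follows by transitivity of Gysin maps for the (smooth) projections of vector bundle stacks.

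With the double deformation space in hand, the proof proceeds in three steps. First, establish that $\sp_{\sY/\sZ}$ commutes with the specialization $\sp_{N_{\sX/\sY}/\text{--}}$ in the appropriate sense, by comparing the two boundary-map constructions on the two faces of the bideformation space and invoking excision/localization (\thmref{thm:sixops/stacks/localization}) in the motivic six-functor formalism extended to stacks (Appendix~\ref{sec:sixops}). Second, use the fibre sequence of normal bundle stacks above to match $\sp_{\sX/\sZ}$ with the composite $\sp_{\sX/\sY}\circ\sp_{\sY/\sZ}$ after the homotopy-invariance identifications; this is where one passes from specialization maps back to Gysin maps. Third, chase the unit class $1\in\H^\BM_0(\sZ/\sZ,\sF)$ through both sides. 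I expect the main obstacle to be the first step: setting up the bideformation space so that \emph{both} its restrictions are literally the single deformation spaces (not just abstractly equivalent), and verifying that the two $\A^1$-localization boundary maps it produces agree on the nose --- in the scheme case this is handled by an explicit model, but for general Artin stacks one must either descend along a smooth atlas using \propref{prop:normal bundle}\itemref{item:normal bundle/presentation} and the simplicial description of $D_{\sX/\sY}$, or argue intrinsically via the Weil-restriction description, and reconciling these requires care. A cleaner route, which I would try first, is to prove the purity-transformation identity $\pur_g\circ(g^*\pur_f) = \pur_{g\circ f}$ directly at the level of the natural transformations of \rmkref{rmk:purity}, since natural transformations between the six operations are often easier to compare than the induced maps on homology groups, and then deduce the homological statement formally; this reduces the problem to the additivity of the purity transformations under composition, which in turn rests on the same fibre sequence of cotangent complexes together with the bideformation space.
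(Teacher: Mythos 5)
Your proposal matches the paper's own proof, which consists of the single sentence ``Use the double deformation space as in \cite[Prop.~3.2.19]{DegliseJinKhan}'': the $\A^1\times\A^1$ deformation space, the cotangent-complex cofibre sequence $f^*\sL_{\sY/\sZ}\to\sL_{\sX/\sZ}\to\sL_{\sX/\sY}$ and its induced fibre sequence of normal bundle stacks, and the reduction to compatibility of specialization and homotopy-invariance maps are exactly the ingredients of that cited argument. One small remark: your ``cleaner alternative route'' of establishing $\pur_{g\circ f}=\pur_g\circ g^*\pur_f$ at the level of natural transformations is not actually a different argument --- \cite[Prop.~3.2.19]{DegliseJinKhan} is already formulated and proved at that level, so the two routes you describe collapse to the same proof.
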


Use the double deformation space as in \cite[Prop.~3.2.19]{DegliseJinKhan}.

\begin{thm}[Base change]\label{thm:fund/base change}
Suppose given a cartesian square of \dA stacks
  \begin{equation}
    \begin{tikzcd}
      \sX' \ar{r}{g}\ar{d}{p}
        & \sY' \ar{d}{q}
      \\
      \sX \ar{r}{f}
        & \sY
    \end{tikzcd}
  \end{equation}
over $\sS$, where $f$ is quasi-smooth.
Then there is an equality
  \begin{equation*}
    p^*[\sX/\sY] = [\sX'/\sY'] \in \H^\BM_{2d}(\sX'/\sY', \sF(d)),
  \end{equation*}
where $d$ is the relative virtual dimension of $f$ (and hence of $g$).
\end{thm}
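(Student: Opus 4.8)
The plan is to unwind the definition $[\sX/\sY] = f^!(1)$ from \constrref{constr:fund} and verify that every step of its construction commutes with the change-of-base homomorphism $p^*$. Here $p^*$ is the change-of-base homomorphism of \sssecref{sssec:BM/base change} for the base change $q \colon \sY' \to \sY$, acting on Borel--Moore homology of the source $\sX$ (which is locally of finite type over $\sY$); since the square is cartesian we have $\sX \fibprodR_\sY \sY' \simeq \sX'$, so its target is Borel--Moore homology over $\sY'$. By \constrref{constr:fund/gys}, $f^! = (\pi^!)^{-1} \circ \sp_{\sX/\sY}$, where $\pi \colon N_{\sX/\sY} \to \sX$ is the projection, $\sp_{\sX/\sY}$ the specialization map of \constrref{constr:fund/sp}, and $\pi^!$ the homotopy invariance isomorphism of \propref{prop:BM/homotopy}. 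The unit $1 \in \H^\BM_0(\sY/\sY, \sF)$ is induced by $\eta_\sF \colon \un \to \sF$ and is hence preserved by $p^*$, so it suffices to establish: (A) $p^*$ intertwines $\sp_{\sX/\sY}$ with $\sp_{\sX'/\sY'}$ under the identification $N_{\sX/\sY} \fibprodR_\sX \sX' \simeq N_{\sX'/\sY'}$ of \propref{prop:normal bundle}(i); and (B) $p^*$ is compatible with the homotopy invariance isomorphisms of \propref{prop:BM/homotopy} for the two normal bundle stacks.

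Statement (B) is quick. By the proof of \propref{prop:BM/homotopy}, $\pi^!$ is induced by the composite of the unit $\id \to \pi_*\pi^*$ with the purity equivalence $\pi^* \simeq \Sigma^{-\sL_{\pi}}\pi^!$; both the unit and the purity equivalence are stable under arbitrary derived base change by the base change formula for the six operations on \dA stacks (\thmref{thm:sixops/stacks/main}) and the base change compatibility of purity (\thmref{thm:sixops/stacks/purity}). Equivalently, (B) says that homotopy invariance for vector bundle stacks commutes with base change, which follows from these two facts.

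Statement (A) is the heart of the argument. By \constrref{constr:fund/sp}, $\sp_{\sX/\sY} = \partial \circ \gamma_t$, where $\gamma_t$ is the canonical section of the boundary map of the localization sequence for $\sY \times \{0\} \hookrightarrow \A^1_\sY$ and $\partial$ is the boundary map of the localization sequence for $N_{\sX/\sY} \hookrightarrow D_{\sX/\sY} \hookleftarrow \sY \times \bG_m$. The map $\gamma_t$ is built purely from the localization triangle of the zero section of $\A^1_\sY$ and the $\bG_m$-action (cf.\ \cite[3.2.2]{DegliseJinKhan}), all manifestly stable under base change, so $p^*$ commutes with $\gamma_t$. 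For $\partial$, the essential input is that the deformation space is stable under derived base change in $\sY$, i.e.\ $D_{\sX/\sY} \fibprodR_\sY \sY' \simeq D_{\sX'/\sY'}$ by \thmref{thm:deformation/main}\itemref{item:deformation/main/base change}, and that this isomorphism is compatible with the inclusions of the exceptional fibre (via \propref{prop:normal bundle}(i)) and of the open complement $\sY \times \bG_m$. Granting this, the localization triangle for $D_{\sX'/\sY'}$ is the pullback of the one for $D_{\sX/\sY}$, and naturality of the connecting morphism together with the base change formula (\thmref{thm:sixops/stacks/main}) gives $p^* \circ \partial = \partial \circ p^*$. Assembling the two halves proves (A), hence the theorem.

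The step I expect to be the main obstacle is the verification in (A) for the boundary map $\partial$: one must confirm that the change-of-base transformation of the six operations really does commute with the connecting morphisms of the localization triangles, and --- the more delicate point --- that the base change isomorphisms $D_{\sX/\sY} \fibprodR_\sY \sY' \simeq D_{\sX'/\sY'}$ and $N_{\sX/\sY} \fibprodR_\sX \sX' \simeq N_{\sX'/\sY'}$ are mutually compatible, i.e.\ identify the closed and open strata of the deformation space correctly. This is a diagram chase whose genuine geometric content is \thmref{thm:deformation/main}\itemref{item:deformation/main/base change} (the construction joint with Rydh). As an alternative route, after fixing smooth atlases one can reduce --- using the simplicial presentations of \propref{prop:normal bundle}\itemref{item:normal bundle/presentation} and of $D_{\sX/\sY}$ --- to the corresponding base-change statement for \da spaces, which is the analogue of the results of \cite[\S 3.2]{DegliseJinKhan}.
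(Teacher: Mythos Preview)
Your proposal is correct and takes essentially the same approach as the paper: the paper's own argument is the one-line observation that the claim ``follows easily from the stability of the deformation space $D_{\sX/\sY}$ under base change (\thmref{thm:deformation/main}\itemref{item:deformation/main/base change})'', and your (A)--(B) decomposition is exactly the unpacking of why that suffices. The key geometric input you isolate --- that $D_{\sX/\sY}\fibprodR_\sY\sY'\simeq D_{\sX'/\sY'}$ compatibly with the stratification --- is precisely what the paper invokes.
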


This follows easily from the stability of the deformation space $D_{\sX/\sY}$ under base change (\thmref{thm:deformation/main}\itemref{item:deformation/main/base change}).
More generally:

\begin{prop}[Excess intersection formula]
Suppose given a commutative square of \dA stacks
  \begin{equation}
    \begin{tikzcd}
      \sX' \ar{r}{g}\ar{d}{p}\ar[phantom]{rd}{\scriptstyle\Delta}
        & \sY' \ar{d}{q}
      \\
      \sX \ar[swap]{r}{f}
        & \sY
    \end{tikzcd}
  \end{equation}
over $\sS$, where $f$ and $g$ are quasi-smooth.
Assume that $\Delta$ is an \emph{excess intersection square}, i.e., that it is cartesian on underlying classical stacks and that the fibre $\sE$ of the canonical map
  \begin{equation*}
    p^*\sL_{\sX/\sY}[-1] \to \sL_{\sX'/\sY'}[-1]
  \end{equation*}
is a locally free $\sO_X$-module of finite rank.
Then there is an equality
  \begin{equation*}
    q^*_\Delta[\sX/\sY] = c_r(\sE) \cap [\sX'/\sY'] \in \H^\BM_{2d}(\sX'/\sY', \sF(d)),
  \end{equation*}
where $q^*_\Delta$ denotes the change of base homomorphism \sssecref{sssec:BM/base change}, $d = \dimvir(\sX/\sY)$, and $r = \rk(\sE)$.
\end{prop}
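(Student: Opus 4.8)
The strategy is to reduce the excess intersection formula to the ordinary base change statement (\thmref{thm:fund/base change}) by factoring the specialization maps through the normal bundle stacks and comparing them via the canonical map $p^*N_{\sX/\sY} \to N_{\sX'/\sY'}$. The hypothesis that $\Delta$ is cartesian on classical stacks guarantees that the change-of-base homomorphism $q^*_\Delta$ is defined; the hypothesis on $\sE$ says precisely that the induced morphism of vector bundle stacks over $\sX'$,
  \begin{equation*}
    \iota : p^*N_{\sX/\sY} \to N_{\sX'/\sY'},
  \end{equation*}
is a closed immersion with a vector bundle (not just vector bundle stack) quotient, namely $\bV_{\sX'}(\sE)$. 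Here I am using \propref{prop:normal bundle} to identify the base change of the normal bundle stack with $\bV_{\sX'}(p^*\sL_{\sX/\sY}[-1])$, and the short exact sequence $p^*\sL_{\sX/\sY}[-1] \to \sL_{\sX'/\sY'}[-1] \to \sE$ to produce $\iota$.

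\textbf{Step 1: Compatibility of specialization with base change.} First I would show that the two composites
  \begin{equation*}
    \H^\BM_s(\sY/\sS,\sF(r)) \xrightarrow{q^*_\Delta} \H^\BM_s(\sY'/\sS,\sF(r)) \xrightarrow{\sp_{\sX'/\sY'}} \H^\BM_s(N_{\sX'/\sY'}/\sS,\sF(r))
  \end{equation*}
and
  \begin{equation*}
    \H^\BM_s(\sY/\sS,\sF(r)) \xrightarrow{\sp_{\sX/\sY}} \H^\BM_s(N_{\sX/\sY}/\sS,\sF(r)) \xrightarrow{(\text{bc})^*} \H^\BM_s(p^*N_{\sX/\sY}/\sS,\sF(r)) \xrightarrow{\iota_*^{\,?}} \cdots
  \end{equation*}
agree after transporting along $\iota$. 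The cleanest way is via \thmref{thm:deformation/main}\itemref{item:deformation/main/base change}: the deformation space $D_{\sX'/\sY'}$ is the base change of $D_{\sX/\sY}$ along $\sY'\times\A^1 \to \sY\times\A^1$ (up to the subtlety that $\Delta$ is only classically cartesian, which is absorbed by \thmref{thm:BM/nil-invariance}), so the two localization long exact sequences — for $N_{\sX/\sY}\hookrightarrow D_{\sX/\sY}$ and $N_{\sX'/\sY'}\hookrightarrow D_{\sX'/\sY'}$ — are compatible under pullback, and the canonical section $\gamma_t$ is manifestly natural in the base. This gives $\iota^! \circ \sp_{\sX'/\sY'} \circ q^*_\Delta = \sp_{\sX/\sY}$ after identifying via $(\pi^!)^{-1}$; equivalently, chasing through \constrref{constr:fund/gys}, $g^!$ and $f^!$ become related by a Gysin map for $\iota$.

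\textbf{Step 2: Self-intersection for the vector bundle stack embedding.} The map $\iota : p^*N_{\sX/\sY} \to N_{\sX'/\sY'}$ is a quasi-smooth closed immersion whose normal bundle is $\pi'^*\sE$ (pulled back along the projection $\pi': N_{\sX'/\sY'}\to\sX'$), since the conormal sequence of $\iota$ is exactly the dualized short exact sequence defining $\sE$. Its own excess/self-intersection behaviour is governed by the \emph{self-intersection formula} alluded to in \ssecref{ssec:fund/properties}: for the zero section $0 : \sX' \to \bV_{\sX'}(\sE)$ one has $0^!\circ 0_* = c_r(\sE)\cap(-)$, and more generally the Gysin map $\iota^!$ composed with $\pi'_*$-type identifications yields the Euler class $c_r(\sE)$. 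Combining this with the homotopy invariance isomorphisms $\pi^!$ and $\pi'^!$ from \propref{prop:BM/homotopy} — which are intertwined by $\iota$ up to exactly the twist by $c_r(\sE)$ — converts the equality from Step 1 into
  \begin{equation*}
    q^*_\Delta \, f^! = c_r(\sE)\cap g^!,
  \end{equation*}
i.e. $q^*_\Delta[\sX/\sY] = c_r(\sE)\cap[\sX'/\sY']$ after plugging in the unit, using \rmkref{rmk:Gysin/fund} and the projection formula for the cap product against the composition product.

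\textbf{Main obstacle.} The delicate point is Step 2: carefully matching the homotopy-invariance isomorphism $\pi^!$ for the ``big'' vector bundle stack $N_{\sX'/\sY'}$ with the composite of $\pi^!$ for the ``small'' one $p^*N_{\sX/\sY}$ and the Gysin map $\iota^!$, and verifying that the discrepancy is precisely the Euler class $c_r(\sE)$ and not, say, its inverse or some total Chern class. This is the vector-bundle-stack analogue of the classical fact that for a sub-bundle $E'\subseteq E$ with quotient $E''$, the composite of Gysin maps for $X\to E'\to E$ equals the Gysin map for $X\to E$, and the excess term only appears when one instead pulls back along a non-transverse map. I would handle this by working at the level of the purity transformations $\pur$ (\rmkref{rmk:purity}): the relevant identity is the compatibility $\pur_{\pi'\circ\iota} = \pur_{\pi'}\circ \Sigma^{(\cdot)}\pur_\iota$ together with the expression of $\pur_\iota$ for a closed immersion of vector bundle stacks via its Euler/Thom class, which is exactly \cite[Subsect.~4.3]{DegliseJinKhan} transported along the six-operation formalism of Appendix~\ref{sec:sixops}. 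Everything else — the exactness of the localization sequences, naturality of $\gamma_t$, the reduction of the classically-cartesian hypothesis to honest base change via \thmref{thm:BM/nil-invariance} — is routine.
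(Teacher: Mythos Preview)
Your approach is exactly the one the paper defers to, namely the argument of \cite[Prop.~3.2.8]{DegliseJinKhan}: compatibility of the deformation spaces under (classically) cartesian base change reduces the comparison of specialization maps to a statement about the embedding of normal bundle stacks, and the excess Euler class then drops out of the Thom/self-intersection formalism for that embedding.

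One correction, however: you have the direction of $\iota$ reversed. The functor $\bV_{\sX'}(-)$ is contravariant, so the map $p^*\sL_{\sX/\sY}[-1] \to \sL_{\sX'/\sY'}[-1]$ induces a closed immersion
\[
  \iota : N_{\sX'/\sY'} \hookrightarrow p^*N_{\sX/\sY},
\]
not the other way; equivalently, $p^*N_{\sX/\sY}$ maps to $\bV_{\sX'}(\sE)$ with $N_{\sX'/\sY'}$ as the fibre over the zero section (consistent with the dimension count $\dimvir(\sX'/\sY') = \dimvir(\sX/\sY) + r$). With this fix your Steps~1--2 go through unchanged in spirit: naturality of the localization triangle for the map of deformation spaces gives $\iota_* \circ \sp_{\sX'/\sY'} \circ q^*_\Delta$ equal to the base-changed $\sp_{\sX/\sY}$ in $\H^\BM_*(p^*N_{\sX/\sY}/\sS,\sF)$, and then the identity $(\pi^!)^{-1} \circ \iota_* = c_r(\sE) \cap (\pi'^!)^{-1}$ is precisely the Thom-class computation you point to in \cite[Subsect.~4.3]{DegliseJinKhan}.
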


Same as the proof of \cite[Prop.~3.2.8]{DegliseJinKhan}.
We call $\sE$ the \emph{excess sheaf} associated to $\Delta$.
Note that its rank is $r = \dimvir(\sX'/\sY') - \dimvir(\sX/\sY)$.

\begin{cor}[Self-intersection formula]
Let $i : \sX \to \sY$ be a quasi-smooth closed immersion of relative virtual codimension $n$.
Consider the self-intersection square
  \begin{equation*}
    \begin{tikzcd}
      \sX \ar[equals]{r}\ar[equals]{d}\ar[phantom]{rd}{\scriptstyle\Delta}
        & \sX \ar{d}{i}
      \\
      \sX \ar[swap]{r}{i}
        & \sY.
    \end{tikzcd}
  \end{equation*}
We have
  \begin{equation*}
    i^*_\Delta[\sX/\sY] = c_n(\sN_{\sX/\sY}) \in \H^\BM_{-2n}(\sX/\sX, \sF(-n)) = \H^{2n}(\sX, \sF(n)),
  \end{equation*}
where $\sN_{\sX/\sY} = \sL_{\sX/\sY}[-1]$ is the conormal sheaf.
\end{cor}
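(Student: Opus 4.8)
The plan is to obtain the self-intersection formula as a direct special case of the preceding Excess intersection formula, so the only real work is to identify the excess sheaf in the self-intersection square. First I would observe that the square displayed in the statement is indeed an excess intersection square in the sense of the proposition: it is trivially cartesian on underlying classical stacks (all four corners are $\sX_\cl$, with identity maps on the two horizontal and two vertical sides that are identities), and both $i : \sX \to \sY$ and the identity $\id_\sX$ are quasi-smooth. It remains to compute the fibre $\sE$ of the canonical map $p^*\sL_{\sX/\sY}[-1] \to \sL_{\sX'/\sY'}[-1]$ with $p = \id_\sX$, $\sX' = \sX$, $\sY' = \sX$. Here $\sL_{\sX'/\sY'} = \sL_{\sX/\sX} \simeq 0$, so the map in question is $\sL_{\sX/\sY}[-1] \to 0$, whose fibre is $\sL_{\sX/\sY}[-1]$ itself. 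Since $i$ is a quasi-smooth closed immersion of relative virtual codimension $n$, the complex $\sL_{\sX/\sY}[-1]$ is a locally free $\sO_\sX$-module of rank $n$ — namely the conormal sheaf $\sN_{\sX/\sY}$ — as already noted in the text following \defnref{defn:normal bundle stack}. Thus the excess intersection square hypothesis is satisfied with excess sheaf $\sE = \sN_{\sX/\sY}$ of rank $r = n$.

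Next I would apply the Excess intersection formula verbatim. It yields
  \begin{equation*}
    i^*_\Delta[\sX/\sY] = c_n(\sN_{\sX/\sY}) \cap [\sX/\sX] \in \H^\BM_{-2n}(\sX/\sX, \sF(-n)).
  \end{equation*}
The remaining point is that $[\sX/\sX] = 1 \in \H^\BM_0(\sX/\sX, \sF) = \H^0(\sX, \sF)$, which is the fundamental class of the identity morphism (a smooth morphism of relative dimension $0$), as recorded in \constrref{constr:fund} and \defnref{defn:fund smooth}. Since cap product with the unit class $1$ is the identity on Borel--Moore homology (this is immediate from the multiplicativity of $\sF$, cf.\ the unit axiom in \ssecref{ssec:BM/operations}), we get $c_n(\sN_{\sX/\sY}) \cap 1 = c_n(\sN_{\sX/\sY})$, and the identification $\H^\BM_{-2n}(\sX/\sX, \sF(-n)) = \H^{2n}(\sX, \sF(n))$ is the general adjunction isomorphism from \ssecref{ssec:BM/definition}. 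This gives the stated formula.

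There is essentially no obstacle here beyond bookkeeping: the whole content is the computation $\sE = \sL_{\sX/\sY}[-1] = \sN_{\sX/\sY}$, which is formal once one uses $\sL_{\sX/\sX} \simeq 0$ together with the known structure of the cotangent complex of a quasi-smooth closed immersion. The only mildly delicate point worth spelling out is the compatibility of the normalization conventions — that $\dimvir(\sX/\sY) = -n$ for a codimension-$n$ immersion, so that the target bidegree works out to $\H^\BM_{2d}(\sX'/\sY', \sF(d))$ with $d = -n$, matching $\H^\BM_{-2n}(\sX/\sX, \sF(-n))$ — but this is consistent with the sign conventions fixed in the definition of relative virtual dimension and in \examref{exam:MGL_et}. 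Accordingly the proof is just: \emph{Apply the Excess intersection formula with $\sY' = \sX' = \sX$; the excess sheaf is $\sN_{\sX/\sY} = \sL_{\sX/\sY}[-1]$, and $[\sX/\sX] = 1$.}
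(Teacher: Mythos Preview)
Your proposal is correct and is exactly the argument the paper has in mind: the corollary is stated immediately after the Excess intersection formula with no separate proof, and your computation of the excess sheaf as $\sN_{\sX/\sY}=\sL_{\sX/\sY}[-1]$ together with $[\sX/\sX]=1$ is precisely the intended specialization. One cosmetic point: your parenthetical ``all four corners are $\sX_\cl$'' is slightly misleading (the bottom-right corner is $\sY$); what you need, and what holds, is that $(\sX\fibprodR_\sY\sX)_\cl \simeq \sX_\cl$ because a closed immersion is a monomorphism on classical truncations.
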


\begin{cor}[Key formula]
Let $i : \sX \to \sY$ be a quasi-smooth closed immersion of relative virtual codimension $n$.
Form the blow-up square \cite[Thm.~4.1.5]{KhanRydh}:
  \begin{equation*}
    \begin{tikzcd}
      \sD \ar{r}\ar{d}{p}\ar[phantom]{rd}{\scriptstyle\Delta}
        & \Bl_{\sX/\sY} \ar{d}{q}
      \\
      \sX \ar[swap]{r}{i}
        & \sY,
    \end{tikzcd}
  \end{equation*}
where $\sD = \P_\sX(\sN_{\sX/\sY})$ is the virtual exceptional divisor.
We have
  \begin{equation*}
    q^*_\Delta[\sX/\sY] = c_{n-1}(\sE) \cap [\sD/\Bl_{\sX/\sY}] \in \H^\BM_{-2n}(\sD/\Bl_{\sX/\sY}, \sF(-n)),
  \end{equation*}
where $\sE$ is the excess sheaf.
\end{cor}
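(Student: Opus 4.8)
The plan is to recognise the blow-up square $\Delta$ as an \emph{excess intersection square} in the sense of the excess intersection formula above, and then to apply that formula verbatim; this runs parallel to the treatment of the key formula in \cite[\S 3]{DegliseJinKhan}.

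Write $\sN := \sL_{\sX/\sY}[-1] = \sN_{\sX/\sY}$, a locally free sheaf of rank $n$ on $\sX$, so that $\sD = \P_\sX(\sN)$ and $p : \sD \to \sX$ is the associated $\P^{n-1}$-bundle. Two facts are needed. First, the square $\Delta$ is cartesian on underlying classical stacks; this is part of the package attached to the derived blow-up in \cite[Thm.~4.1.5]{KhanRydh} (the virtual exceptional divisor $\sD$ has the same classical truncation as the derived preimage $\sX \fibprodR_\sY \Bl_{\sX/\sY}$ of $\sX$). Second, the inclusion $g : \sD \to \Bl_{\sX/\sY}$ of the virtual Cartier divisor is a quasi-smooth closed immersion of relative virtual dimension $-1$ whose conormal sheaf is the tautological quotient line bundle,
\[
  \sN_{\sD/\Bl_{\sX/\sY}} = \sL_{\sD/\Bl_{\sX/\sY}}[-1] = \sO_\sD(1),
\]
and, crucially, under this identification the canonical comparison morphism $p^*\sL_{\sX/\sY}[-1] \to \sL_{\sD/\Bl_{\sX/\sY}}[-1]$ is the tautological surjection $p^*\sN \to \sO_\sD(1)$. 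Granting the second fact, the fibre of that morphism is its kernel --- a locally free sheaf $\sE$ of rank $n-1$, namely the tautological subbundle of $p^*\sN$ --- so $\Delta$ is an excess intersection square with excess sheaf $\sE$, and $\rk(\sE) = n-1 = \dimvir(\sD/\Bl_{\sX/\sY}) - \dimvir(\sX/\sY)$, as it must be.

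The excess intersection formula then gives immediately
\[
  q^*_\Delta[\sX/\sY] = c_{n-1}(\sE) \cap [\sD/\Bl_{\sX/\sY}]
\]
in $\H^\BM_{-2n}(\sD/\Bl_{\sX/\sY}, \sF(-n))$, which is the assertion; the bigrading is consistent since $c_{n-1}(\sE) \in \H^{2n-2}(\sD, \sF(n-1))$ and $[\sD/\Bl_{\sX/\sY}] \in \H^\BM_{-2}(\sD/\Bl_{\sX/\sY}, \sF(-1))$.

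The main obstacle is the second fact: identifying the canonical comparison map of conormal sheaves with the tautological quotient $p^*\sN \to \sO_\sD(1)$ (equivalently, computing the excess sheaf and checking it is locally free in degree $0$). I would establish it by reducing, via the base-change and smooth-descent properties of the normal bundle stack (\propref{prop:normal bundle}) together with the corresponding properties of the derived blow-up, to the case where $i$ is a regular closed immersion of affine schemes with an explicit factorisation; there the blow-up, its exceptional divisor, and the comparison map of conormal bundles are all read off from $\operatorname{Proj}$ of the Rees algebra, recovering the classical computation that underlies Fulton's key formula.
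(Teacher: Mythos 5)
Your proof is correct and is exactly the argument the paper has in mind: the Key formula is stated as a corollary immediately following the excess intersection formula with no explicit proof given, because it follows by applying that formula to the blow-up square $\Delta$ with excess sheaf the rank-$(n-1)$ tautological subbundle of $p^*\sN_{\sX/\sY}$ on $\sD = \P_\sX(\sN_{\sX/\sY})$. Your identification of the conormal sheaf $\sN_{\sD/\Bl_{\sX/\sY}} = \sO_\sD(1)$, the comparison map as the tautological surjection, and the bidegree check supply precisely the details the paper leaves implicit.
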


\subsection{Comparison with Behrend--Fantechi}
\label{ssec:fund/BF}

Let $f : \sX \to \sY$ be a quasi-smooth morphism of derived $1$-Artin stacks.
Assume that $f$ is representable by \dDM stacks and $\sY$ is classical.
In this case the virtual fundamental class
  \begin{equation*}
    [\sX/\sY]^\vir \in \H^\BM_{2d}(\sX_\cl/\sY, \sF(d))
  \end{equation*}
can also be defined using the approach of Behrend--Fantechi \cite{BehrendFantechi}.
Below, we give a variant of the construction of the Gysin map $f^!$ \eqref{eq:Gysin} which will visibly agree with the ``virtual pullback'' of Manolache \cite{Manolache}.
By Corollary~3.12 of \emph{op. cit.}, this will therefore identify our virtual fundamental class $[\sX/\sY]^\vir$ with the construction of Behrend--Fantechi.

Let $C_{\sX_\cl/\sY}$ denote the relative intrinsic normal cone \cite[Sect.~7]{BehrendFantechi} of the morphism $\sX_\cl \to \sX \to \sY$, and let $D_{\sX_\cl/\sY}$ denote Kresch's deformation to the intrinsic normal cone \cite[Thm.~2.31]{Manolache}.
There is a commutative diagram
  \begin{equation*}
    \begin{tikzcd}
      C_{\sX_\cl/\sY} \ar{r}\ar{d}{a}
        & D_{\sX_\cl/\sY} \ar{d}
        & \sY \times \bG_m \ar{l}\ar[equals]{d}
      \\
      N_{\sX/\sY} \ar{r}
        & D_{\sX/\sY}
        & \sY \times \bG_m \ar{l}
    \end{tikzcd}
  \end{equation*}
where the vertical arrows are closed immersions.
Using the upper row, one constructs just as in \eqref{eq:sp} a specialization map
  \begin{equation*}
    \sp_{\sX_\cl/\sY}
      : \H^\BM_s(\sY/\sS, \sF(r))
      \to \H^\BM_s(C_{\sX_\cl/\sY}/\sS, \sF(r)).
  \end{equation*}
By naturality of the localization triangle with respect to proper covariance (e.g. \cite[Prop.~2.2.10]{DegliseJinKhan}), we have an equality
  \begin{equation*}
    a_* \circ \sp_{\sX_\cl/\sY} = \sp_{\sX/\sY}
  \end{equation*}
of morphisms $\H^\BM_s(\sY/\sS, \sF(r)) \to \H^\BM_s(N_{\sX/\sY}/\sS, \sF(r))$.
In particular we get
  \begin{equation*}
    f^!
      = (\pi^!)^{-1} \circ \sp_{\sX/\sY}
      = (\pi^!)^{-1} \circ a_* \circ \sp_{\sX_\cl/\sY},
  \end{equation*}
where $\pi : N_{\sX/\sY} \to \sX$ is the projection.
Now the right-hand side is precisely the virtual pullback $f^!_{N_{\sX/\sY}}$ \cite[Constr.~3.6]{Manolache} constructed with respect to the vector bundle stack $N_{\sX/\sY}$.

\subsection{Non-transverse Bézout theorem}
\label{ssec:fund/Bezout}

Let $f : \sZ \to \sX$ be a morphism of \dA stacks over $\sS$.
Suppose that $f$ is quasi-smooth of relative virtual dimension $d$.
The fundamental class $[\sZ/\sX]$ induces a cohomological Gysin map
  \begin{equation}\label{eq:cohom Gysin}
    f_! : \H^r(\sZ, \sF(s)) \to \H^{r-2d}_\sZ(\sX, \sF(s-d))
  \end{equation}
where the target is the cohomology of $\sX$ with support in $\sZ$ \eqref{eq:cohomology with support}.
This map is the composite
  \begin{equation*}
    \H^\BM_{-r}(\sZ/\sZ, \sF(-s))
      \xrightarrow{\circ[\sZ/\sX]} \H^\BM_{-r+2d}(\sZ/\sX, \sF(-s+d)).
  \end{equation*}
Composing further with the Borel--Moore direct image \sssecref{sssec:BM/proper}
  \begin{equation*}
    f_* : \H^\BM_{-r+2d}(\sZ/\sX, \sF(-s+d)) \to \H^\BM_{-r+2d}(\sX/\sX, \sF(-s+d)),
  \end{equation*}
when it exists, gives rise to the Gysin map
  \begin{equation}
    f_! : \H^r(\sZ, \sF(s)) \to \H^{r-2d}(\sX, \sF(s-d))
  \end{equation}
valued in the cohomology of $\sX$.
For example, this exists when $f$ is proper and representable, or just proper if $\sX$ is Deligne--Mumford and $\sF=\Q$ or $\MGL_\Q$.

In particular we have a cohomological fundamental class
  \begin{equation}
    [\sZ] = f_!(1) \in \H^{-2d}(\sX, \sF(-d))
  \end{equation}
under these assumptions.
For simplicity we'll state \thmref{thm:Bezout} below only for the representable case, but the proof only requires the existence of proper direct images.

The following is a generalized cohomological Bézout theorem, where no transversity assumptions are imposed.

\begin{thm}\label{thm:Bezout}
Let $\sX$ be \dA stack over $\sS$, and let $f : \sY \to \sX$ and $g : \sZ \to \sX$ be representable proper quasi-smooth morphisms of relative virtual dimension $-d$ and $-e$, respectively.
Then we have
  \begin{equation*}
    [\sY] \cdot [\sZ] = [\sY \fibprodR_\sX \sZ] \in \H^{2d+2e}(\sX, \sF(d+e)).
  \end{equation*}
\end{thm}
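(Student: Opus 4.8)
The plan is to deduce the formula from the base change and functoriality properties of fundamental classes (\thmref{thm:fund/base change} and \thmref{thm:fund/functoriality}) together with the bivariant compatibilities of \ssecref{ssec:BM/compatibilities}, exactly as one proves the analogous statement in an arbitrary bivariant theory. First I would form the homotopy cartesian square
  \begin{equation*}
    \begin{tikzcd}
      \sW \ar{r}{g'}\ar{d}{f'} & \sZ \ar{d}{g} \\
      \sY \ar{r}{f} & \sX,
    \end{tikzcd}
  \end{equation*}
with $\sW = \sY\fibprodR_\sX\sZ$. Since representability, properness, quasi-smoothness and the relative virtual dimension are all stable under derived base change, $f'$ is representable, proper and quasi-smooth of relative virtual dimension $-e$, $g'$ is representable, proper and quasi-smooth of relative virtual dimension $-d$, and the structural morphism $h = f\circ f' = g\circ g' : \sW \to \sX$ is representable, proper and quasi-smooth of relative virtual dimension $-(d+e)$. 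I also recall that, under the identification $\H^*(\sX,\sF(*)) = \H^\BM_{-*}(\sX/\sX,\sF(-*))$, the cup product is a special case of the composition product \sssecref{sssec:BM/composition product}, and that, by definition of the cohomological fundamental classes, $[\sY] = f_*[\sY/\sX]$, $[\sZ] = g_*[\sZ/\sX]$, $[\sW] = h_*[\sW/\sX]$, where $[\sW] = [\sY\fibprodR_\sX\sZ]$.

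Next I would ``move the two proper pushforwards outside''. The compatibility of proper direct image with the composition product on the right (\sssecref{sssec:BM/compatibilities/direct image + composition (right)}), applied with $\alpha = [\sY/\sX]$, $\beta = [\sZ]$ and the morphism $f$, gives
  \begin{equation*}
    [\sY]\cdot[\sZ] = f_*[\sY/\sX]\circ[\sZ] = f_*\bigl([\sY/\sX]\circ[\sZ]\bigr).
  \end{equation*}
The compatibility of proper direct image with the composition product on the left (\sssecref{sssec:BM/compatibilities/direct image + composition (left)}), applied to the cartesian square above with $[\sZ] = g_*[\sZ/\sX]$ regarded as the pushforward along $g$ and $[\sY/\sX]$ as the ``outside'' class, gives
  \begin{equation*}
    [\sY/\sX]\circ[\sZ] = [\sY/\sX]\circ g_*[\sZ/\sX] = f'_*\bigl(g^*[\sY/\sX]\circ[\sZ/\sX]\bigr),
  \end{equation*}
where $g^*$ denotes the change-of-base homomorphism \sssecref{sssec:BM/base change} attached to the square. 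By \thmref{thm:fund/base change} this change-of-base homomorphism carries $[\sY/\sX]$ to $[\sW/\sZ]$, and by \thmref{thm:fund/functoriality} we have $[\sW/\sZ]\circ[\sZ/\sX] = [\sW/\sX]$. Combining these identities and using functoriality of proper direct image, $f_*f'_* = h_*$, yields
  \begin{equation*}
    [\sY]\cdot[\sZ] = f_*f'_*[\sW/\sX] = h_*[\sW/\sX] = [\sY\fibprodR_\sX\sZ],
  \end{equation*}
as claimed; a bookkeeping check of homological degrees and Tate twists confirms that everything lands in $\H^{2d+2e}(\sX,\sF(d+e))$.

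I do not expect any single step to present a genuine obstacle: the real content — stability of the deformation space under base change (behind \thmref{thm:fund/base change}) and the double deformation space argument (behind \thmref{thm:fund/functoriality}) — is already available. The only care required is combinatorial: in each application of a bivariant compatibility from \ssecref{ssec:BM/compatibilities} one must match the cartesian square correctly and keep track of which class plays the role of the ``outside'' factor, of the pushed-forward factor, and of the change-of-base morphism, after which the degree and twist shifts are automatic. For the non-representable version alluded to before the statement one replaces the proper direct image of \sssecref{sssec:BM/proper} by its extension to proper morphisms of Deligne--Mumford stacks (valid for $\sF = \Q$ or $\MGL_\Q$, or under the milder hypotheses indicated there) and runs the same argument verbatim.
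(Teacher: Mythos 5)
Your proposal is correct and is essentially the same proof as the paper's, just read in the opposite direction. The paper starts from $h_*[\sW/\sX]$ and expands via functoriality, base change, and the three bivariant compatibilities (direct image + composition on the right, change of base + direct image, direct image + composition on the left, in that order) to reach $f_*[\sY/\sX]\circ g_*[\sZ/\sX]$. You start from $f_*[\sY/\sX]\circ g_*[\sZ/\sX]$, apply direct image + composition on the right (to $f$) and then direct image + composition on the left (to the genuine cartesian square, rather than a degenerate one), and land immediately on $f_*f'_*(g^*[\sY/\sX]\circ[\sZ/\sX])$, which base change and functoriality collapse to $h_*[\sW/\sX]$. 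The small gain in your route is that the change-of-base-plus-direct-image compatibility \sssecref{sssec:BM/compatibilities/base change + direct image} is never needed: applying \sssecref{sssec:BM/compatibilities/direct image + composition (left)} directly to the nontrivial square $\sW,\sZ,\sY,\sX$ absorbs that step. This is a minor reorganization rather than a different argument; the essential inputs (functoriality, stability under base change, and the bivariant axioms) are identical.
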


\begin{proof}
Consider the homotopy cartesian square
  \begin{equation*}
    \begin{tikzcd}
      \sW \ar{r}{p}\ar{d}{q}\ar{rd}{h}
        & \sZ \ar{d}{g}
      \\
      \sY \ar{r}{f}
        & \sX
    \end{tikzcd}
  \end{equation*}
where $\sW = \sY \fibprodR_\sX \sZ$.
Under the identification
  \begin{equation*}
    \H^{2d+2e}(\sX, \sF(d+e)) = \H^\BM_{-2d-2e}(\sX/\sX, \sF(-d-e)),
  \end{equation*}
the desired equality is
  \begin{align*}
    h_*[\sW/\sX]
      &= g_*p_*([\sW/\sZ] \circ [\sZ/\sX]) \\
      &= g_*p_*(g^*[\sY/\sX] \circ [\sZ/\sX]) \\
      &= g_*(p_*g^*[\sY/\sX] \circ [\sZ/\sX]) \\
      &= g_*(g^*f_*[\sY/\sX] \circ [\sZ/\sX]) \\
      &= f_*[\sY/\sX] \circ g_*[\sZ/\sX].
  \end{align*}
The first and second equalities follow from the functoriality and base change properties of the fundamental class (Theorems~\ref{thm:fund/functoriality} and \ref{thm:fund/base change}).
For the rest we use the formulas \sssecref{sssec:BM/compatibilities/direct image + composition (right)}, \sssecref{sssec:BM/compatibilities/base change + direct image}, and \sssecref{sssec:BM/compatibilities/direct image + composition (left)}, in that order.
\end{proof}

The variant for schemes stated in the introduction \eqref{eq:Bezout} is obtained by applying this to the integral motivic cohomology spectrum $\sF = \Z$ and using the fundamental classes of \varntref{varnt:fund for asps}.

\subsection{Grothendieck--Riemann--Roch}
\label{ssec:fund/GRR}

Let $\sF$ and $\sG$ be two (multiplicative) coefficients over $\sS$ and $\phi : \sF \to \sG$ a ring morphism.
The morphism $\phi$ induces a homomorphism
  \begin{equation*}
    \phi_* : \H^s(\sX, \sF(r)) \to \H^s(\sX, \sG(r))
  \end{equation*}
for every $\sX$ over $\sS$.
Given a quasi-smooth morphism of \dA stacks $f : \sX \to \sY$, let $[\sX/\sY]^\sF$ and $[\sX/\sY]^\sG$ denote the fundamental classes formed with respect to $\sF$ and $\sG$, respectively.
The following Grothendieck--Riemann--Roch formula compares these two classes in terms of a certain class $\Td^\phi_{\sX/\sY} \in \H^0(\sX, \sG)$.

\begin{thm}\label{thm:GRR}
Let $f : \sX \to \sY$ be a quasi-smooth morphism of \dA stacks over $\sS$.
Then we have
  \begin{equation}\label{eq:GRR}
    \phi_* ([\sX/\sY]^\sF) = \Td^\phi_{\sX/\sY} \cap [\sX/\sY]^\sG
  \end{equation}
in $\H^\BM_{2d}(\sX/\sY, \sG(d))$, where $d = \dimvir(\sX/\sY)$ and where $\cap$ denotes the cap product \sssecref{sssec:BM/cap product}.
\end{thm}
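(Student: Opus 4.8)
The plan is to adapt the deformation-to-the-normal-bundle proof of Grothendieck--Riemann--Roch from \cite[Subsect.~4.3]{DegliseJinKhan}, the new ingredients being the normal bundle \emph{stack} (which lets one treat smooth morphisms and quasi-smooth closed immersions on the same footing) and a smooth-descent argument to pass from schemes to \dA stacks.

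First I would record the formal behaviour of the correction class. The class $\Td^\phi$ is defined on perfect complexes of Tor-amplitude $[-\infty,1]$ by the splitting principle, hence is multiplicative on distinguished triangles and factors through $\K_0$; applied to the triangle $f^*\sL_{\sY/\sZ} \to \sL_{\sX/\sZ} \to \sL_{\sX/\sY}$ this gives
  \begin{equation*}
    \Td^\phi_{\sX/\sZ} = \Td^\phi_{\sX/\sY} \cdot f^* \Td^\phi_{\sY/\sZ}.
  \end{equation*}
Combined with functoriality of fundamental classes (\thmref{thm:fund/functoriality}), compatibility of $\phi_*$ and of $\cap$ with the composition product, and the projection formula for proper direct image, this shows that \eqref{eq:GRR} is stable under composition of quasi-smooth morphisms. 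Next, choosing a presentation of $f$ as in \propref{prop:normal bundle}\itemref{item:normal bundle/presentation} and invoking the limit formula of \rmkref{rmk:Gamma^BM as limit} --- with respect to which $\phi_*$, the cap product, the Todd class and the fundamental class are all compatible, since each commutes with smooth Gysin maps and with base change --- I would reduce \eqref{eq:GRR} to the case where $\sX$ and $\sY$ are schemes. Over schemes, $f$ factors Zariski-locally as a quasi-smooth closed immersion followed by a smooth morphism \cite[2.3.14]{KhanRydh}, and every class in sight is Zariski-local, so by the composition step it remains to treat the two cases: $f$ smooth, and $f$ a quasi-smooth closed immersion.

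For $f$ smooth the fundamental class is the image of the purity isomorphism $f^! = \Sigma^{\sL_{\sX/\sY}} f^*$ (\defnref{defn:fund smooth}), and \eqref{eq:GRR} becomes the statement that $\phi$ intertwines the purity isomorphisms of $\sF$ and $\sG$ up to the class $\Td^\phi_{\sX/\sY}$ of the relative tangent bundle; by the projective bundle formula and the splitting principle this reduces to the case of a line bundle, i.e.\ to the power series relating the two orientations on $\bP^\infty$, which is how $\Td^\phi$ is defined. For $f = i$ a quasi-smooth closed immersion, the specialization map of \constrref{constr:fund/sp} is a composite of maps each induced by a natural transformation of six-operations functors (the boundary maps of the two localization triangles and the section $\gamma_t$), all natural in the coefficient; hence $\phi_* \circ \sp^\sF_{\sX/\sY} = \sp^\sG_{\sX/\sY} \circ \phi_*$. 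Feeding this through \constrref{constr:fund/gys}, the entire discrepancy between $\phi_*[\sX/\sY]^\sF$ and $[\sX/\sY]^\sG$ is concentrated in the homotopy-invariance isomorphism $\pi^!$ of \propref{prop:BM/homotopy} for the normal bundle $N_{\sX/\sY}$, which for a closed immersion is a genuine vector bundle; comparing $(\pi^!_\sF)^{-1}$ with $(\pi^!_\sG)^{-1}$ is once more a Thom-isomorphism comparison, and it produces precisely the factor $\Td^\phi_{\sX/\sY}$ by the normalization of the Todd class of $\sL_{\sX/\sY} \simeq \sN_{\sX/\sY}[1]$.

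The main obstacle is this Thom-isomorphism comparison that underlies both cases: that $\phi_*$ carries the $\sF$-theoretic Thom class of a finite locally free sheaf $\sE$ to its $\sG$-theoretic Thom class up to multiplication by $\Td^\phi(\sE)^{\pm 1}$. By the splitting principle this reduces to line bundles and thus to the comparison of the two chosen orientations, but invoking the splitting principle uses the projective bundle formula and homotopy invariance --- which in the reduction above I have arranged to apply only over schemes, where the argument is exactly that of \cite[Subsect.~4.3]{DegliseJinKhan} (see also \cite[Sect.~2.1]{DegliseOrientation}). What then remains is the routine but lengthy bookkeeping needed to propagate the scheme-level identity back through the simplicial presentations to arbitrary \dA stacks, and to check that $\Td^\phi$ of the (possibly three-term) cotangent complex of a quasi-smooth morphism of stacks is computed in a manner compatible with those presentations.
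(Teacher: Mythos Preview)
Your approach is correct in outline, but it takes a considerably longer route than the paper's proof and is built around a different definition of the Todd class. The paper does not define $\Td^\phi(\sE)$ via the splitting principle; it \emph{defines} $\td_\phi(\sE)$ as the unique class in $\H^0(\sX,\sG)^\times$ satisfying
\[
  \phi_*(\th_\sX^\sF(-\sE)) = \td_\phi(\sE) \cap \th_\sX^\sG(-\sE),
\]
directly for perfect complexes of Tor-amplitude $[-k,1]$ on an arbitrary \dA stack (this makes sense because capping with the Thom class is an isomorphism, by homotopy invariance for vector bundle stacks). With this definition in hand, the paper applies deformation to the normal bundle stack (\thmref{thm:deformation/main}) at full generality --- not only for closed immersions of schemes --- and observes, just as you do, that the specialization map $\sp_{\sX/\sY}$ is natural in the coefficient. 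Thus \eqref{eq:GRR} reduces immediately to the case where $f$ is the zero section of the normal bundle stack $N_{\sX/\sY}$; there the fundamental class is the Thom class, and the formula is the \emph{definition} of $\Td^\phi_{\sX/\sY}$.

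Compared to this, your reductions (descent to schemes, Zariski-local factorization, composition stability, splitting principle) are all unnecessary: the normal bundle stack construction of \ssecref{ssec:deformation} already handles an arbitrary quasi-smooth morphism of \dA stacks in one step, and the ``main obstacle'' you identify --- the Thom-isomorphism comparison --- is absorbed into the definition rather than proved. Your argument would work, modulo some care in the Zariski-local step (checking that the identity in $\H^\BM_{2d}(\sX/\sY,\sG(d))$ can be verified after restriction to a Zariski cover of $\sX$), but it buys nothing extra here; the paper's route is both shorter and avoids the bookkeeping you flag at the end.
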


These immediately gives the usual formulas in Borel--Moore homology and cohomology:

\begin{cor}\label{cor:GRR BM}
Let $f : \sX \to \sY$ be a quasi-smooth morphism of \dA stacks over $\sS$.
Then the square
  \begin{equation*}
    \begin{tikzcd}
      \H^\BM_s(\sY/\sS, \sF(r)) \ar{rrr}{f^!}\ar{d}{\phi_*}
        &&& \H^\BM_{s+2d}(\sX/\sS, \sF(r+d)) \ar{d}{\phi_*}
      \\
      \H^\BM_s(\sY/\sS, \sG(r)) \ar{rrr}{\Td^\phi_{\sX/\sY} \cap f^!}
        &&& \H^\BM_{s+2d}(\sX/\sS, \sG(r+d))
    \end{tikzcd}
  \end{equation*}
commutes.
If $f$ is moreover proper, then the square
  \begin{equation*}
    \begin{tikzcd}
      \H^s(\sX, \sF(r)) \ar{rrr}{f_!}\ar{d}{\phi_*}
        &&& \H^{s-2d}(\sY, \sF(r-d)) \ar{d}{\phi_*}
      \\
      \H^s(\sX, \sG(r)) \ar{rrr}{f_!(\Td^\phi_{\sX/\sY} \cup -)}
        &&& \H^{s-2d}(\sY, \sG(r-d))
    \end{tikzcd}
  \end{equation*}
also commutes.
\end{cor}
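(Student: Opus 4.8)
The plan is to deduce both commutative squares formally from \thmref{thm:GRR}, using only the bivariant bookkeeping recalled in \secref{sec:BM} and developed in detail in \cite[Sect.~2.2]{DegliseJinKhan}; no new geometric input is needed.

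\textbf{The Borel--Moore square.} First I would invoke \rmkref{rmk:Gysin/fund}, which expresses the Gysin map as a composition product: $f^!(x) = [\sX/\sY]^\sF \circ x$ for $x \in \H^\BM_s(\sY/\sS, \sF(r))$. Since $\phi : \sF \to \sG$ is a morphism of ring spectra, the induced map $\phi_*$ is multiplicative for the composition product \sssecref{sssec:BM/composition product} (which is built from the multiplication on the coefficient), whence
\[
  \phi_*\bigl(f^!(x)\bigr) = \phi_*\bigl([\sX/\sY]^\sF\bigr) \circ \phi_*(x).
\]
Applying \thmref{thm:GRR} to rewrite $\phi_*([\sX/\sY]^\sF) = \Td^\phi_{\sX/\sY} \cap [\sX/\sY]^\sG$, and then using that cap product with $\Td^\phi_{\sX/\sY} \in \H^0(\sX,\sG)$ is itself an instance of the composition product (with the cohomology factor on the left) together with associativity of the composition product, one moves the Todd class to the outside:
\[
  \bigl(\Td^\phi_{\sX/\sY} \cap [\sX/\sY]^\sG\bigr) \circ \phi_*(x) = \Td^\phi_{\sX/\sY} \cap \bigl([\sX/\sY]^\sG \circ \phi_*(x)\bigr) = \Td^\phi_{\sX/\sY} \cap f^!\bigl(\phi_*(x)\bigr),
\]
which is exactly the commutativity of the first square.

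\textbf{The cohomological square.} Next assume $f$ proper, so that (as in \ssecref{ssec:fund/Bezout}) the cohomological Gysin map is $f_!(\alpha) = f_*\bigl(\alpha \circ [\sX/\sY]^\sF\bigr)$, where $\alpha \in \H^s(\sX,\sF(r))$ is read off in $\H^\BM_{-s}(\sX/\sX, \sF(-r))$ and $f_*$ is the proper direct image \sssecref{sssec:BM/proper}. I would then commute $\phi_*$ past $f_*$ (these are induced by commuting natural transformations of the six operations --- the morphism $\sF \to \sG$ on one side, the counit $f_!f^! \to \id$ on the other) and past the composition product, and invoke \thmref{thm:GRR}, obtaining
\[
  \phi_*\bigl(f_!(\alpha)\bigr) = f_*\Bigl(\phi_*(\alpha) \circ \bigl(\Td^\phi_{\sX/\sY} \cap [\sX/\sY]^\sG\bigr)\Bigr).
\]
Rewriting the cap and cup products as composition products, using associativity, and moving $\Td^\phi_{\sX/\sY}$ --- which sits in even degree --- past $\phi_*(\alpha)$ by graded-commutativity of the cup product on $\H^\ast(\sX,\sG)$, the right-hand side becomes $f_*\bigl((\Td^\phi_{\sX/\sY} \cup \phi_*(\alpha)) \circ [\sX/\sY]^\sG\bigr) = f_!\bigl(\Td^\phi_{\sX/\sY} \cup \phi_*(\alpha)\bigr)$, which is the commutativity of the second square.

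\textbf{Main obstacle.} All the content is in \thmref{thm:GRR}; the remaining work is the package of compatibilities asserting that $\phi_*$ is a morphism of bivariant theories (compatible with composition product, cap and cup products, and proper direct image) and that cap/cup products are special cases of an associative composition product. None of this is difficult, but it must be carefully extracted from \cite[Sect.~2.2]{DegliseJinKhan} together with Appendix~\ref{sec:sixops}; the one point to watch is the commutation $\phi_* \circ f_* = f_* \circ \phi_*$ for proper $f$, which one verifies at the level of the six operations.
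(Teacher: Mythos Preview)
Your proposal is correct and is precisely the unpacking that the paper leaves implicit: the paper states the corollary as an immediate consequence of \thmref{thm:GRR} without further argument, and your derivation via \rmkref{rmk:Gysin/fund}, multiplicativity of $\phi_*$ for the composition product, and the associativity/commutativity bookkeeping of \ssecref{ssec:BM/compatibilities} is exactly how one makes ``immediately'' precise. Nothing is missing.
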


Let $\sE$ be a perfect complex of Tor-amplitude $[-k,1]$, $k\ge -1$, on $\sX$ of virtual rank $d$.
We define the Todd class $\td_\phi(\sE)$.
Since capping with the Thom class $\th_\sX^\sG(-\sE)$ defines an isomorphism $\H^0(\sX, \sG) \to \H^{2d}_\sX(\bV_\sX(\sE[-1]), \sG(d))$, there exists a unique class
  \begin{equation*}
    \td_\phi(\sE) \in \H^0(\sX, \sG)^\times
  \end{equation*}
such that the relation
  \begin{equation}\label{eq:Td(E)}
    \phi_*(\th_\sX^\sF(-\sE)) = \td_\phi(\sE) \cap \th_\sX^\sG(-\sE)
  \end{equation}
holds in $\H^{2d}_\sX(\bV_\sX(\sE[-1]), \sG(d))$.
Exactly as in \cite[Subsect.~5.2]{DegliseOrientation}, this Todd class can be described explicitly using the formalism of formal group laws.
We set $\Td_{\sX/\sY}^\phi = \td_\phi(\sL_{\sX/\sY})$ for $f : \sX \to \sY$ quasi-smooth.

By deformation to the normal bundle stack (\ssecref{ssec:deformation}), the formula \eqref{eq:GRR} reduces to the case where $f$ is the zero section of a vector bundle stack $\sY = \bV_\sX(\sE[-1])$, where $\sE$ is a perfect complex of Tor-amplitude $[-k,1]$, $k\ge -1$.
In this case the fundamental class $[\sX/\sY]^\sF$ is nothing else than the Thom class $\th_\sX^\sF(\sE)$, and similarly for $\sG$, so the formula reduces to \eqref{eq:Td(E)}.
\thmref{thm:GRR} is proven.

Let's make this formula slightly more explicit when $\phi$ is the total Chern character.
This is a morphism of motivic ring spectra
  \begin{equation*}
    \ch : \KGL \to \bigoplus_{i\in\Z} \Q(i)[2i]
  \end{equation*}
in $\SH(\Spec(\Z))$, which induces an isomorphism $\KGL_\Q \simeq \bigoplus_{i\in\Z} \Q(i)[2i]$ upon rationalization \cite{Riou}, \cite[5.3.3]{DegliseOrientation}.
Since $\Q$ satisfies étale descent, $\ch$ factors through the étale localization $\KGL^{\et}$.
For any \dA stack $\sS$, we obtain by inverse image along the structural morphism a canonical Chern character
  \begin{equation*}
    \ch : \KGL^{\et}_{\sS} \to \bigoplus_{i\in\Z} \Q_\sS(i)[2i]
  \end{equation*}
in $\SHet(\sS)$, which induces an isomorphism $\KGL^{\et}_{\Q,\sS} \simeq \bigoplus_{i\in\Z} \Q_\sS(i)[2i]$.
The source and target admit canonical orientations such that the Todd class $\Td_{\sX/\sY}$ is the classical Todd class \cite[5.3.3]{DegliseOrientation}.
Suppose that $\sS$ is the spectrum of a field $k$, so that the Borel--Moore homology represented by $\KGL^\et_{\sS}$ coincides with étale hypercohomology with coefficients in $G$-theory, and the proper covariance and Gysin maps are compatible with the respective intrinsic operations in $G$-theory (\examref{exam:KGL_et}).
The Borel--Moore homology represented by $\Q_\sS$ coincides with the rational (higher) Chow groups.
Under these identifications the Chern character $\ch$ induces canonical homomorphisms which we denote
  \begin{equation*}
    \tau_\sX : \H^0_\et(\sX, \G) \to \CH_*(\sX)_\Q.
  \end{equation*}
We also write $\tau_\sX$ for the composite with the canonical morphism $G(\sX) \to \H^0_\et(\sX, \G)$.
\corref{cor:GRR BM} now yields the formula
  \begin{equation*}
    \tau_\sX(\sO_X) = \Td_{\sX} \cap [\sX]
  \end{equation*}
in $\CH_{d}(\sX)_\Q$, or equivalently
  \begin{equation}
    [\sX] = \Td_{\sX}^{-1} \cap \tau_{\sX}(\sO_\sX),
  \end{equation}
where we write simply $[\sX]$ for $[\sX/\Spec(k)]$ and similarly for the Todd class.
This is an extension of Kontsevich's original conjectural formula for the virtual fundamental class $[\sX]^\vir$ \cite[1.4.2]{Kontsevich} to Artin stacks.

\subsection{Absolute purity}
\label{ssec:fund/absolute purity}

In this subsection we extend Gabber's proof of the absolute cohomological purity conjecture \cite[Exp.~I, 3.1.4]{SGA5} to Artin stacks.

\begin{thm}[Absolute purity]\label{thm:absolute purity}
Let $f : \sX \to \sY$ be a locally of finite type representable morphism between regular\footnote{Recall that an Artin stack $\sS$ is regular if and only if, for every smooth morphism $S \to \sS$ with $S$ a scheme, $S$ is regular.} Artin stacks over $\Z[\tfrac{1}{n}]$, for some integer $n\in\Z$.
Let $\Lambda=\Z/n\Z$ and denote by $\Lambda^\et$ the $\Lambda$-linear étale motivic cohomology spectrum (\examref{exam:Z_et}).
Then the purity transformation $\pur_f$ \eqref{eq:pur_f(F)} induces a canonical isomorphism
  \begin{equation}\label{eq:absolute purity}
    \Lambda^\et_\sX(d)[2d] \to f^!(\Lambda^\et_\sY)
  \end{equation}
of étale motivic spectra over $\sX$.
\end{thm}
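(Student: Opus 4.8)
The plan is to reduce the statement, by descent and a local factorization, to Gabber's absolute purity theorem for a regular closed immersion of regular schemes. First, the morphism \eqref{eq:absolute purity} is defined: a locally of finite type morphism between regular Artin stacks is automatically quasi-smooth---indeed a local complete intersection---since this can be checked on smooth atlases and then reduces to the classical fact that a finite type morphism of regular (locally noetherian) schemes is lci. Hence the purity transformation $\pur_f$ of \rmkref{rmk:purity}, and with it the canonical map \eqref{eq:absolute purity}, is well defined; the content of the theorem is its invertibility.

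The first step is to reduce to the case of schemes. Since $\SHet$ satisfies étale hyperdescent (Appendix~\ref{sec:sixops}) and the purity transformation is stable under arbitrary derived base change---this follows from the base change property of fundamental classes (\thmref{thm:fund/base change}) together with the naturality of $\pur_f$, or from smooth base change in the six operations (\thmref{thm:sixops/stacks/main})---it suffices to prove the claim after pulling back along a smooth atlas $Y \to \sY$ with $Y$ a scheme. Since $\sY$ is regular, so is $Y$; since $f$ is representable, $\sX\fibprodR_\sY Y$ is an algebraic space, regular because it is smooth over $\sX$, and it admits an étale atlas by a regular scheme. This reduces us to the case where $\sX = X$ and $\sY = Y$ are regular schemes over $\Z[\tfrac{1}{n}]$ and $f : X \to Y$ is locally of finite type.

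The second step is to reduce to a regular closed immersion. The assertion is Zariski-local on $X$, so we may assume $f$ factors as a closed immersion $i : X \hookrightarrow \A^N_Y$ followed by the projection $p : \A^N_Y \to Y$. As $X$ and $\A^N_Y$ are regular, $i$ is a regular closed immersion, of codimension $c$ say, and $p$ is smooth. By the functoriality of fundamental classes (\thmref{thm:fund/functoriality})---equivalently, of purity transformations (\rmkref{rmk:purity})---$\pur_f$ factors through $\pur_i$ and the transformation $\pur_p$, which is invertible by purity for smooth morphisms (\thmref{thm:sixops/stacks/purity}, valid for any coefficient). Thus it remains to show that for a regular closed immersion $i : Z \hookrightarrow W$ of codimension $c$ between regular schemes over $\Z[\tfrac{1}{n}]$, the purity transformation $\pur_i : \Lambda^\et_Z(-c)[-2c] \to i^!(\Lambda^\et_W)$ is an isomorphism in $\SHet(Z)$.

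The last step is to invoke Gabber's theorem. Over a base on which $n$ is invertible the $\Lambda$-linear étale motivic stable homotopy category is equivalent, compatibly with the six operations, to a derived category of étale sheaves of $\Lambda$-modules (rigidity, \cite{CisinskiDegliseEtale}; on Borel--Moore homology groups this identification is recorded in \examref{exam:Z_et}). Under it, $i^!(\Lambda^\et_W)$ becomes $Ri^!\Lambda_W$, the twist $\Lambda^\et_Z(-c)[-2c]$ becomes $\Lambda_Z(-c)[-2c]$, and $\pur_i$ is carried to the classical absolute purity morphism, which is an isomorphism by Gabber \cite[Exp.~XVI]{ILO}, \cite{Fujiwara}. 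I expect the main obstacle to be exactly this last identification: one must check that the purity transformation produced here by deformation to the normal bundle stack coincides with the map of \cite{ILO,Fujiwara}---for which the comparison of the motivic purity transformation with the classical Gysin morphism in \cite{DegliseJinKhan} should suffice---while keeping careful track of the compatibility of the rigidity equivalence with the functor $i^!$ and with Tate twists.
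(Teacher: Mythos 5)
Your proposal is correct and follows the paper's own strategy almost verbatim: reduce by smooth base change and étale descent from Artin stacks to schemes, reduce by functoriality of the purity transformation and smooth purity to a regular closed immersion of regular schemes, and then invoke Gabber's theorem through the rigidity equivalence of Cisinski--Déglise. The only cosmetic differences are that you compress the paper's two-stage reduction (first to the pair algebraic space over scheme, then via an étale cover to schemes) into one passage, and you fix a concrete Zariski-local factorization through $\A^N_Y$ where the paper uses an abstract smoothable factorization; you also correctly flag the real subtlety, namely identifying $\pur_i$ with Gabber's Gysin map, which the paper resolves by citing \cite[4.4.3]{DegliseJinKhan} and \cite[Thm.~4.5.2]{CisinskiDegliseEtale}.
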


It follows from the rigidity theorem of Cisinski--Déglise \cite[Thm.~4.5.2]{CisinskiDegliseEtale} that with finite coefficients, étale motivic cohomology agrees with usual étale cohomology, so this does recover the classical statement when we restrict to schemes.
Actually, even in the case of schemes this statement is new, since Gabber's statement \cite[Exp.~XVI, Cor.~3.1.2]{ILO} requires the schemes to admit ample line bundles.

The new ingredient we use here is the purity transformation (\rmkref{rmk:purity}) which generalizes Gabber's construction of Gysin maps \cite[Exp.~XVI, 2.3]{ILO}.
Neither the statement nor the proof of \thmref{thm:absolute purity} uses any derived geometry, but it is worth recalling that our construction of $\pur_f$ involves the normal bundle stack $N_{\sX/\sY}$, which is a classical $2$-Artin stack even when $\sX$ and $\sY$ are classical $1$-Artin stacks.

\begin{proof}[Proof of \thmref{thm:absolute purity}]
Let $v : Y \to \sY$ be a smooth surjection with $Y$ schematic, and form the homotopy cartesian square
  \begin{equation*}
    \begin{tikzcd}
      X \ar{r}{f_0}\ar{d}{u}
        & Y \ar{d}{v}
      \\
      \sX \ar{r}{f}
        & \sY.
    \end{tikzcd}
  \end{equation*}
The upper arrow $f_0$ is a locally of finite type morphism and $X$ (resp. $Y$) is a regular algebraic space (resp. regular scheme).
In terms of the purity transformation, the base change property of the fundamental class (\thmref{thm:fund/base change}) translates to the commutativity of the diagram (cf.~\cite[Prop.~2.5.4(ii)]{DegliseJinKhan})
  \begin{equation*}
    \begin{tikzcd}
      u^*f^*(\Lambda^\et_\sY)(d)[2d] \ar{r}{\pur_f}\ar[equals]{d}
        & u^*f^!(\Lambda^\et_\sY) \ar{d}{\mrm{Ex}^{*!}}
      \\
      f_0^*v^*(\Lambda^\et_\sY)(d)[2d] \ar{r}{\pur_{f_0}}
        & f_0^!v^*(\Lambda^\et_\sY).
    \end{tikzcd}
  \end{equation*}
The right-hand vertical arrow is the isomorphism induced by the exchange transformation $\mrm{Ex}^{*!}$ (\corref{cor:sixops/exchange}).
Therefore, it will suffice to replace $f$ by $f_0$ and thereby assume that $\sY=Y$ is a regular scheme and $\sX=X$ is a regular algebraic space.

We can find an étale surjection $p : U \to X$ such that $U$ is a (regular) scheme and $f\circ p : U \to Y$ is smoothable.
The functoriality property of the fundamental class (\thmref{thm:fund/functoriality}) translates to the commutativity of the diagram (cf.~\cite[Prop.~2.5.4(i)]{DegliseJinKhan})
  \begin{equation*}
    \begin{tikzcd}
      p^*f^*(\Lambda^\et_Y)(d)[2d] \ar{r}{\pur_f}\ar[equals]{d}
        & p^*f^!(\Lambda^\et_Y) \ar{r}{\pur_p}
        & p^!f^!(\Lambda^\et_Y) \ar[equals]{d}
      \\
      (f \circ p)^*(\Lambda^\et_Y)(d)[2d] \ar{rr}{\pur_{f\circ p}}
        &
        & (f\circ p)^!(\Lambda^\et_Y).
    \end{tikzcd}
  \end{equation*}
Since $p$ is étale, the upper right-hand arrow $\pur_p$ is invertible (\thmref{thm:sixops/stacks/purity}).
Therefore, replacing $X$ by $U$ and $f$ by $f\circ p : U \to Y$, we may assume that $f : X \to Y$ is a smoothable morphism between regular schemes.

Choose a factorization of $f$ through a closed immersion $i : X \to X'$ and a smooth morphism $g : X' \to Y$.
Since $\pur_g$ is invertible by \thmref{thm:sixops/stacks/purity}, applying the functoriality property again shows that we may replace $f$ by $i$ and thereby assume that $f=i$ is a closed immersion between regular schemes.

The assertion is that $\pur_i$ induces an isomorphism
  \begin{equation*}
     \Lambda^\et_X(d)[2d] \to i^!(\Lambda^\et_Y),
  \end{equation*}
or equivalently isomorphisms in étale motivic cohomology
  \begin{equation}\label{eq:pur_i in cohomology}
    \H^{2k-2c}(X, \Lambda^\et(k-c)) \to \H^{2k}_{X}(Y, \Lambda^\et(k))
  \end{equation}
for all integers $k\in\Z$, where $c=-d$ is the codimension of $i$.
In this situation the purity transformation $\pur_i$ is the same as the one constructed in \cite[4.3.1]{DegliseJinKhan}, and by \cite[4.4.3]{DegliseJinKhan} it agrees with the construction of \cite[2.4.6]{DegliseBivariant} when applied to the étale motivic cohomology spectrum.
The latter agrees, through the ridigity equivalence \cite[Thm.~4.5.2]{CisinskiDegliseEtale} identifying the étale motivic cohomology groups in \eqref{eq:pur_i in cohomology} with classical étale cohomology, with Gabber's construction in \cite[Exp.~XVI, 2.3]{ILO} by design.
Thus the claim follows from \cite[Exp.~XVI, Thm.~3.1.1]{ILO}.
\end{proof}

\begin{rmk}
The argument applies more generally to show that for an étale motivic spectrum $\sF$, absolute purity holds for locally of finite type representable morphisms of regular Artin stacks (the analogue of \thmref{thm:absolute purity}) if and only if it holds for closed immersions between regular schemes.
For example, this also applies to h-motivic cohomology \cite[Thm.~5.6.2]{CisinskiDegliseEtale}.
\end{rmk}

%%%%%%%%%%%%%%%%%%%%%%%%%%%%%%%%%%%%%%%%%%%%%%%%%%%%%%%%%%%%%%%%%%%%%%%%%%%

\appendix
\section{The six operations for \dA stacks}
\label{sec:sixops}

In this appendix we extend the six operations to \dA stacks.
The category of coefficients we use is $\SHet$, the étale-local motivic homotopy category, but the construction works for any motivic \inftyCat of coefficients in the sense of \cite[Chap.~2]{KhanThesis} that satisfies étale descent.
The notion of ``motivic \inftyCat of coefficients'' is a refinement of that of ``motivic triangulated category'' studied in \cite{CisinskiDegliseBook}, but every example of the latter that arises in practice can in fact be promoted to a motivic \inftyCat.
The $(\infty,1)$-categorical refinement is crucial for the construction below.
See \cite[Sect.~2]{ToenSurvey} for a quick introduction to the theory of \inftyCats.

The six operations in the (Nisnevich-local) motivic homotopy category $\SH$ were already constructed by Ayoub and Voevodsky for schemes.
They were extended to derived schemes by Khan \cite{KhanThesis}.
Below we begin by recording the extension from derived schemes to \da spaces; this is straightforward and will not come as a surprise to certain readers.
It is for the further extension to \dA stacks that we pass to the étale-local category, so that we can extend the operations essentially ``by descent''.

\ssec{Derived algebraic spaces}
\label{ssec:sixops/asp}

\begin{thm}\label{thm:sixops/asp/main}
The formalism of six operations on $\SH$ extends to \da spaces.
In particular:
\begin{thmlist}
  \item
  For every \da space $X$, there is a closed symmetric monoidal structure on $\SH(X)$.
  In particular, there are adjoint bifunctors $(\otimes, \uHom)$.

  \item\label{item:sixops/asp/f^*}
  For any morphism of \da spaces $f : X \to Y$, there is an adjunction
    $$ f^* : \SH(Y) \to \SH(X),
    \quad f_* : \SH(X) \to \SH(Y). $$
  The assignments $f \mapsto f^*$, $f \mapsto f_*$ are $2$-functorial.
  The functor $f^*$ is symmetric monoidal.

  \item\label{item:sixops/asp/f_!}
  For any locally of finite type morphism of \da spaces $f : X \to Y$, there is an adjunction
    $$ f_! : \SH(X) \to \SH(Y),
    \quad f^! : \SH(Y) \to \SH(X). $$
  The assignments, $f \mapsto f_!$, $f \mapsto f^!$ are $2$-functorial.

  \item\label{item:sixops/asp/compatibilities}
  The operation $f_!$ satisfies the base change and projection formulas against $f^*$.
  That is, for any cartesian\footnotemark~square
    \begin{equation*}
      \begin{tikzcd}
        X' \ar{r}{f'}\ar{d}{p}
          & Y' \ar{d}{q}
        \\
        X \ar{r}{f}
          & Y
      \end{tikzcd}
    \end{equation*}
  there are identifications
    \begin{equation*}
      q^*f_! = (f')_!p^*
    \end{equation*}
  and
    \begin{equation*}
      f_!(\sF) \otimes \sG = f_!(\sF \otimes f^*(\sG))
    \end{equation*}
  naturally in $\sF$ and $\sG$.
  There is a natural transformation $\alpha_f : f_! \to f_*$, functorial in $f$, which is invertible if $f$ is proper.
  \footnotetext{In fact, item \itemref{item:sixops/asp/localization} below implies that it suffices to assume that the square is cartesian on underlying classical schemes.}

  \item\label{item:sixops/asp/localization}
  Let $i : X \to Y$ be a closed immersion of \da spaces, with open complement $j$.
  Then the operation $i_* = i_!$ induces a fully faithful functor
    \begin{equation*}
      i_* : \SH(X) \to \SH(Y)
    \end{equation*}
  whose essential image is the kernel of $j^*$.
  In particular, if $i$ induces an isomorphism on underlying reduced classical stacks, then $i_*$ is an equivalence.

  \item\label{item:sixops/asp/homotopy}
  Let $X$ be a \da space and $\sE$ a locally free sheaf on $X$.
  If $p : \bV_X(\sE) \to X$ denotes the associated vector bundle, then the unit transformation
    \begin{equation*}
      \id \to p_*p^*
    \end{equation*}
  is invertible.

  \item\label{item:sixops/asp/Thom}
  There is a canonical map of presheaves of $\Einfty$-group spaces on the site of \da spaces,
    \begin{equation}\label{eq:K->PicSH}
      \K(-) \to \Aut(\SH(-)),
    \end{equation}
  from the algebraic K-theory of perfect complexes to the \inftyGrpd of auto-equivalences of $\SH$.
  For a perfect complex $\sE$ on a \da space $X$, we let $\Sigma^{\sE}$ denote the induced auto-equivalence of $\SH(X)$, and $\Sigma^{-\sE} = \Sigma^{\sE^\vee}$ its inverse.
  If $\sE$ is locally free, then we have
    \begin{equation*}
      \Sigma^{\sE} = s^*p^!,
      \qquad
      \Sigma^{-\sE} = s^!p^*,
    \end{equation*}
  where $p : \bV_X(\sE) \to X$ is the projection of the associated vector bundle and $s : X \to \bV_X(\sE)$ the zero section.

  \item\label{item:sixops/asp/purity}
  Let $f : X \to Y$ be a smooth morphism between \da spaces.
  Then there is a purity equivalence
    \begin{equation*}
      \pur_f : \Sigma^{\sL_{X/Y}}f^* = f^!
    \end{equation*}
  which is natural in $f$.
\end{thmlist}
\end{thm}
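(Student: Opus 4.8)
The plan is to bootstrap everything from the case of derived schemes, established in \cite{KhanThesis}, by Nisnevich descent. The two inputs are: first, that the whole six-functor formalism on derived schemes---not merely the categories $\SH(X)$ but the functors $f^*,f_*,f_!,f^!$ together with the coherences of \itemref{item:sixops/asp/compatibilities}---constitutes a Nisnevich (hyper)sheaf, which is part of what it means for $\SH$ to be a motivic $\infty$-category of coefficients in the sense of \cite[Chap.~2]{KhanThesis}; and second, that every derived algebraic space is Nisnevich-locally a derived scheme. The latter is elementary: by definition $X$ admits an étale atlas $W\to X$ by a derived scheme, and after shrinking $W$ around each point so as to trivialize the residue field extensions this becomes a Nisnevich cover, whose Čech nerve $W_\bullet$ has terms $W\fibprodR_X\cdots\fibprodR_X W$ that are étale over the derived scheme $W$, hence again derived schemes. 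Thus $X$ is the colimit of the simplicial derived scheme $W_\bullet$, and $W_\bullet$ is a Nisnevich hypercover of $X$.

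First I would make precise the packaging of the six operations on derived schemes as a single object---say a functor out of a suitable $\infty$-category of correspondences, as underlies \cite{KhanThesis}, or equivalently the pair $(\SH,\,f\mapsto f^*)$ equipped with exceptional functoriality---and record that, so packaged, it is a Nisnevich sheaf of $\infty$-categories-with-structure. Then for a derived algebraic space $X$ with Čech presentation $W_\bullet$ as above I would \emph{define} $\SH(X)$, with its entire structure, by the limit $\SH(X):=\lim_{[n]\in\bDelta}\SH(W_n)$, i.e.\ by right Kan extension along the inclusion of derived schemes into derived algebraic spaces; independence of the chosen atlas is immediate from the sheaf property, since any two atlases admit a common refinement. (For the exceptional functoriality one may alternatively construct $f_!$ by hand via Deligne's recipe, factoring $f$ through a compactification---available over an algebraic-space base---as $(\text{proper})_*\circ(\text{open})_!$; the two constructions agree because both descend.)

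With $\SH$ in hand on derived algebraic spaces, I would verify \itemref{item:sixops/asp/f^*}--\itemref{item:sixops/asp/purity} by reducing each assertion to derived schemes along an atlas, using that any morphism of derived algebraic spaces is Nisnevich-locally on source and target a morphism of derived schemes. The purely categorical and adjunction-theoretic statements---the closed monoidal structure \itemref{item:sixops/asp/f^*}, the adjunctions and $2$-functoriality in \itemref{item:sixops/asp/f^*}--\itemref{item:sixops/asp/f_!}, and base change and the projection formula \itemref{item:sixops/asp/compatibilities}---transfer at once, being detected on a Nisnevich cover. For localization \itemref{item:sixops/asp/localization} one uses that a closed immersion of derived algebraic spaces pulls back to a closed immersion of derived schemes, and that full faithfulness of $i_*$ together with the description of its essential image are Nisnevich-local on the target; the nil-invariance clause is inherited directly from \cite{KhanThesis}. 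Homotopy invariance \itemref{item:sixops/asp/homotopy}, the $\K$-theory action \itemref{item:sixops/asp/Thom}, and the purity equivalence \itemref{item:sixops/asp/purity} descend because vector bundles, perfect complexes, and smooth morphisms all pull back to objects of the same kind over an atlas, and because $\K(-)$ is a Nisnevich sheaf while $\Aut(\SH(-))$ extends together with $\SH$ itself---so that the map \eqref{eq:K->PicSH} is obtained by right Kan extension from the scheme case.

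The main obstacle will be the homotopy-coherent bookkeeping in the first two steps: formulating the statement ``the six-functor formalism is a Nisnevich sheaf valued in $\infty$-categories with extra structure'' and right Kan extending it, so that the whole web of compatibilities of \itemref{item:sixops/asp/compatibilities} is produced simultaneously rather than reconstructed relation by relation. The one genuinely delicate point inside this is the interaction of the gluing with the definition of $f_!$: since $f_!$ a priori depends on a chosen compactification, one must verify it is compatible with Nisnevich base change on the target and with refinement of the atlas---equivalently, that $f_!$ is \emph{characterized} by base change against $f^*$ together with its values on proper morphisms and on open immersions, all of which manifestly satisfy descent. Granting this characterization, the remaining verifications are routine.
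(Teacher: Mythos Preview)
Your proposal is correct and rests on the same key observation as the paper---that derived algebraic spaces are Nisnevich-locally (affine) derived schemes, so one can bootstrap from \cite{KhanThesis} by Nisnevich descent---but the execution is different. The paper does not package the six-functor formalism as a single sheaf and right Kan extend; instead it reruns the arguments of \cite{KhanThesis} with explicit pinpoint modifications: replace ``Zariski'' by ``Nisnevich'' in the relevant propositions of Chap.~0, substitute the Nagata compactification theorem for algebraic spaces \cite{ConradLieblichOlsson} in the construction of $f_!$, and swap the proof of the localization theorem for the one in \cite{KhanLocalization}; Chap.~2 then goes through \emph{mutatis mutandis}. This buys concreteness---each assertion reduces to a named result with a named tweak---whereas your approach is cleaner conceptually but, as you correctly flag, front-loads the work into formalizing ``the six-functor package is a Nisnevich sheaf'' and checking that the descended $f_!$ agrees with what compactification gives. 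The paper also singles out item~\itemref{item:sixops/asp/Thom} as needing more than a descent argument: the map $\K \to \Aut(\SH)$ is imported from \cite{BachmannHoyois} on classical schemes, factored through homotopy-invariant K-theory $\mrm{KH}$, and then extended to derived algebraic spaces via right Kan extension together with the derived nil-invariance of both $\mrm{KH}$ and $\SH$; your treatment of this item is rather brisk by comparison. Finally, the paper notes that $f^!$ is initially only constructed for separated finite-type morphisms and is extended to locally-of-finite-type morphisms via Zariski descent using the Liu--Zheng or Gaitsgory--Rozenblyum coherence machinery, a wrinkle your outline omits.
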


This was proven in \cite{KhanThesis} for derived schemes so I only describe the modifications that need to be made for \da spaces.
The idea is that derived algebraic spaces are \emph{Nisnevich}-locally affine (see e.g. the proof of \cite[Prop.~2.2.13]{KhanLocalization}), which is good enough since $\SH$ satisfies Nisnevich descent.
Thus in Chap.~0, one needs to replace ``Zariski'' by ``Nisnevich'' in Propositions~5.3.5 and 5.6.2 (the proofs don't change).
In the proof of Proposition~6.3.4, one needs to replace the reference to [Con07] by \cite{ConradLieblichOlsson}, where Nagata compactifications are constructed for classical algebraic spaces.
The only modification necessary in Chap.~1 is that the proof of Proposition~2.2.9 needs to be replaced by the proof of \cite[Prop.~2.2.13]{KhanLocalization}.
This extends the proof of the localization theorem \cite[Chap.~1, Thm.~7.4.3]{KhanThesis} to \da spaces.
Chap.~2 then goes through \emph{mutatis mutandis} to give the six operations on \da spaces.

Only item \itemref{item:sixops/asp/Thom} requires further explanation, as the map \eqref{eq:K->PicSH} encodes much more coherence of the assignment $\sE \mapsto \Sigma^{\sE}$ than was constructed in \cite{KhanThesis}.
On the site of classical schemes, such a map is constructed in \cite[Subsect.~16.2]{BachmannHoyois}.
It factors through homotopy invariant K-theory $\mrm{KH}$ \cite[Rem.~16.11]{BachmannHoyois}.
By right Kan extension, the map $\mrm{KH} \to \Aut(\SH)$ extends uniquely to the site of classical algebraic spaces.
By derived nil-invariance of $\mrm{KH}$ and $\SH$, see \cite[Subsect.~5.4]{KhanKblow} and \cite[Thm.~7.4.3]{KhanThesis} respectively, we obtain a unique extension of this map to the site of \da spaces, and we define \eqref{eq:K->PicSH} to be the composite $\K \to \mrm{KH} \to \Aut(\SH)$.

Strictly speaking, this only gives the operation $f^!$ for separated morphisms of finite type.
Using Zariski descent and the homotopy coherence of the six functor formalism, one extends this to locally of finite type morphisms.
Indeed, the coherence of the data in \itemref{item:sixops/asp/f_!} and \itemref{item:sixops/asp/compatibilities} can be encoded using the formalism of Gaitsgory--Rozenblyum \cite[Part~III]{GaitsgoryRozenblyum} (as done in \cite[Chap.~2, Thm.~5.1.2]{KhanThesis}) or that of Liu--Zheng \cite{LiuZheng} (as done in \cite[Sect.~9.4]{RobaloThesis}); the two formalisms are almost equivalent, as explained in \cite[Part~III, 1.3]{GaitsgoryRozenblyum}.
Then an easy application of the ``DESCENT'' program \cite[Thm.~4.1.8]{LiuZheng} gives the desired extension.

\ssec{Derived algebraic stacks}
\label{ssec:sixops/stack}

We begin with the presheaf of \inftyCats
  \begin{equation*}
    X \mapsto \SH(X),
    \quad f \mapsto f^*
  \end{equation*}
on the site of \da spaces.
This is a Nisnevich sheaf, and as such is right Kan-extended from the site of derived schemes or even affine derived schemes.

Now let $\SHet$ denote its étale localization.
In other words, we force $\SHet$ to satisfy descent for \v{C}ech covers in the étale topology.
We then take its right Kan extension to the site of \dA stacks.
This is thus the unique extension of $\SHet$ to an étale sheaf on \dA stacks.

We can be more explicit.
If $\sX$ is a \dA stack and $p : X \to \sX$ is a smooth surjection with $X$ a \da space, then $p$ is a covering in the étale topology so the \inftyCat $\SHet(\sX)$ fits into a homotopy limit diagram of \inftyCats
  \begin{equation}\label{eq:SHet totalization}
    \SHet(\sX) \xrightarrow{p^*} \SHet(X)
      \rightrightarrows \SHet(X\fibprodR_\sX X)
      \rightrightrightarrows \SHet(X\fibprodR_\sX X \fibprodR_\sX X)
      \rightrightrightrightarrows \cdots.
  \end{equation}
More canonically, $\SHet(\sX)$ is identified with the homotopy limit
  \begin{equation}\label{eq:SHet limit}
    \SHet(\sX) = \lim \SHet(X)
  \end{equation}
taken over the \inftyCat $\Lis_\sX$ of all smooth morphisms $u : X \to \sX$ with $X$ schematic.
Roughly speaking, objects $\sF \in \SHet(\sX)$ may be viewed as collections $(u^*\sF)_u$, indexed over $(u : X \to \sX) \in \Lis_\sX$, compatible up to coherent homotopies.
In particular, the family of functors $u^*$ is conservative as $u$ varies in $\Lis_\sX$.

\begin{thm}\label{thm:sixops/stacks/main}\leavevmode
\begin{thmlist}
  \item
  For every \dA stack $\sX$, there is a closed symmetric monoidal structure on $\SH(\sX)$.
  In particular, there are adjoint bifunctors $(\otimes, \uHom)$.

  \item\label{item:six operations for Artin stacks/f^*}
  For any morphism of \dA stacks $f : \sX \to \sY$, there is an adjunction
    $$ f^* : \SHet(\sY) \to \SHet(\sX),
    \quad f_* : \SHet(\sX) \to \SHet(\sY). $$
  The assignments $f \mapsto f^*$, $f \mapsto f_*$ are $2$-functorial.
  
  \item\label{item:six operations for Artin stacks/f^!}
  For any locally of finite type morphism of \dA stacks $f : \sX \to \sY$, there is an adjunction
    $$ f_! : \SHet(\sX) \to \SHet(\sY),
    \quad f^! : \SHet(\sY) \to \SHet(\sX). $$
  The assignments $f \mapsto f_!$, $f \mapsto f^!$ are $2$-functorial.
  
  \item\label{item:six operations for Artin stacks/f_!}
  The operation $f_!$ satisfies the base change\footnotemark~and projection formulas against $g^*$, and $f^!$ satisfies base change against $g_*$.
  If $f$ is representable by \dDM stacks, then there is a natural transformation $\alpha_f : f_! \to f_*$, functorial in $\sF$.
  If $f$ is $0$-representable and proper, then $\alpha_f$ is invertible.
  \footnotetext{From \thmref{thm:sixops/stacks/localization} below it follows that the base change formula applies also to squares that are only cartesian on underlying classical stacks.}
\end{thmlist}
\end{thm}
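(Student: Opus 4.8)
The plan is to establish Theorem~\ref{thm:sixops/stacks/main} by the familiar device of extending a six-functor formalism ``by descent'', following the template of Liu--Zheng \cite{LiuZheng}, who carried this out for étale cohomology of higher Artin stacks. There are three layers. First, I would pass from $\SH$ to $\SHet$ on \da spaces: since $\SHet$ is again a motivic $\infty$-category of coefficients in the sense of \cite[Chap.~2]{KhanThesis} --- étale hyperlocalization preserves the defining axioms, cf.~\cite{CisinskiDegliseEtale} --- the conclusions of \thmref{thm:sixops/asp/main}, together with the base change, projection, localization \itemref{item:sixops/asp/localization}, homotopy invariance \itemref{item:sixops/asp/homotopy}, and purity \itemref{item:sixops/asp/purity} properties, all hold verbatim for $\SHet$ on \da spaces.

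Second, I would encode this formalism homotopy-coherently, exactly as was done for derived schemes in \cite[Chap.~2]{KhanThesis}: using the $(\infty,2)$-categorical machine of Gaitsgory--Rozenblyum \cite[Part~III]{GaitsgoryRozenblyum}, or that of Liu--Zheng as applied in \cite[Sect.~9.4]{RobaloThesis}, the operations on \da spaces assemble into a single functor out of an $\infty$-category of correspondences (with arbitrary morphisms allowed for $*$-pullback and locally of finite type morphisms for $!$-pushforward), valued in presentable $\infty$-categories and satisfying base change. Third, I would extend this functor along smooth atlases to \dA stacks, inductively on the Artin level, by an application of the ``DESCENT'' program \cite[Thm.~4.1.8]{LiuZheng}. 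The one substantive hypothesis to verify is that smooth surjections are of universal $\SHet$-descent; this holds because $\SHet$ satisfies étale (hence Nisnevich) descent and is homotopy invariant, while a smooth morphism is Nisnevich-locally on the source étale over an affine space over the target. Unwinding the output gives parts (i)--(iii), with $\SHet(\sX)$ identified with the homotopy limit \eqref{eq:SHet limit}, and the base change and projection formulas of (iv); for a general base $\sX$ one reduces, via a smooth atlas, to the case of \da spaces.

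It then remains to produce the comparison $\alpha_f : f_! \to f_*$ for $f$ representable by \dDM stacks. I would choose a smooth atlas $Y \to \sY$; the pullback of $f$ along the terms of the \v{C}ech nerve of $Y \to \sY$ is representable by \da spaces, so \thmref{thm:sixops/asp/main}\itemref{item:sixops/asp/compatibilities} furnishes the transformation $\alpha$ levelwise, and these descend --- compatibly with functoriality in $f$ --- to $\alpha_f$. When $f$ is moreover $0$-representable and proper, invertibility of $\alpha_f$ can be tested after pulling back along the atlas of $\sY$, where it reduces to the known invertibility for proper morphisms of \da spaces from \thmref{thm:sixops/asp/main}.

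The main difficulty is not any single property but the homotopy coherence throughout: organizing the operations on \da spaces into a genuine functor on correspondences rather than a list of triangulated-level identities, and checking that $\SHet$ satisfies the hypotheses of the DESCENT theorem --- above all the universal $\SHet$-descent along smooth surjections. The cited machinery is designed precisely to process this kind of input, so the real work lies in verifying that the derived, étale-localized theory of \da spaces feeds into it correctly.
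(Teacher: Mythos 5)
Your proposal takes essentially the same route as the paper: right-Kan-extend the $(\infty,2)$-categorical six-functor data on \da spaces along the étale/smooth topology to \dA stacks, using the Gaitsgory--Rozenblyum or Liu--Zheng machinery to handle coherence. The paper does precisely this, citing \cite[Chap.~8, Thm.~6.1.5]{GaitsgoryRozenblyum} and the \textsc{descent} theorem \cite[Thm.~4.1.8]{LiuZheng}, and your atlas/Čech-nerve argument for $\alpha_f$ matches the spirit of the paper's treatment of part (iv).

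Two remarks on the details. First, you flag that the ``one substantive hypothesis to verify'' is that smooth surjections are of universal $\SHet$-descent, but the justification you give is not the right one. Homotopy invariance together with the Nisnevich-local model $V \xrightarrow{\text{\'et}} \bA^n_U \to U$ does not yield descent: $p^*$ for $p : \bA^n_U \to U$ is fully faithful but not an equivalence, so $\SHet$ does not satisfy Čech descent along affine-space projections, and the terms of the Čech nerve of a smooth surjection are not iterated affine bundles anyway. The correct (and standard) reason is that smooth surjections admit sections \'etale-locally, so the Čech nerve of a smooth atlas is \'etale-locally split and any \'etale sheaf satisfies descent along it; this is exactly why the paper phrases the Kan extension in terms of the \'etale topology and then observes that a smooth atlas is a cover in that topology. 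Second, the paper supplements the abstract gluing with an explicit hands-on construction of $f^!$: it right-Kan-extends the $!$-functoriality $\SHet^!$ separately and identifies it with the $*$-extension $\SHet$ via the twist $\Theta_\sX$ built from $\Sigma^{\sL_{X/\sX}}$. You obtain $f^!$ only through the \textsc{descent} black box, which is sufficient for existence, but the explicit $\Theta$ description is what makes the purity equivalence on stacks (\thmref{thm:sixops/stacks/purity}) and the formula \eqref{eq:f^! commutes with u^L*} immediate; without some version of it you would need a separate argument to relate $u^*f^!$ to $f_0^!v^*$ for smooth atlases $u,v$.
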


On the site of \da spaces, we may view $\SHet$ as a presheaf valued in the \inftyCat of presentably symmetric monoidal \inftyCats and symmetric monoidal left-adjoint functors.
Since the forgetful functor to (large) \inftyCats preserves limits \cite[Prop.~5.5.3.13]{LurieHTT}, the right Kan extension can be performed either way without changing the underlying presheaf of \inftyCats.
In particular, we find that $\SHet(\sX)$ is a presentably symmetric monoidal \inftyCat for every \dA stack $\sX$ and that $f^*$ is a symmetric monoidal left-adjoint functor for every morphism $f$.
We let $\otimes$ denote the monoidal product, $\uHom$ the internal hom, and $f_*$ the right adjoint of $f^*$.

Similarly, if we restrict the presheaf $\SHet$ to \emph{smooth} morphisms between \da spaces, then it takes values in presentable \inftyCats and right adjoint functors (as follows from \thmref{thm:sixops/asp/main}\itemref{item:sixops/asp/purity}).
By \cite[Thm.~5.5.3.18]{LurieHTT} its right Kan extension to \dA stacks will have the same property; that is, $f^*$ admits a left adjoint $f_\sharp$ for every smooth morphism $f$ of \dA stacks.

Let $\SHet^!$ denote the \'etale sheaf on the site of \da spaces, and locally of finite type morphisms, given by
  \begin{equation*}
    X \mapsto \SHet(X),
    \qquad f \mapsto f^!
  \end{equation*}
and take its right Kan extension to \dA stacks.
For every $\sX$ there is then a canonical equivalence
  \begin{equation*}
    \Theta_\sX : \SHet(\sX) \to \SHet^!(\sX)
  \end{equation*}
determined by the property that
  \begin{equation*}
    u^!(\Theta_\sX(\sF)) = \Sigma^{\sL_{X/\sX}} u^*(\sF)
  \end{equation*}
for all $u : X \to \sX$ in $\Lis_\sX$.
For any morphism $f : \sX \to \sY$, we define $f^! : \SHet(\sY) \to \SHet(\sX)$ by $f^! =  \Theta_\sX^{-1} \circ f^! \circ \Theta_\sY$.
More concretely, $f^!$ is determined by the fact that for any commutative square
  \begin{equation*}
    \begin{tikzcd}
      X \ar{r}{f_0}\ar{d}{u}
        & Y \ar{d}{v}
      \\
      \sX \ar{r}{f}
        & \sY
    \end{tikzcd}
  \end{equation*}
with $u$ and $v$ smooth and $f_0$ a morphism of \da spaces, we have
  \begin{equation*}
    u^*f^!(\sF) = \Sigma^{f_0^*(\sL_{Y/\sY}) - \sL_{X/\sX}} f_0^!(v^*\sF)
  \end{equation*}
for all $\sF \in \SHet(\sY)$, or equivalently
  \begin{equation}\label{eq:f^! commutes with u^L*}
    \Sigma^{\sL_{X/\sX}} u^*f^!(\sF) = f_0^! \Sigma^{\sL_{Y/\sY}} v^*(\sF).
  \end{equation}
Moreover, these identifications are subject to a homotopy coherent system of compatibilities as $f$ varies.

On $\SHet^!$, the operation $f^!$ automatically admits a left adjoint $f_!$ for every morphism $f$.
Indeed, the right Kan extension can be computed in the \inftyCat of presentable \inftyCats and right adjoint functors (as the forgetful functor preserves limits \cite[Thm.~5.5.3.18]{LurieHTT}).
This induces an operation $f_! : \SHet(\sX) \to \SHet(\sY)$ by $f_! =  \Theta_\sY^{-1} \circ f_! \circ \Theta_\sX$, so that
  \begin{equation*}
    f_! u_\sharp \Sigma^{-\sL_{X/\sX}} = v_\sharp \Sigma^{-\sL_{Y/\sY}} (f_0)_!
  \end{equation*}
for all commutative squares as above.

As mentioned in \ssecref{ssec:sixops/asp}, all the data in \thmref{thm:sixops/stacks/main} can be encoded using the formalism of either Gaitsgory--Rozenblyum \cite[Part~III]{GaitsgoryRozenblyum} or Liu--Zheng \cite{LiuZheng}.
In the former case, one may apply \cite[Chap.~8, Thm.~6.1.5]{GaitsgoryRozenblyum} (cf. \cite[Chap.~5, Thm.~3.4.3]{GaitsgoryRozenblyum}, \cite[Sect.~2.2]{RicharzScholbach}) to glue together the required data from its restriction to algebraic spaces (already constructed in \thmref{thm:sixops/asp/main}), via an $(\infty,2)$-categorical right Kan extension.
Alternatively, we apply the ``DESCENT'' program of \cite[Thm.~4.1.8]{LiuZheng}, just as in \cite[Subsect.~5.4]{LiuZheng}.

Under certain assumptions the identification $f_! = f_*$ can be extended to non-representable proper morphisms:

\begin{thm}\label{thm:f_!=f_* for X DM}
Let $f : \sX \to \sY$ be a morphism of \dA stacks that is representable by \dDM stacks.
Assume that there exists a finite surjection $g : Z \to \sX$ with $Z$ an algebraic space.
For $\sF \in \SHet(\sX)$, consider the morphism
  \begin{equation*}
    \alpha_f : f_!(\sF) \to f_*(\sF)
  \end{equation*}
induced by the natural transformation $\alpha_f$ (\thmref{thm:sixops/stacks/main}\itemref{item:six operations for Artin stacks/f_!}).
If $f$ is proper and $\sF$ satisfies descent for finite surjections, then this morphism is invertible.
In particular, this applies to the rational motivic cohomology spectrum $\Q_\sX$ (\examref{exam:Q}), the rational algebraic cobordism spectrum $\MGL_{\Q,\sX}$ (\examref{exam:MGL_et}), or more generally any $\MGL_{\Q,\sX}$-module.
\end{thm}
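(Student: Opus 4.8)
The approach is descent along the given finite surjection $g\colon Z\to\sX$: one reduces the assertion to the identification $f_!\simeq f_*$ for proper morphisms of derived algebraic spaces (part of \thmref{thm:sixops/asp/main}\itemref{item:sixops/asp/compatibilities}), handling the finite maps that occur along the way with \thmref{thm:sixops/stacks/main}\itemref{item:six operations for Artin stacks/f_!}. First I would reduce to the case where $\sY=Y$ is a scheme. Choosing a smooth surjection $v\colon Y\to\sY$ with $Y$ a scheme and forming the cartesian square with $v'\colon\sX'\to\sX$ and $f'\colon\sX'\to Y$, base change for $f_!$ (\thmref{thm:sixops/stacks/main}\itemref{item:six operations for Artin stacks/f_!}) and smooth base change for $f_*$ (valid since $v$ is smooth) identify $v^{*}\alpha_f(\sF)$ with $\alpha_{f'}((v')^{*}\sF)$; since the functors $v^{*}$ are jointly conservative as $v$ ranges over a smooth atlas, it suffices to treat $f'$. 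As $f'$ is again proper and representable by derived Deligne--Mumford stacks, $(v')^{*}\sF$ again satisfies descent for finite surjections, and $Z\fibprodR_{\sX}\sX'\to\sX'$ is a finite surjection from a derived algebraic space, we may assume from now on that $\sY=Y$ is a scheme, so that $\sX$ is a derived Deligne--Mumford stack.

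Let $Z_\bullet\to\sX$ be the \v{C}ech nerve of $g\colon Z=Z_0\to\sX$. Each $g_n\colon Z_n\to\sX$ is finite -- hence proper and $0$-representable (even representable by schemes) -- and $Z_n$ is a derived algebraic space, so each composite $f_n:=f\circ g_n\colon Z_n\to Y$ is a proper morphism of derived algebraic spaces. I claim $\alpha_f$ is invertible on $(g_n)_{*}g_n^{*}\sF$ for every $n$. Since $g_n$ is proper and $0$-representable, $(g_n)_{!}\simeq(g_n)_{*}$ by \thmref{thm:sixops/stacks/main}\itemref{item:six operations for Artin stacks/f_!}; using the $2$-functoriality of $(-)_{!}$ and $(-)_{*}$ and the compatibility of $\alpha$ with composition, the transformation $\alpha_{f_n}\colon(f_n)_{!}\to(f_n)_{*}$ is identified, under $(f_n)_{!}\simeq f_{!}(g_n)_{!}\simeq f_{!}(g_n)_{*}$ and $(f_n)_{*}\simeq f_{*}(g_n)_{*}$, with $\alpha_f$ evaluated at $(g_n)_{*}g_n^{*}\sF$. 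But $\alpha_{f_n}$ is invertible by \thmref{thm:sixops/asp/main}\itemref{item:sixops/asp/compatibilities} because $f_n$ is a proper morphism of derived algebraic spaces; hence so is $\alpha_f$ on $(g_n)_{*}g_n^{*}\sF$. Since any object in the essential image of $(g_n)_{*}$ factors through that of $g_{*}=(g_0)_{*}$ by a face map, $\alpha_f$ is invertible on the entire essential image of $g_{*}$.

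Finally, I would reassemble by descent. The hypothesis that $\sF$ satisfies descent for finite surjections means that $\sF\to\Tot\big((g_\bullet)_{*}g_\bullet^{*}\sF\big)$ is an equivalence. Applying $f_*$, which preserves totalizations, and using the previous step levelwise yields a chain
\[
  f_!\sF\;\longrightarrow\;\Tot\big(f_!(g_\bullet)_{*}g_\bullet^{*}\sF\big)\;\xrightarrow{\ \sim\ }\;\Tot\big(f_*(g_\bullet)_{*}g_\bullet^{*}\sF\big)\;\simeq\;f_*\sF
\]
whose composite is $\alpha_f(\sF)$. The only non-formal step -- and the one I expect to be the main obstacle -- is the first arrow: that $f_!$ commutes with this particular totalization. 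This is where the finite descent hypothesis must really be used. For the rational examples in the statement one can argue through a transfer: combining the localization sequence of \thmref{thm:sixops/stacks/localization} with dévissage reduces to the locus on which $g$ is finite locally free, and there the transfer $\tfrac{1}{\deg g}\mathrm{tr}_{g}$ exhibits $\sF$ as a retract of $g_{*}g^{*}\sF$; since retracts are preserved by the exact functors $f_!$ and $f_*$, the arrow $\alpha_f(\sF)$ is a retract in the arrow category of the invertible arrow $\alpha_f(g_{*}g^{*}\sF)$ furnished by the second step. For a general $\sF$ satisfying finite descent one argues instead that, $g$ being finite and hence of relative dimension zero, the descent totalization converges strongly and is computed by a uniformly bounded partial totalization, which $f_!$, being exact, preserves.
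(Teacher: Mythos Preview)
Your overall strategy is the same as the paper's: reduce, via descent along the \v{C}ech nerve of $g$, to showing that $\alpha_f$ is invertible on each $(g_n)_*(g_n)^*\sF$, and then deduce this from the functoriality of $\alpha$ together with the invertibility of $\alpha_{g_n}$ and $\alpha_{f\circ g_n}$ for $0$-representable proper morphisms. Your preliminary reduction to $\sY$ a scheme is harmless but not needed: the paper observes directly that for any finite surjection $h\colon W\to\sX$ with $W$ an algebraic space the composite $f\circ h$ is already $0$-representable and proper (this uses that $f$ is representable by Deligne--Mumford stacks, so the relevant fibre products have discrete automorphism groups), and then \thmref{thm:sixops/stacks/main}\itemref{item:six operations for Artin stacks/f_!} applies without shrinking the base.

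You are right to isolate the passage from the levelwise statement back to $\sF$ as the nonformal step; the paper simply asserts ``it will suffice to show\ldots'' without further comment. Your retract argument via transfers, combined with a localization/d\'evissage reduction to the locus where $g$ is finite locally free, is a reasonable way to close this for the rational examples in the ``in particular'' clause (and hence for any $\MGL_{\Q,\sX}$-module), which is the substantive content. Your proposed fix for a general $\sF$, however, does not stand: finiteness of $g$ does not force the \v{C}ech totalization to be computed by a uniformly bounded partial totalization---already for a finite group action the homotopy-fixed-point totalization is genuinely infinite---so exactness of $f_!$ alone does not let you push it through the limit. For the general statement one really needs the descent hypothesis to furnish a splitting exhibiting $\sF$ as a retract of $g_*g^*\sF$ (so that $\sF$ lies in the thick subcategory on which $\alpha_f$ is known to be invertible), or else a separate convergence argument specific to the coefficient, which you have not supplied.
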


\begin{proof}
Since $\sF$ satisfies descent along the \v{C}ech nerve of $g : Z \to \sX$, it will suffice to show that
  \begin{equation*}
    \alpha_f : f_!(h_*h^*\sF) \to f_*(h_*h^*\sF)
  \end{equation*}
is invertible for every finite surjection $h : W \to \sX$ with $W$ an algebraic space.
Since $h$ and $f\circ h$ are $0$-representable and proper, $\alpha_h$ and $\alpha_{f\circ h}$ are invertible by \thmref{thm:sixops/stacks/main}\itemref{item:six operations for Artin stacks/f_!}.
Therefore the claim follows from the functoriality of $\alpha_f$ in $f$.
It applies to $\Q_\sX$ because the latter satisfies descent for the h topology \cite[Cor.~5.5.5]{CisinskiDegliseEtale}.
\end{proof}

\begin{exam}\label{exam:finite parametrization}
Note that $\sX$ admits a finite cover by an algebraic space if and only if the classical stack $\sX_\cl$ does.
This is the case for example if $\sX_\cl$ has quasi-finite separated diagonal \cite[Thm.~B]{RydhApprox}, or if $\sX_\cl$ has quasi-finite diagonal and is of finite type over a noetherian scheme \cite[Thm.~2.7]{EdidinHassettKreschVistoli}.
In particular this holds if $\sX_\cl$ is a Deligne--Mumford stack.
\end{exam}

\begin{thm}[Localization]\label{thm:sixops/stacks/localization}
Let $i : \sX \to \sY$ be a closed immersion of \dA stacks, with open complement $j$.
Then the operation $i_* = i_!$ induces a fully faithful functor
  \begin{equation*}
    i_* : \SHet(\sX) \to \SHet(\sY)
  \end{equation*}
whose essential image is the kernel of $j^*$.
In particular, if $i$ induces an isomorphism on underlying reduced classical stacks, then $i_*$ is an equivalence.
\end{thm}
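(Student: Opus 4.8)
The plan is to deduce \thmref{thm:sixops/stacks/localization} from its counterpart for \da spaces, \thmref{thm:sixops/asp/main}\itemref{item:sixops/asp/localization}, by smooth descent. The basic observation is that a closed immersion of \dA stacks is $0$-representable and proper, so $i_! = i_*$ (\thmref{thm:sixops/stacks/main}\itemref{item:six operations for Artin stacks/f_!}) and the base change formula for $i_! = i_*$ against an arbitrary $g^*$ is available.

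First I would fix a smooth surjection $v : Y \to \sY$ with $Y$ a \da space and set $X := \sX \fibprodR_\sY Y$; since $i$ is representable, $X$ is again a \da space, sitting in a homotopy cartesian square with the closed immersion $i_Y : X \to Y$, the smooth surjection $u : X \to \sX$, and $v$. Base change against $v^*$ then provides a canonical equivalence $v^* i_* \simeq (i_Y)_* u^*$, which together with $u^* i^* \simeq (i_Y)^* v^*$ yields compatible identifications of $u^* i^* i_*$ with $(i_Y)^* (i_Y)_* u^*$ and of $v^* i_* i^*$ with $(i_Y)_* (i_Y)^* v^*$. Since a smooth surjection is an étale covering, $\SHet$ satisfies descent along the \v{C}ech nerve of $u$ (resp.\ $v$), so $u^*$ (resp.\ $v^*$) is conservative.

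Full faithfulness of $i_*$ is equivalent to invertibility of the counit $i^* i_* \to \id$. Applying the conservative functor $u^*$ and transporting through the identification above, this reduces to invertibility of the counit $(i_Y)^*(i_Y)_* \to \id$, i.e.\ to full faithfulness of $(i_Y)_*$, which is the \da space case. For the essential image, the open complement $j : \sU \to \sY$ of $i$ sits in a cartesian square with $\sX \fibprodR_\sY \sU = \emptyset$, so base change gives $j^* i_* \simeq 0$ and the essential image lies in $\Ker(j^*)$; conversely, if $\sG \in \SHet(\sY)$ satisfies $j^*\sG = 0$, then $j_Y^* v^*\sG = 0$ (where $j_Y$ is the open complement of $i_Y$), so the \da space statement shows that $v^*$ of the unit $\sG \to i_* i^* \sG$ is invertible, hence the unit itself is invertible by conservativity and $\sG$ lies in the essential image of $i_*$. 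Finally, if $i$ is an isomorphism on underlying reduced classical stacks, then $\sU$ has empty classical truncation, hence $\sU = \emptyset$ and $\SHet(\sU) = 0$; thus $\Ker(j^*) = \SHet(\sY)$ and the fully faithful functor $i_*$ is an equivalence.

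The main obstacle I expect is not conceptual but a matter of coherence: one must check that the base change equivalences are genuinely compatible with the units and counits of the adjunction $(i^*, i_*)$, so that the reductions above identify the canonical natural transformations rather than merely abstractly equivalent objects, and that this holds naturally as the atlas varies. As in the construction of the six operations themselves, this bookkeeping is handled cleanly within the formalism of Gaitsgory--Rozenblyum \cite{GaitsgoryRozenblyum} or Liu--Zheng \cite{LiuZheng}; in effect one is checking that ``localization'' is a smooth-local condition, after which \thmref{thm:sixops/asp/main}\itemref{item:sixops/asp/localization} does the rest.
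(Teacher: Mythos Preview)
Your proposal is correct and follows essentially the same route as the paper: reduce full faithfulness to invertibility of the counit $i^*i_* \to \id$, pull back along a smooth atlas and use base change plus conservativity to identify it with the counit for the induced closed immersion of \da spaces, then invoke \thmref{thm:sixops/asp/main}\itemref{item:sixops/asp/localization}; similarly for the unit $\sG \to i_*i^*\sG$ when $j^*\sG = 0$. The paper phrases the atlas as schematic and cites the scheme-level localization theorem directly, whereas you cite the algebraic-space version, but this is an inessential difference. Your closing paragraph about coherence is fair commentary, though in practice the compatibility of the base change isomorphism with the (co)unit is automatic from the way $\mrm{Ex}^{*}_{*}$ is defined, so no heavy machinery is needed here.
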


\begin{proof}
For fully faithfulness it suffices to show that the co-unit $i^*i_* \to \id$ is invertible.
After base change along a smooth atlas $v : Y \to \sY$ with $Y$ schematic, we get a closed immersion $i_0 : X \to Y$ and an induced atlas $u : X \to \sX$.
It suffices to show the co-unit becomes invertible after applying $u^*$ on the left, in which case it is identified with $i_0^*(i_0)_*u^*(\sF) \to u^*(\sF)$, by the base change formula (\thmref{thm:sixops/stacks/main}\itemref{item:six operations for Artin stacks/f_!}).
This is invertible by the localization theorem for derived schemes (\cite[Chap.~1, Cor.~7.4.9]{KhanThesis}).

Since $\sX \fibprod_\sY (\sY\setminus \sX)$ is empty, the base change formula shows that $j^*i_* = 0$.
It remains to show that if $\sF \in \SHet(\sY)$ satisfies $j^*(\sF) = 0$, then the unit map $\sF \to i_*i^*(\sF)$ is invertible.
By descent we reduce again to the schematic case which is \cite[Chap.~1, Cor.~7.4.7]{KhanThesis}.
\end{proof}

Thanks to David Rydh for the idea of the inductive argument in the proof below.

\begin{prop}[Homotopy invariance]\label{prop:sixops/stacks/homotopy invariance}
Let $\sX$ be a \dA stack and $\sE$ a perfect complex on $\sX$ of Tor-amplitude $[-k,1]$, for $k\ge -1$.
If $\pi : \bV_\sX(\sE[-1]) \to \sX$ denotes the associated vector bundle stack, then the unit transformation
  \begin{equation*}
    \id \to \pi_*\pi^*
  \end{equation*}
is invertible.
\end{prop}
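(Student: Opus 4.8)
The plan is to contract the vector bundle stack onto its zero section by means of the scalar action of $\A^1$, and to combine this with $\A^1$-invariance of $\SHet$. Write $\bV := \bV_\sX(\sE[-1])$, let $\pi : \bV\to\sX$ be the projection and $s : \sX\to\bV$ the zero section, so that $\pi\circ s = \id_\sX$. Since $\bV$ represents the functor carrying an affine derived scheme $T$ over $\sX$ to the $\infty$-groupoid of $\sO_T$-linear maps $\sE[-1]|_T\to\sO_T$, multiplication of co-sections by global functions defines a morphism of \dA stacks
\[
  m : \A^1_\bV = \A^1_\sX\fibprodR_\sX\bV \longrightarrow \bV
\]
over $\sX$, such that $m\circ 1_\bV = \id_\bV$ and $m\circ 0_\bV = s\circ\pi$, where $0_\bV, 1_\bV : \bV\to\A^1_\bV$ denote the zero and unit sections of $\A^1_\bV\to\bV$. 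In other words, $m$ is an $\A^1$-homotopy between the two endomorphisms $\id_\bV$ and $s\circ\pi$ of $\bV$.

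Granting for the moment the $\A^1$-invariance statement that $\pr^* : \SHet(\bV)\to\SHet(\A^1_\bV)$ is an equivalence, observe that any section $c$ of $\pr$ satisfies $c^*\circ\pr^* = (\pr\circ c)^* = \id$, so $c^* = (\pr^*)^{-1}$ independently of $c$; in particular $0_\bV^* = 1_\bV^*$. Composing with $m^*$ and using the two restrictions of $m$ gives
\[
  \pi^*s^* = (s\circ\pi)^* = 0_\bV^*\,m^* = 1_\bV^*\,m^* = \id_\bV^* = \id_{\SHet(\bV)},
\]
while $s^*\pi^* = (\pi\circ s)^* = \id_{\SHet(\sX)}$. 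Hence $\pi^*$ is an equivalence of \inftyCats with inverse $s^*$, so its right adjoint $\pi_*$ is also an inverse of $\pi^*$, and the unit $\id\to\pi_*\pi^*$ is invertible — the assertion of the proposition.

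It remains to prove the $\A^1$-invariance input in the required form: for an arbitrary \dA stack $\sZ$, the projection $\pr : \A^1_\sZ\to\sZ$ induces an equivalence on $\SHet$. I would prove this by induction on the Artin level of $\sZ$. The base case, $\sZ$ a \da space, is \thmref{thm:sixops/asp/main}\itemref{item:sixops/asp/homotopy} applied to the trivial line bundle $\A^1_\sZ = \bV_\sZ(\sO_\sZ)$, in its standard equivalence form (the zero section being an $\A^1$-homotopy equivalence). If $\sZ$ is $(k+1)$-Artin, choose a smooth atlas $Z_0\to\sZ$ with $Z_0$ a \da space; then $\A^1_{Z_0}\to\A^1_\sZ$ is a smooth atlas, $\Cech(\A^1_{Z_0}/\A^1_\sZ)_\bullet = \A^1_{\Cech(Z_0/\sZ)_\bullet}$ levelwise, and \eqref{eq:SHet totalization} presents $\SHet(\A^1_\sZ)$ as $\Tot_{[n]\in\bDelta}\SHet(\A^1_{\Cech(Z_0/\sZ)_n})$ compatibly with $\pr^*$ and with $\SHet(\sZ) = \Tot_{[n]}\SHet(\Cech(Z_0/\sZ)_n)$. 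Each $\Cech(Z_0/\sZ)_n$ is $k$-Artin, so the inductive hypothesis makes $\pr^*$ a levelwise equivalence, hence an equivalence of totalizations.

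The main obstacle is this inductive bootstrap — passing from the "the unit is invertible" form of homotopy invariance over \da spaces to the "$\pr^*$ is an equivalence" form over all \dA stacks — and its genuinely delicate point is homotopy coherence: the equivalences supplied by the inductive hypothesis must be natural in $[n]\in\bDelta$ so as to assemble into an equivalence of cosimplicial \inftyCats and not merely a levelwise one, which one arranges by organizing the construction as a morphism of \v{C}ech nerves of smooth atlases and invoking that $\Tot$ preserves levelwise equivalences. The remaining verifications are routine: that $m$ is a morphism of \dA stacks with the stated restrictions (immediate from the $\sO$-linear structure on the functor of points of $\bV$), and that $\pi$ admits the zero section $s$ with $\pi\circ s = \id$.
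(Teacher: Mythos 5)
There is a genuine error at the heart of your argument, and it is not the homotopy-coherence issue you flag at the end but the much earlier step ``$0_\bV^* = 1_\bV^*$.''

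Homotopy invariance for $\SHet$ — both the schematic statement \thmref{thm:sixops/asp/main}\itemref{item:sixops/asp/homotopy} and the stack-level statement being proved — asserts only that the unit $\id \to \pr_*\pr^*$ is invertible, i.e.\ that $\pr^* : \SHet(\sZ) \to \SHet(\A^1_\sZ)$ is \emph{fully faithful}. It does not assert, and it is false, that $\pr^*$ is an \emph{equivalence}. Already for $\sZ = \Spec k$ a point, $\SHet(\A^1_k)$ is strictly larger than $\SHet(k)$: the localization triangle for $\{0\} \hookrightarrow \A^1 \hookleftarrow \bG_m$ produces objects (e.g.\ $j_!\un_{\bG_m}$) that are visibly not pulled back from $k$. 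So there is no ``standard equivalence form'' of the base case, and the inductive bootstrap you describe would be propagating a false statement from a false base case. Consequently, two sections $0_\bV, 1_\bV$ of $\pr$ give retractions $0_\bV^*, 1_\bV^* $ of $\pr^*$ which agree only on the essential image of $\pr^*$, not on all of $\SHet(\A^1_\bV)$, and there is no reason for $m^*$ to land in that image. The identity $\pi^*s^* = 0_\bV^* m^* = 1_\bV^* m^* = \id$ therefore fails; and indeed it must fail, since $\pi^*$ is not an equivalence even when $\pi$ is an honest vector bundle projection (the case $k=-1$ of the proposition). In short, you are attempting to prove a statement strictly stronger than the proposition — that $\pi^*$ is an equivalence with inverse $s^*$ — and that stronger statement is false.

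The paper takes a different and more delicate route precisely because only fully faithfulness can hold. It first treats the honest vector bundle case ($\sE$ of Tor-amplitude $[1,1]$) by étale descent from the schematic case, where the claim is part of the construction of $\SH$. For $\sE$ locally free (Tor-amplitude $[0,0]$), so that $\pi$ is a classifying-stack projection, it observes that the canonical section $\sigma : \sX \to \bV_\sX(\sE[-1])$ is a smooth \emph{surjection}; this lets one check invertibility of $\id \to \sigma_*\sigma^*$ after applying $\sigma^!$, and the self-intersection base change square for $\sigma$ reduces this to the vector bundle case already treated. Iterating handles Tor-amplitude $[-k,-k]$, and a further induction on $k$ (choosing a local surjection $\sE_0[-k] \twoheadrightarrow \sE$ with $\sE_0$ locally free) handles the general amplitude $[-k,1]$. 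Nowhere does this argument claim more than that the unit is invertible, which is exactly what survives the descent and retraction manipulations. If you want to salvage your approach, you would need to replace the $\A^1$-contraction by an argument that only produces fully faithfulness — but as written, the contraction argument proves too much and therefore proves nothing.
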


\begin{proof}
First assume that $\sE$ is of Tor-amplitude $[1,1]$, so that $\pi$ is a vector bundle.
By descent we may assume that $\sX$ is schematic, in which case the claim holds almost by construction (see \cite[Chap.~2, Subsect.~3.2]{KhanThesis}).

If $\sE$ is of Tor-amplitude $[0,0]$, then $\pi$ is the projection of the classifying stack of the vector bundle $\bV_\sX(\sE) \to \sX$, and the canonical section $\sigma : \sX \to \bV_\sX(\sE[-1])$ is a smooth surjection.
The composite of the two unit maps $\id \to \pi_*\pi^* \to \pi_*\sigma_*\sigma^*\pi^* = \id$ is the identity, so will suffice to show that the unit $\id \to \sigma_*\sigma^*$ is invertible.
Since $\sigma$ is a smooth surjection it suffices moreover to show that $\sigma^! \to \sigma^!\sigma_*\sigma^*$ is invertible.
By the base change formula for the square
  \begin{equation*}
    \begin{tikzcd}
      \bV_\sX(\sE) \ar{r}{p}\ar{d}{p}
        & \sX \ar{d}{\sigma}
      \\
      \sX \ar{r}{\sigma}
        & \bV_\sX(\sE[-1]),
    \end{tikzcd}
  \end{equation*}
we reduce to showing that the unit map $\id \to p_*p^*$ is invertible.
This holds by the Tor-amplitude $[1,1]$ case already proven above.
Repeating the same argument inductively shows the case of Tor-amplitude $[-k,-k]$ for all $k\ge 0$.

For the general case of Tor-amplitude $[-k,1]$, we argue by induction on $k$ to reduce to the $k=-1$ case above.
The question being local on $\sX$, we may find a surjection $\sE_0[-k] \to \sE$ with $\sE_0$ locally free.
If $\sE'$ is the fibre of this map, then $\sE'[1]$ is then of Tor-amplitude $[-(k-1),1]$, so by indutive assumption we know that the claim holds for $\pi' : \bV_\sX(\sE') \to \sX$ (i.e., that $\id \to (\pi')_*(\pi')^*$ is invertible).
There is a commutative diagram
  \begin{equation*}
    \begin{tikzcd}
      \sX \ar{r}{\sigma'} \ar[swap]{rd}{\sigma}
        & \bV_\sX(\sE') \ar{r}{\pi'}\ar{d}{\tau}
        & \sX \ar{d}{\sigma_0}
      \\
        & \bV_\sX(\sE[-1]) \ar{r}{\rho}\ar[swap]{rd}{\pi}
        & \bV_\sX(\sE_0[-k-1]) \ar{d}{\pi_0}
      \\
        &
        & \sX,
    \end{tikzcd}
  \end{equation*}
where the square is cartesian.
As $\sE_0[-k-1]$ is of Tor-amplitude $[-k-1,-k-1]$, we already know that the unit $\id \to (\pi_0)_*(\pi_0)^*$ is invertible by above.
It remains to show that $\id \to \rho_*\rho^*$ is invertible, which can be done after applying $\sigma_0^!$ on the left.
By the base change formula this follows from invertibility of the unit $\id \to (\pi')_*(\pi')^*$.
\end{proof}

The canonical map \eqref{eq:K->PicSH} of \thmref{thm:sixops/asp/main}\ref{item:sixops/asp/Thom} also extends to the site of \dA stacks:
  \begin{equation}\label{eq:K->PicSH on stacks}
    \K(-) \to \Aut(\SHet(-)).
  \end{equation}
Indeed as the target satisfies étale descent, the map factors through étale K-theory $\K_\et$ and arises via right Kan extension from \da spaces.
We thus also have the (invertible) operations
  \begin{equation}\label{eq:Sigma^E}
    \Sigma^{\sE} : \SH(\sX) \to \SH(\sX)
  \end{equation}
for $\sE \in \Perf(\sX)$.

\begin{thm}[Purity]\label{thm:sixops/stacks/purity}
Let $f : \sX \to \sY$ be a smooth morphism of \dA stacks.
Then there is a purity equivalence
  \begin{equation*}
    \pur_f : \Sigma^{\sL_{\sX/\sY}}f^* = f^!
  \end{equation*}
which is natural in $f$.
\end{thm}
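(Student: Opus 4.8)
The plan is to deduce purity on \dA stacks by descent from \da spaces, where the equivalence $\pur_f : \Sigma^{\sL_{X/Y}}f^* = f^!$ for smooth $f$ is already available (\thmref{thm:sixops/asp/main}\itemref{item:sixops/asp/purity}). The key point is that the operation $f^!$ on \dA stacks was assembled from its restriction to \da spaces by a homotopy-coherent right Kan extension (via the Gaitsgory--Rozenblyum or Liu--Zheng formalism), and on \da spaces the purity equivalence for smooth morphisms is itself a piece of this coherent datum, natural in $f$ and compatible with composition. Right Kan-extending it together with the rest produces a candidate natural transformation $\pur_f : \Sigma^{\sL_{\sX/\sY}}f^* \to f^!$ of functors $\SHet(\sY) \to \SHet(\sX)$ for every smooth $f$ of \dA stacks; the remaining task is to check it is an equivalence, which may be tested on a smooth atlas.

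Concretely, I would fix $\sF \in \SHet(\sY)$, choose a smooth atlas $v : Y \to \sY$ with $Y$ a \da space, form $\sX_Y := \sX \fibprodR_\sY Y$ (smooth over $Y$), and pick a smooth atlas $X \to \sX_Y$, obtaining an induced smooth atlas $u : X \to \sX$ and a smooth morphism $f_0 : X \to Y$ of \da spaces fitting into a square over $f$ with vertical maps $u$ and $v$. The defining property \eqref{eq:f^! commutes with u^L*} of $f^!$ on stacks gives
  \begin{equation*}
    \Sigma^{\sL_{X/\sX}} u^* f^!(\sF) = f_0^!\bigl(\Sigma^{\sL_{Y/\sY}} v^*\sF\bigr),
  \end{equation*}
and space-level purity for $f_0$ rewrites the right-hand side as $\Sigma^{\sL_{X/Y}}\Sigma^{f_0^*\sL_{Y/\sY}} u^* f^*(\sF)$, using $f_0^*v^* = u^*f^*$. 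Now the transitivity triangle for $X \to Y \to \sY$ identifies $\Sigma^{\sL_{X/Y}}\Sigma^{f_0^*\sL_{Y/\sY}}$ with $\Sigma^{\sL_{X/\sY}}$, and the transitivity triangle for $X \to \sX \to \sY$ identifies $\Sigma^{\sL_{X/\sY}}$ with $\Sigma^{\sL_{X/\sX}}\Sigma^{u^*\sL_{\sX/\sY}}$ --- these being equalities of classes in $\K(X)$, hence of the induced autoequivalences through \eqref{eq:K->PicSH on stacks}. Cancelling the invertible operation $\Sigma^{\sL_{X/\sX}}$ and commuting $\Sigma^{u^*\sL_{\sX/\sY}} u^* = u^*\Sigma^{\sL_{\sX/\sY}}$ leaves $u^* f^!(\sF) \simeq u^*\Sigma^{\sL_{\sX/\sY}} f^*(\sF)$, compatibly with the candidate transformation; since the family $\{u^*\}_{u \in \Lis_\sX}$ is jointly conservative, $\pur_f$ is an equivalence, and naturality in $\sF$ and in $f$ is inherited from the pieces of data used.

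The main obstacle I anticipate is not this computation but the homotopy-coherent bookkeeping of the first paragraph: making precise that $\pur_{(-)}$ on \da spaces is part of the descent datum that survives right Kan extension, and that the extended transformation restricts on an atlas to exactly the map computed in the second paragraph. I would handle this as in the original construction of the six operations on stacks, by identifying --- on the subsite of smooth morphisms of \da spaces --- the presheaf $X \mapsto \SHet(X)$ with transition functors $f \mapsto f^!$ with the one with transition functors $f \mapsto \Sigma^{\sL_{X/Y}}f^*$ via $\pur$, then applying the right Kan extension to \dA stacks and unwinding it through the equivalences $\Theta_\sX$ that define $f^!$ stackily.
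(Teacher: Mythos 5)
Your argument is exactly what the paper's one-line proof ("follows immediately from the characterization of $f^!$") leaves to the reader: you invoke the defining property \eqref{eq:f^! commutes with u^L*}, apply purity for smooth morphisms of \da spaces (\thmref{thm:sixops/asp/main}\itemref{item:sixops/asp/purity}), cancel via the two transitivity triangles through the $\K$-theory map \eqref{eq:K->PicSH on stacks}, and conclude by conservativity of $\{u^*\}_{u\in\Lis_\sX}$. This is the same approach as the paper; the coherence concern you flag at the end is precisely what the paper's construction via the equivalences $\Theta_\sX$ is designed to absorb, so you have correctly located the only nontrivial bookkeeping.
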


\begin{proof}
This follows immediately from the characterization of $f^!$ given in the proof of \thmref{thm:sixops/stacks/main}.
\end{proof}

\begin{exam}\label{exam:suspension}
Let $\sE$ be a perfect complex of Tor-amplitude $[-k, 1]$, $k\ge -1$, on a \dA stack $\sX$.
Then $\bV_\sX(\sE[-1])$ is a smooth Artin stack over $\sX$.
Let $\pi : \bV_\sX(\sE[-1]) \to \sX$ denote the projection and $\sigma : \sX \to \bV_\sX(\sE[-1])$ the canonical section.
By purity (\thmref{thm:sixops/stacks/purity}) one has the formulas
  \begin{equation*}
    \Sigma^{\sE} = \sigma^!\pi^*,
    \qquad
    \Sigma^{-\sE} = \sigma^*\pi^!.
  \end{equation*}
Similarly if $\sE$ is of Tor-amplitude $[0,0]$ (= locally free), then
  \begin{equation*}
    \Sigma^{\sE} = s^*p^!,
    \qquad
    \Sigma^{\sE[1]} = s^!p^*,
  \end{equation*}
where $p : \bV_\sX(\sE) \to \sX$ and $s : \sX \to \bV_\sX(\sE)$ denote the projection and zero section, respectively.
\end{exam}

\begin{cor}\label{cor:sixops/exchange}
Suppose given a commutative square
  \begin{equation*}
    \begin{tikzcd}
      \sX' \ar{r}{f'}\ar{d}{p}
        & \sY' \ar{d}{q}
      \\
      \sX \ar{r}{f}
        & \sY
    \end{tikzcd}
  \end{equation*}
of \dA stacks which is cartesian on underlying classical stacks.
If $f$ is representable and locally of finite type, there is a natural transformation
  \begin{equation*}
    \mrm{Ex}^{*!} : p^*f^! \to (f')^!q^*.
  \end{equation*}
If either $f$ or $q$ is smooth, then $\mrm{Ex}^{*!}$ is invertible.
\end{cor}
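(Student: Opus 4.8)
The plan is to define $\mathrm{Ex}^{*!}$ as the mate of the base change isomorphism for $f_!$ and then to deduce its invertibility in the two smooth cases from the purity equivalence of \thmref{thm:sixops/stacks/purity}.

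First I would construct the transformation. Since the square is cartesian on underlying classical stacks, \thmref{thm:sixops/stacks/main}\itemref{item:six operations for Artin stacks/f_!} (together with its footnote, which rests on \thmref{thm:sixops/stacks/localization}) supplies a base change isomorphism $\mathrm{Ex}^*_!\colon (f')_!p^*\xrightarrow{\ \sim\ }q^*f_!$. Then $\mathrm{Ex}^{*!}$ is defined to be its mate under the adjunctions $f_!\dashv f^!$ and $(f')_!\dashv (f')^!$, i.e. the composite
\begin{equation*}
  p^*f^!
    \xrightarrow{\ \mathrm{unit}\ }(f')^!(f')_!p^*f^!
    \xrightarrow{\ \mathrm{Ex}^*_!\ }(f')^!q^*f_!f^!
    \xrightarrow{\ \mathrm{counit}\ }(f')^!q^*,
\end{equation*}
where the unit is that of $(f')_!\dashv (f')^!$ and the counit that of $f_!\dashv f^!$; naturality in the square follows from that of the units, counits, and of $\mathrm{Ex}^*_!$.

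For invertibility I would, in either smooth case, first use nil-invariance of $\SHet$ — a consequence of \thmref{thm:sixops/stacks/localization} — to replace $\sX'$ by the derived fibre product $\sX\fibprodR_\sY\sY'$, and thereby assume the square genuinely cartesian; then $\sL_{\sX'/\sX}\simeq (f')^*\sL_{\sY'/\sY}$ and $\sL_{\sX'/\sY'}\simeq p^*\sL_{\sX/\sY}$ by stability of the cotangent complex under derived base change. If $q$ is smooth then so is $p$, and purity gives $q^*\simeq\Sigma^{-\sL_{\sY'/\sY}}q^!$ and $p^*\simeq\Sigma^{-\sL_{\sX'/\sX}}p^!$; substituting these, using that the Thom twists commute with $(-)^!$ up to the evident pullback of the twisting complex, together with the $2$-functoriality relation $p^!f^!\simeq(fp)^!=(qf')^!\simeq(f')^!q^!$, identifies both $p^*f^!$ and $(f')^!q^*$ with $\Sigma^{-\sL_{\sX'/\sX}}(f')^!q^!$. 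If instead $f$ is smooth then so is $f'$, and purity turns $p^*f^!$ into $\Sigma^{p^*\sL_{\sX/\sY}}p^*f^*\simeq\Sigma^{\sL_{\sX'/\sY'}}(f')^*q^*\simeq(f')^!q^*$.

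The main obstacle is the coherence point implicit in the last two sentences: that the isomorphism assembled from purity really is the categorically defined mate $\mathrm{Ex}^{*!}$. For derived algebraic spaces this compatibility is part of the data glued through the Gaitsgory--Rozenblyum or Liu--Zheng formalism recalled in \ssecref{ssec:sixops/asp}, and it propagates to \dA stacks via the right Kan extension defining $\SHet$: since $\{u^*\}_{u\in\Lis_{\sX'}}$ is conservative and all six operations together with their exchange transformations are pinned down by their restriction to smooth schematic atlases through \eqref{eq:f^! commutes with u^L*}, it suffices to check the compatibility for derived algebraic spaces, where it is known. Making this last reduction precise is the one delicate point; the rest is formal.
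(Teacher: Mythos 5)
Your proposal is correct and follows essentially the same route as the paper: $\mrm{Ex}^{*!}$ is defined as the mate of the base-change isomorphism $\mrm{Ex}^*_!$ (you take the mate via the $(f_!,f^!)$ and $((f')_!,(f')^!)$ adjunctions, whereas the paper first passes to right adjoints to obtain $f^!q_*\simeq p_*(f')^!$ and then uses the unit of $q^*\dashv q_*$ and counit of $p^*\dashv p_*$, but by the calculus of mates these two constructions give the same transformation), and invertibility in the two smooth cases is deduced from \thmref{thm:sixops/stacks/purity}. Your more detailed treatment of invertibility, including the coherence point you explicitly flag (that the isomorphism assembled from purity is indeed $\mrm{Ex}^{*!}$ and checked on smooth schematic atlases via \eqref{eq:f^! commutes with u^L*}), is exactly what the paper leaves implicit in its one-line appeal to purity.
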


\begin{proof}
The natural transformation is defined as the composite
  \begin{equation*}
    p^* f^!
      \xrightarrow{\mrm{unit}} p^* f^! q_* q^*
      \simeq p^* p_* (f')^! q^*
      \xrightarrow{\mrm{counit}} (f')^! q^*
  \end{equation*}
where the isomorphism in the middle is the base change formula, obtained by passage to right adjoints from the base change formula (\thmref{thm:sixops/stacks/main}\itemref{item:six operations for Artin stacks/f_!}).
The second statement follows from \thmref{thm:sixops/stacks/purity}.
\end{proof}

\begin{constr}[Euler transformation]\label{constr:eul}
Let $\sE$ be a locally free sheaf on a \dA stack $\sX$.
There is a natural transformation
  \begin{equation}
    \eul_\sE : \id \to \Sigma^{\sE}
  \end{equation}
of auto-equivalences of $\SHet(\sX)$.
More generally for any surjection $\phi : \sE \to \sE'$ of finite locally free sheaves, there is a natural transformation
  \begin{equation*}
    \Sigma^\phi : \Sigma^\sE \to \Sigma^{\sE'}
  \end{equation*}
constructed as follows.
Consider the commutative triangle
  \begin{equation*}
    \begin{tikzcd}
      \bV_\sX(\sE') \ar{rr}{i}\ar[swap]{rd}{q}
        &
        & \bV_\sX(\sE)\ar{ld}{p}
      \\
        & \sX
    \end{tikzcd}
  \end{equation*}
and let $t$ and $s$ be the respective zero sections.
Then $\Sigma^\phi$ is the composite
  \begin{equation*}
    t^*q^!
      = t^*i^!p^!
      \xrightarrow{\mrm{Ex}^{*!}} t^*i^*p^!
      = s^*p^!
  \end{equation*}
under the identifications $s^*p^! = \Sigma^\sE$ and $t^*q^! = \Sigma^{\sE'}$ (\examref{exam:suspension}), where $\mrm{Ex}^{*!} : i^! \to i^*$ is the exchange transformation (\corref{cor:sixops/exchange}) for the self-intersection square of the closed immersion $i$.
\end{constr}

%%%%%%%%%%%%%%%%%%%%%%%%%%%%%%%%%%%%%%%%%%%%%%%%%%%%%%%%%%%%%%%%%%%%%%%%%%%

\bibliographystyle{halphanum}

\begin{thebibliography}{EHKSY}

\bibitem[BF]{BehrendFantechi} K.~Behrend, B.~Fantechi, \textit{The intrinsic normal cone}, Invent.~Math.~{\bf{128}} (1997), no. 1, 45--88.

\bibitem[BH]{BachmannHoyois} T.~Bachmann, M.~Hoyois, \textit{Norms in motivic homotopy theory}, \arXiv{1711.03061} (2017).

\bibitem[Bh]{Bhatt} B.~Bhatt, \textit{Completions and derived de Rham cohomology}. \arXiv{1207.6193} (2012).

\bibitem[CD1]{CisinskiDegliseBook} D.-C.~Cisinski, F.~Déglise, \textit{Triangulated categories of mixed motives}. Springer~Monogr.~Math., to appear. \arXiv{0912.2110} (2012).

\bibitem[CD2]{CisinskiDegliseEtale} D.-C.~Cisinski, F.~Déglise, \textit{Étale motives}. Compos.~Math.~{\bf{152}} (2016), no. 3, 556--666.

\bibitem[CK]{CiocanFontanineKapranov} I.~Ciocan-Fontanine, M.~Kapranov, \textit{Virtual fundamental classes via dg-manifolds}. Geom.~Topol.~{\bf{13}} (2009), no. 3, 1779--1804.

\bibitem[CLO]{ConradLieblichOlsson} B.~Conrad, M.~Lieblich, M.~Olsson, \textit{Nagata compactification for algebraic spaces}. J.~Inst.~Math.~Jussieu~{\bf{11}} (2012), no. 4, 747--814.

\bibitem[Ci]{CisinskiHigher} D.-C.~Cisinski, \textit{Higher category theory and homotopical algebra}. Camb.~Stud.~Adv.~Math. {\bf{180}} (2019).

\bibitem[DJK]{DegliseJinKhan} F.~Déglise, F.~Jin, A.\,A.~Khan, \textit{Fundamental classes in motivic homotopy theory}. \arXiv{1805.05920} (2018).

\bibitem[De1]{DegliseOrientation} F.~Déglise, \textit{Orientation theory in arithmetic geometry}. $K$-Theory—Proceedings~of~the~International~Colloquium, Mumbai 2016 (2018), 239--247.

\bibitem[De2]{DegliseBivariant} F.~Déglise, \textit{Bivariant theories in motivic stable homotopy}, Doc. Math. {\bf{23}} (2018), 997--1076.

\bibitem[EHKSY]{EHKSY3} E.~Elmanto, M.~Hoyois, A.\,A.~Khan, V.~Sosnilo, M.~Yakerson, \textit{Modules over algebraic cobordism}. \arXiv{1908.02162} (2019).

\bibitem[EHKV]{EdidinHassettKreschVistoli} D.~Edidin, B.~Hassett, A.~Kresch, A.~Vistoli, \textit{Brauer groups and quotient stacks}. Amer.~J.~Math. {\bf{123}} (2001), no. 4, 761--777.

\bibitem[FM]{FultonMacPherson} W.~Fulton, R.~MacPherson, \textit{Categorical framework for the study of singular spaces.}. Mem.~Amer.~Math.~Soc.~{\bf{31}} (1981), no. 243.

\bibitem[Fu]{Fujiwara} K.~Fujiwara, \textit{A proof of the absolute purity conjecture (after Gabber)}. Algebraic geometry 2000, Azumino (Hotaka), 153--183, Adv.~Stud.~Pure~Math.~{\bf{36}} (2002).

\bibitem[GR]{GaitsgoryRozenblyum} D.~Gaitsgory, N.~Rozenblyum, \textit{A study in derived algebraic geometry. Vol. I. Correspondences and duality}. Math.~Surv.~Mono. {\bf{221}} (2017).

\bibitem[HS]{HirschowitzSimpson} A.~Hirschowitz, C.~Simpson, \textit{Descente pour les n-champs}. \arXiv{math.AG/9807049} (2001).

\bibitem[HTT]{LurieHTT} J.~Lurie, \textit{Higher topos theory}, Ann.~Math.~Stud. {\bf{170}} (2009).

\bibitem[ILO]{ILO} L.~Illusie, Y.~Laszlo, F.~Orgogozo (eds.), \textit{Travaux de Gabber sur l'uniformisation locale et la cohomologie étale des schémas quasi-excellents}, Séminaire à l'École Polytechnique 2006--2008. With the collaboration of F.~Déglise, A.~Moreau, V.~Pilloni, M.~Raynaud, J.~Riou, B.~Stroh, M.~Temkin, W.~Zheng. Astérisque {\bf{363–364}}, Soc. Math. France (2014).

\bibitem[Ji]{JinG} F.~Jin, \textit{Algebraic G-theory in motivic homotopy categories}. \arXiv{1806.03927} (2018).

\bibitem[Jo]{JoshuaHigher} R.~Joshua, \textit{Higher intersection theory on algebraic stacks. I}, $K$-Theory {\bf{27}} (2002), no. 2, 133--195.

\bibitem[KP]{KiemPark} Y.-H.~Kiem, H.~Park, \textit{Virtual intersection theories}. \arXiv{arXiv:1908.03340} (2019).

\bibitem[KR]{KhanRydh} A.\,A.~Khan, D.~Rydh, \textit{Virtual Cartier divisors and blow-ups}. \arXiv{1802.05702} (2018).

\bibitem[KV]{KapranovVasserot} M.~Kapranov, E.~Vasserot, \textit{The cohomological Hall algebra of a surface and factorization cohomology}, \arXiv{arXiv:1901.07641} (2019).

\bibitem[Kh1]{KhanThesis} A.\,A.~Khan, \textit{Motivic homotopy theory in derived algebraic geometry}, Ph.D.~thesis (2016). \href{https://www.preschema.com/thesis/}{https://www.preschema.com/thesis/}.

\bibitem[Kh2]{KhanLocalization} A.\,A.~Khan, \textit{The Morel--Voevodsky localization theorem in spectral algebraic geometry}, \textit{Geom. Topol}, to appear. \arXiv{1610.06871} (2016).

\bibitem[Kh3]{KhanKblow} A.\,A.~Khan, \textit{Descent by quasi-smooth blow-ups in algebraic K-theory}. \arXiv{1810.12858} (2018).

\bibitem[Ko]{Kontsevich} M.~Kontsevich, \textit{Enumeration of rational curves via torus actions}, The moduli space of curves (Texel Island, 1994), 335--368, Progr. Math., {\bf{129}} (1995).

\bibitem[Kr]{Kresch} A.~Kresch, \textit{Cycle groups for Artin stacks}, Invent.~Math.~{\bf{138}} (1999), no. 3, 495--536.

\bibitem[LO1]{LaszloOlssonI} Y.~Laszlo, M.~Olsson, \textit{The six operations for sheaves on Artin stacks. I. Finite coefficients}. Publ.~Math.~Inst.~Hautes~Études~Sci, no. {\bf{107}} (2008), 109--168.

\bibitem[LO2]{LaszloOlssonII} Y.~Laszlo, M.~Olsson, \textit{The six operations for sheaves on Artin stacks. II. Adic coefficients}. Publ.~Math.~Inst.~Hautes~Études~Sci, no. {\bf{107}} (2008), 169--210.

\bibitem[LS]{LowreySchuerg} P.~Lowrey, T.~Schürg, \textit{Derived algebraic cobordism}. J.~Inst.~Math.~Jussieu~{\bf{15}} (2016), no. 2, 407--443.

\bibitem[LZ]{LiuZheng} Y.~Liu, W.~Zheng, \textit{Enhanced six operations and base change theorem for higher Artin stacks}. \arXiv{1211.5948} (2012).

\bibitem[La]{Laumon} G.~Laumon, \textit{Homologie~étale}, Exposé~VIII in Séminaire de géométrie analytique (Ecole Norm. Sup., Paris, 1974-75), Astérisque {\bf{36–37}}, 163--188. Soc. Math. France (1976).

\bibitem[Le1]{LevineMGL} M.~Levine, \textit{Comparison of cobordism theories}. J.~Algebra~{\bf{322}} (2009), no. 9, 3291--3317.

\bibitem[Le2]{LevineIntrinsic} M.~Levine, \textit{The intrinsic stable normal cone}. \arXiv{arXiv:1703.03056} (2017).

\bibitem[Lu]{LurieThesis} J.~Lurie, \textit{Derived algebraic geometry}, Ph.D.~thesis (2004). \href{http://www.math.harvard.edu/~lurie/papers/DAG.pdf}{http://www.math.harvard.edu/~lurie/papers/DAG.pdf}.

\bibitem[Ma]{Manolache} C.~Manolache, \textit{Virtual pull-backs.} J.~Algebraic Geom.~{\bf{21}} (2012), no. 2, 201--245.

\bibitem[NSØ]{NaumannSpitzweckOstvaer} N.~Naumann, M.~Spitzweck, P.\,A.~Østvær, \textit{Motivic Landweber exactness}. Doc.~Math.~{\bf{14}} (2009), 551--593.

\bibitem[Ol]{Olsson} M.~Olsson, \textit{Borel–Moore homology, Riemann–Roch transformations, and local terms}, Adv.~Math.~{\bf{273}} (2015), 56--123.

\bibitem[RS]{RicharzScholbach} T.~Richarz, J.~Scholbach, \textit{The intersection motive of the moduli stack of shtukas}. \arXiv{1901.04919} (2019).

\bibitem[Ri]{Riou} J.~Riou, \textit{Algebraic $K$-theory, ${\bf A}^1$-homotopy and Riemann-Roch theorems}, J.~Topol.~{\bf{3}} (2010), no. 2, 229--264.

\bibitem[Ro]{RobaloThesis} M.~Robalo, \textit{Théorie homotopique motivique des espaces non-commutatifs}, Ph.D.~thesis (2014). \href{https://webusers.imj-prg.fr/~marco.robalo/these.pdf}{https://webusers.imj-prg.fr/~marco.robalo/these.pdf}.

\bibitem[Ry]{RydhApprox} D.~Rydh, \textit{Noetherian approximation of algebraic spaces and stacks}. J.~Algebra~{\bf{422}} (2015), 105--147.

\bibitem[SAG]{SAG} J.~Lurie, \textit{Spectral algebraic geometry}.

% \bibitem[SGA4$\frac{1}{2}$]{SGA45} P.~Deligne, \textit{Cohomologie étale}. Séminaire de Géométrie Algébrique du {B}ois-{M}arie (SGA $4\frac{1}{2}$). Lecture Notes in Mathematics~{\bf{569}}, Springer (1977).

\bibitem[SGA5]{SGA5} L.~Illusie (ed.), \textit{Cohomologie $l$-adique et fonctions $L$}. Séminaire de Géometrie Algébrique du {B}ois-{M}arie 1965–1966 (SGA 5). Lecture Notes in Mathematics~{\bf{589}}, Springer (1977).

\bibitem[Sp]{Spitzweck} M.~Spitzweck, \textit{A commutative $\mathbb{P}^1$-spectrum representing motivic cohomology over Dedekind domains}. Mém.~Soc.~Math.~Fr.~{\bf{157}} (2018).

\bibitem[To1]{ToenLectures} B.~Toën, \textit{Simplicial presheaves and derived algebraic geometry}. Simplicial methods for operads and algebraic geometry, 119--186, Adv.~Courses~Math.~CRM~Barcelona (2010).

\bibitem[To2]{ToenSurvey} B.~Toën, \textit{Derived algebraic geometry}, EMS Surv. Math. Sci. 1 (2014), 153--240.

\end{thebibliography}

\bigskip \noindent {\href{mailto:adeel.khan@mathematik.uni-regensburg.de}{adeel.khan@mathematik.uni-regensburg.de}} \medskip

\noindent Fakultät für Mathematik\\
\noindent Universität Regensburg\\
\noindent 93040 Regensburg\\
\noindent Germany

\end{document}